 \newtheorem{defi}[theorem]{Definition}
 \newtheorem{exm}[theorem]{Example}
 \newcommand{\e}{\epsilon}
 \newcommand{\C} {\mathbb C}
 \def \C{\mathbb{C}}
 \def \sign {\mbox{sign}}
 \def \l {\delta}
 \def \e {\epsilon}
  \def\qed{\vbox{\hrule \hbox{\vrule\hbox to
 5pt{\vbox to 6pt{\vfil}\hfil}\vrule}\hrule}}
\begin{document}

 \bibliographystyle{alpha}
 \unitlength=1cm

\title[Expressing Combinatorial Problems by Polynomial Equations]{Expressing Combinatorial Optimization Problems by Systems of Polynomial Equations and the Nullstellensatz}
    \author[J.A.~De~Loera, J.~Lee, S.~Margulies, S.~Onn]{
        \spreadout{J.A.~DE~LOERA}$^1$%
         \thanks{Research supported in part by an IBM Open Collaborative Research Award
         and by NSF grant DMS-0608785}\ ,\quad
        \spreadout{J. LEE}{$^2$},\quad
        \spreadout{S.~MARGULIES}{$^{3\dag}$} and
        \spreadout{S.~ONN}{$^4$%
        \thanks{ Research supported by the ISF (Israel Science Foundation) and by the fund for the promotion of research at Technion}
        }
        \\
        \affilskip
        {$^1$} Department of Mathematics, Univ. of California, Davis, California, USA\\
        (email: {\tt deloera@math.ucdavis.edu})\\
        \affilskip
        {$^2$} IBM T.J. Watson Research Center, Yorktown Heights, New York, USA\\
        (email: {\tt jonlee@us.ibm.com})\\
        \affilskip
        {$^3$} Department of Computer Science, Univ. of California, Davis, California, USA\\
        (email: {\tt smargulies@ucdavis.edu})\\
        \affilskip
        {$^4$}Davidson Faculty of IE \& M, Technion - Israel Institute of Technology, Haifa, Israel\\
        (email: {\tt onn@ie.technion.ac.il})
}

 \maketitle

 \begin{abstract}
 Systems of polynomial equations over the complex or real numbers
 can be used to model combinatorial problems.
 In this way, a combinatorial problem is feasible (e.g. a graph is 3-colorable,
 hamiltonian, etc.) if and only if a related system of polynomial equations has
 a solution. In the first part of this paper, we construct new
 polynomial encodings for the problems of finding in a graph its
 longest cycle, the largest planar subgraph, the edge-chromatic
 number, or the largest $k$-colorable subgraph.

 For an infeasible polynomial system, the (complex) Hilbert
 Nullstellensatz gives a certificate that the associated combinatorial
 problem is
 infeasible. Thus, unless $\text{P}=\text{NP}$~, there must exist
 an infinite sequence of infeasible instances of each hard combinatorial
problem for which the minimum degree of a Hilbert Nullstellensatz
certificate of the associated polynomial system grows.

We show that the minimum-degree of a Nullstellensatz
certificate for the non-existence of
a stable set of size
greater than the stability number of the graph is the stability
number of the graph. Moreover, such a certificate contains at least
one term per stable set of $G$~. In contrast, for non-3-colorability, we
found only graphs with Nullstellensatz certificates of degree four.

\end{abstract}

\section{Introduction}

 N. Alon \cite{alonsurvey} used the term {\em ``polynomial
 method''} to refer to the use of non-linear polynomials for solving
 combinatorial problems. Although the polynomial method is not yet as
 widely used by combinatorists as, for instance, polyhedral or
 probabilistic techniques, the literature in this subject continues to
 grow. Prior work on encoding combinatorial properties included colorings  \cite{alontarsi,deloera,eliahou,hillarwindfeldt,lovasz1,matiyasevich1,matiyasevich2,Mnuk},
stable sets  \cite{deloera,lili,lovasz1,villa1},
matchings  \cite{fischer},
and flows \cite{alontarsi,Mnuk,Onn}. Non-linear encodings of
 combinatorial problems are often compact. This contrasts
 with the exponential sizes of systems of linear inequalities that
 describe the convex hull of incidence vectors of many combinatorial
 structures (see \cite{yannakakis}). In this article we present new
 encodings for other combinatorial problems, and we discuss
 applications of polynomial encodings to combinatorial
 optimization and to computational complexity.

 Recent work demonstrates that one can derive good semidefinite programming
 relaxations for combinatorial optimization problems from the
 encodings of these problems as polynomial systems (see
\cite{moniquefranz} and references therein for details).
Lasserre \cite{lasserre2}, Laurent \cite{monique07} and Parrilo \cite{parrilo1,
   parrilo2} studied the problem of minimizing a general polynomial
 function $f(x)$ over an algebraic variety having only finitely many
 solutions. Laurent proved that when the variety consists of the
 solutions of a zero-dimensional radical ideal $I$~, there is a way to
 set up the optimization problem $\min \{f(x) : x \in \text{variety}(I)\}$ as a
 finite sequence of semidefinite programs terminating with the optimal
 solution (see \cite{monique07}).

 This immediately suggests an application of the polynomial method to
 combinatorial optimization problems: Encode your problem with
 polynomials equations in ${\mathbb R} [x_1,\dots,x_n]$ that generate
 a zero-dimensional (variety is finite) radical ideal, then generate
 the finite sequence of SDPs following the method in \cite{monique07}.
 This highlights the importance of finding systems of polynomials for
 various combinatorial optimization problems.  The first half of this paper
 proposes new polynomial system encodings for the problems, with
 respect to an input graph, of finding a longest cycle, a largest planar
 subgraph, a largest $k$-colorable subgraph, or a minimum edge
coloring. In particular, we establish the following result.

 \begin{theorem}
\label{encodingthm}

 \begin{enumerate}
 \renewcommand{\theenumi}{\arabic{enumi}.}

 \item A simple graph $G$ with nodes $1,\dots,n$ has a cycle of length $L$ if
   and only if the following zero-dimensional system of polynomial
   equations has a solution:

 \begin{equation} \label{sizecycle}
 \sum_{i=1}^n y_i=L~.
 \end{equation}

 For every node $i=1, \ldots ,n$:
 \begin{equation}\label{choosevertex}
 y_i(y_i-1)=0,  \quad \quad
\prod_{s = 1}^n(x_i - s) = 0~,
 \end{equation}

 \begin{equation} \label{cyclicity}
 y_i \prod_{j \in \hbox{Adj(i)}}(x_i -y_j x_j + y_j)(x_i - y_j x_j -y_j(L-1)) = 0~.
 \end{equation}

Here $Adj(i)$ denotes the set of nodes adjacent to node $i$~.

 \item Let $G$ be a simple graph with $n$ nodes and $m$ edges. $G$ has
   a planar subgraph with $K$ edges if and only if the following zero-dimensional system
   of equations has a solution:

 For every edge $\{i,j\} \in E(G)$:
 \begin{equation}\label{whichedges}
 z_{\{ij\}}^2-z_{\{ij\}}=0, \ \sum_{\{i,j\} \in E(G)} z_{\{ij\}}-K=0~.
 \end{equation}

 For $k=1,2,3$~, every node $i \in V(G)$ and every edge $\{i,j\} \in E(G)$:
\begin{equation} \label{linearext1}
 \prod_{s = 1}^{n+m}(x_{\{i\}k} - s) = 0, \qquad  \prod_{s = 1}^{n+m}(y_{\{ij\}k} - s) = 0~,
\end{equation}

\begin{equation}
s_k \left(\prod_{\stackrel{i,j \in V(G)}{i < j}} \big(x_{\{i\}k}-x_{\{j\}k}\big) \prod_{\stackrel{i \in V(G),}{\{u,v\} \in E(G)}} \big(x_{\{i\}k}-y_{\{uv\}k} \big) \prod_{\{i,j\}, \{u,v\} \in E(G)} \big(y_{\{ij\}k}-y_{\{uv\}k} \big)\right)=1~.
\end{equation}

 For $k=1,2,3$~, and for every pair of a node $i \in V(G)$ and incident edge $\{i,j\} \in E(G)$:
 \begin{equation}\label{linearext2}
 z_{\{ij\}}\big(y_{\{ij\}k}-x_{\{i\}k}-\Delta_{\{ij,i\}k} \big)=0~.
 \end{equation}
 \vskip10pt

For every pair of a node $i \in V(G)$ and edge $\{u,v\} \in E(G)$ that is not incident on $i$:
 \begin{equation} \label{nomore1}
z_{\{uv\}}\big(y_{\{uv\}1}-x_{\{i\}1}-\Delta_{\{uv,i\}1} \big)\big(y_{\{uv\}2}-x_{\{i\}2}-\Delta_{\{uv,i\}2} \big)\big(y_{\{uv\}3}-x_{\{i\}3}-\Delta_{\{uv,i\}3} \big)=0~,
 \end{equation}
 \begin{equation} \label{nomore1a}
z_{\{uv\}}\big(x_{\{i\}1} - y_{\{uv\}1} - \Delta_{\{i,uv\}1} \big)\big( x_{\{i\}2} - y_{\{uv\}2} -\Delta_{\{i,uv\}2} \big) \big( x_{\{i\}3} - y_{\{uv\}3}-\Delta_{\{i,uv\}3} \big)=0~.
 \end{equation}
  \vskip10pt

For every pair of edges $\{i,j\}, \{u,v\} \in E(G)$ (regardless of whether or not they share an endpoint):
\begin{equation} \label{nomore2}
z_{\{ij\}}z_{\{uv\}}\big(y_{\{ij\}1}-y_{\{uv\}1}-\Delta_{\{ij,uv\}1} \big)\big(y_{\{ij\}2}-y_{\{uv\}2}-\Delta_{\{ij,uv\}2} \big) \big(y_{\{ij\}3}-y_{\{uv\}3}-\Delta_{\{ij,uv\}3} \big)=0~,
 \end{equation}
\begin{equation} \label{nomore2a}
z_{\{ij\}}z_{\{uv\}}\big(y_{\{uv\}1}-y_{\{ij\}1}-\Delta_{\{uv,ij\}1} \big)\big(y_{\{uv\}2}-y_{\{ij\}2}-\Delta_{\{uv,ij\}2} \big) \big(y_{\{uv\}3}-y_{\{ij\}3}-\Delta_{\{uv,ij\}3} \big)=0~.
 \end{equation}
   \vskip10pt

For every pair of nodes $i,j \in V(G)$~, (regardless of whether or not they are adjacent):
\begin{equation}
\big(x_{\{i\}1}-x_{\{j\}1}- \Delta_{\{i,j\}1} \big) \big(x_{\{i\}2}-x_{\{j\}2}-\Delta_{\{i,j\}2}\big) \big(x_{\{i\}3}-x_{\{j\}3}- \Delta_{\{i,j\}3} \big)=0~,
\end{equation}
\begin{equation}
\big(x_{\{j\}1}-x_{\{i\}1}- \Delta_{\{j,i\}1} \big) \big(x_{\{j\}2}-x_{\{i\}2}-\Delta_{\{j,i\}2}\big) \big(x_{\{j\}3}-x_{\{i\}3}- \Delta_{\{j,i\}3} \big)=0~.
\end{equation}
  \vskip10pt

For every $\Delta_{\text{index}}$ (e.g., $\Delta_{\{ij,uv\}k},\Delta_{\{ij,i\}k}$~, etc.) variable appearing in the above system:
\begin{equation} \label{delta_constraint}
\prod_{d=1}^{n+m -1}\big(\Delta_{\text{index}} - d \big)=0~.
\end{equation}

\item A graph $G$ has a $k$-colorable subgraph with $R$ edges if and only if the following zero-dimensional system of equations has a solution:
\begin{equation}
\sum_{\{i,j\} \in E(G)} y_{ij} -R=0~.
\end{equation}

For every vertex $i \in V(G)$:
\begin{equation}
 x_i^k=1~.
\end{equation}

For every edge $\{i,j\} \in E(G)$:
\begin{equation}
y_{ij}^2-y_{ij}=0, \quad y_{ij}\big( x_i^{k-1}+x_i^{k-2}x_j+\dots+ x_j^{k-1}\big)=0~.
\end{equation}

\item Let $G$ be a simple graph with maximum vertex degree $\Delta$~. The graph $G$ has edge-chromatic number $\Delta$ if
and only if the following zero-dimensional system of polynomials has a solution:

For every edge $\{i,j\} \in E(G)$:
\begin{equation} \label{1_1_4_colors}
x_{ij}^{\Delta}= 1~.
\end{equation}

For every node $i \in V(G)$:
\begin{equation} \label{1_1_4_cyccity}
s_i\left(\prod_{\stackrel{j,k \in Adj(i)}{j < k}}(x_{ij} -x_{ik})\right) = 1~, \end{equation}
where $Adj(i)$ is the set of nodes adjacent to node $i$~.

\noindent[By Vizing's theorem, if the system has no solution, then $G$ has edge-chromatic number $\Delta+1$~.]

 \end{enumerate}
 \end{theorem}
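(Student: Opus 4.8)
\emph{Overall structure and zero-dimensionality.}
The plan is to prove the four equivalences one at a time; items 1, 3 and 4 follow a simple, largely mechanical pattern, while item 2 is the substantive one and reduces to Schnyder's theorem on poset dimension. In every case I first check that the listed polynomials cut out a zero-dimensional variety. In items 1, 3, 4 each ``combinatorial'' variable is a root of a univariate polynomial already in the list ($y_i(y_i-1)$, $\prod_s(x_i-s)$, $x_i^k-1$, $z_{ij}^2-z_{ij}$, $x_{ij}^\Delta-1$), hence takes only finitely many values, and each remaining ``inverse'' variable $s_k,s_i$ is then determined on the variety as $1/(\text{polynomial in the bounded variables})$, leaving finitely many possibilities; in item 2 the variables $z,x,y,\Delta$ are explicitly bounded by \eqref{whichedges}, \eqref{linearext1}, \eqref{delta_constraint}, and the $s_k$ are again inverses. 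So in every case the solution set is finite and the ideal zero-dimensional.

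\emph{Items 1, 3, 4.}
For item 3 the identity $\sum_{t=0}^{k-1}x_i^{k-1-t}x_j^{t}=(x_i^k-x_j^k)/(x_i-x_j)$ (when $x_i\ne x_j$) shows that for $k$-th roots of unity $x_i,x_j$ this sum is $0$ exactly when $x_i\ne x_j$ and equals $kx_i^{k-1}\ne 0$ when $x_i=x_j$; so $y_{ij}\in\{0,1\}$ with $y_{ij}=1$ forcing $x_i\ne x_j$, and a solution is precisely a $k$-colouring of $V(G)$ together with a choice of $R$ bichromatic edges, which is the same as a $k$-colourable subgraph with $R$ edges (extend any colouring of the subgraph to $G$). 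Item 4 is similar but simpler: \eqref{1_1_4_colors} assigns one of $\Delta$ colours to each edge, \eqref{1_1_4_cyccity} is solvable iff the edges at every vertex are pairwise distinctly coloured, so a solution exists iff $\chi'(G)=\Delta$ (recall $\chi'(G)\ge\Delta$ always), Vizing's theorem giving the bracketed consequence. Item 1 carries real combinatorial content: $y_i\in\{0,1\}$ selects $L$ vertices, $x_i\in\{1,\dots,n\}$ is a ``position,'' and \eqref{cyclicity} forces each selected vertex $i$ to have a \emph{selected} neighbour $j$ with $x_j\in\{x_i+1,\ x_i-(L-1)\}$ (an unselected $j$ only contributes the nonzero factor $x_i^2$). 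Choosing one such neighbour defines a map on the $L$ selected vertices; along any cycle of this map with $a$ steps of type ``$+1$'' and $b$ of type ``$-(L-1)$'' one gets $a-b(L-1)=0$ and $a+b\le L$, forcing $b=1$, $a=L-1$, so the map is a single cycle through all $L$ selected vertices whose positions are $L$ consecutive --- hence distinct --- integers; since consecutive selected vertices are adjacent in $G$, this is exactly a cycle of length $L$ (taking $L\ge 3$). Conversely, assigning positions $1,\dots,L$ around a given cycle and $x_i=1$, $y_i=0$ off the cycle satisfies every equation by inspection.

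\emph{Item 2.}
Fix the values $z$ and let $H$ be the subgraph of $G$ on $V(G)$ with edge set $\{e:z_e=1\}$, so $|E(H)|=K$ by \eqref{whichedges}. I would interpret the $3(n+m)$ variables $x_{\{i\}k},y_{\{ij\}k}$ ($k=1,2,3$) as the ranks of the $n$ vertices and $m$ edges in three linear orders $\prec_1,\prec_2,\prec_3$ of $V(G)\cup E(G)$: by \eqref{linearext1} together with the equations $s_k(\prod\cdots)=1$, the $n+m$ coordinates in each direction $k$ form a permutation of $\{1,\dots,n+m\}$, i.e.\ a genuine linear order. Reading \eqref{delta_constraint} as ``$\Delta\ge 1$,'' equation \eqref{linearext2} says each endpoint precedes its $H$-edge in all three orders, so each $\prec_k$ restricts to a linear extension of the incidence poset $P_H$ of $H$ on $V(G)\cup E(H)$; and \eqref{nomore1}--\eqref{nomore1a}, \eqref{nomore2}--\eqref{nomore2a} and the two vertex--vertex equations say that every pair incomparable in $P_H$ (a vertex and a non-incident $H$-edge, two distinct $H$-edges, or two distinct vertices) is ranked one way in some direction and the opposite way in another. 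Together these two conditions are exactly the assertion that $\bigcap_k\prec_k$, restricted to $V(G)\cup E(H)$, equals $P_H$; hence the system (for this $z$) is solvable if and only if $\dim(P_H)\le 3$. By Schnyder's theorem $\dim(P_H)\le 3$ if and only if $H$ is planar, which is one direction of item 2. For the converse, given a planar subgraph $H$ with $K$ edges, take three linear extensions realizing $P_H$ (padding to three if the dimension is smaller), extend each arbitrarily to a linear order of all of $V(G)\cup E(G)$, read off the ranks $x_{\{i\}k},y_{\{ij\}k}$, set each required $\Delta$ to the relevant positive rank-difference (and the unconstrained $\Delta$'s to, say, $1$), and let each $s_k$ be the required inverse; then all of \eqref{whichedges}--\eqref{delta_constraint} hold.

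\emph{Main obstacle.}
The one genuinely nontrivial ingredient is Schnyder's characterization of planarity by incidence-poset dimension; the rest is careful but routine translation between polynomial relations and statements about three linear orders. The remaining points that need attention are the degenerate small cases --- very small $G$, and isolated vertices of $H$, which are incomparable to everything in $P_H$ and can be placed generically in the three orders without raising $\dim(P_H)$ or affecting planarity --- and, for item 1, restricting to $L\ge 3$, the only meaningful range for cycles in a simple graph.
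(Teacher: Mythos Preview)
Your proposal is correct and follows essentially the same route as the paper: items 3 and 4 are handled identically via the roots-of-unity colouring encoding (the paper packages the key identity as Lemma~\ref{bayerlem}), and item 2 is reduced to Schnyder's theorem through the same ``three linear orders realize the incidence poset'' translation (the paper isolates this as a separate poset-dimension lemma before specializing). For item 1 your converse argument via the functional graph and the arithmetic constraint $a-b(L-1)=0$, $a+b\le L$ is in fact tighter than the paper's, which invokes a pigeonhole argument after asserting without proof that the $x_i$ lie in $\{1,\dots,L\}$; your version avoids that unjustified step.
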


 In the second half of the article, we look at the connection between
 polynomial systems and computational complexity. We have already mentioned that semidefinite programming is one way to approach
 optimization. It is natural to ask how big are such SDPs. For simplicity of analysis, we look at the case of feasibility
 instead of optimization. In this case, the SDPs are replaced by a large-scale linear algebra problem. We will discuss
 details in Section \ref{preliminariesNULL}. For a hard
 optimization problem, say Max-Cut, we associate a system of
 polynomial equations $J$ such that the system has a solution if and only if the problem has a feasible solution. On the
 other hand, the famous
 Hilbert Nullstellensatz (see \cite{coxetal}) states that a system of
 polynomial equations
 $J=\left\{f_1(x)=0,f_2(x)=0,\dots,f_r(x)=0\right\}$ with complex
 coefficients has no solution in ${\mathbb C}^n$ if and only if there
 exist polynomials $\alpha_1,\dots,\alpha_r \in {\mathbb
 C}[x_1,\dots,x_n]$ such that $1=\sum \alpha_if_i$~.  Thus, if the
 polynomial system $J$ has no solution, there exists a {\em
 certificate} that the combinatorial optimization problem is
 infeasible.

 There are well-known \emph{upper bounds} for the degrees of the
 coefficients $\alpha_i$ in the Hilbert Nullstellensatz certificate
 for \emph{general}
 systems of polynomials, and they turn out to be sharp (see
 \cite{kollar}). For instance, the following well-known
 example shows that the degree of $\alpha_1$ is at least $d^m$~:
 \begin{align*}
 f_1=x_1^d, f_2=x_1-x_2^d,\dots, f_{m-1}=x_{m-2}-x_{m-1}^d, f_m=1-x_{m-1}x_m^{d-1}~.
 \end{align*}
 But polynomial systems for combinatorial optimization are special. One
 question is how complicated are the degrees of Nullstellensatz
 certificates of infeasibility?  As we will see in Section
 \ref{preliminariesNULL}, unless $\text{P}=\text{NP}$~,
for every hard combinatorial problem, there must exist
an infinite sequence of infeasible instances for which the
minimum degree of a Nullstellensatz certificate, for
the associated system of polynomials, grows arbitrarily large. This was first observed by L. Lov\'asz who proposed the problem of finding explicit graphs in \cite{lovasz1}.
A main contribution of this article is to exhibit such
 growth of degree explicitly.  In the second part of the paper we
 discuss the growth of degree for the NP-complete problems stable set
and 3-colorability. We establish the following theorem:

 \begin{theorem} \label{degHN}

 \begin{enumerate}
 \renewcommand{\theenumi}{\arabic{enumi}.}

 \item Given a graph $G$~, let $\alpha(G)$ denote its stability number. A minimum-degree
 Nullstellensatz certificate for the non-existence
 of a stable set of size greater than $\alpha(G)$
has degree equal to $\alpha(G)$ and contains at least one term per
stable set in $G$~.

 \item
Every Nullstellensatz certificate for non-3-colorability of a graph
has degree at least four. Moreover, in the case of a graph containing
an odd-wheel or a clique as a subgraph,
a minimum-degree Nullstellensatz certificate for non-3-colorability 
has degree exactly four.
 \end{enumerate}

 \end{theorem}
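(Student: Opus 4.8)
The plan uses throughout the elementary duality behind Nullstellensatz degrees: a certificate $1=\sum_i\alpha_if_i$ in which every coefficient has degree $\le d$ exists if and only if $1$ lies in the $\C$-span $V_d:=\operatorname{span}\{m f_i : m\text{ a monomial},\ \deg m\le d\}$, and, when it does not, this is witnessed by a linear functional $L$ on polynomials with $L(1)\ne0$ vanishing on $V_d$; each lower bound below is proved by exhibiting such an $L$, each upper bound by exhibiting a certificate. \textbf{Part 1.} Use the standard encoding $x_i^2-x_i=0\ (i\in V(G))$, $x_ix_j=0\ (\{i,j\}\in E(G))$, $\sum_{v}x_v-k=0$ with $k:=\alpha(G)+1$; it is infeasible since $G$ has no stable set of size $k$. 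For the upper bound, work in $A:=\C[x]/(x_i^2-x_i,\ x_ix_j:\{i,j\}\in E)$, where the monomials $x_S$ indexed by stable sets $S$ form a $\C$-basis; multiplication by $\sum_v x_v$ sends $x_S$ to $|S|\,x_S+\sum_{v\notin S,\ S\cup v\ \mathrm{stable}}x_{S\cup v}$, hence is block upper triangular in the grading by $|S|$ with eigenvalues $0,1,\dots,\alpha(G)$, so $\sum_v x_v-k$ is a unit of $A$. Let $\lambda:=(\sum_v x_v-k)^{-1}\!\cdot1\in A$; written in the stable-set basis it is a polynomial of degree $\le\alpha(G)$, and reducing $1-\lambda(\sum_v x_v-k)$ modulo the vertex/edge relations gives coefficients of degree $\le\alpha(G)$, so there is a certificate of degree $\le\alpha(G)$. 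For the lower bound I kill $V_{\alpha(G)-1}$: $L(m(x_i^2-x_i))=0$ forces $L(x^a)=\ell(\operatorname{supp}a)$; $L(mx_ix_j)=0$ forces $\ell$ to vanish on non-stable sets; and $L(m(\sum_v x_v-k))=0$ becomes $\sum_{v:\,S\cup v\ \mathrm{stable}}\ell(S\cup v)=(k-|S|)\,\ell(S)$ for $|S|\le\alpha(G)-1$. Taking $\ell(S):=d_\alpha(S)/(\alpha(G)+1-|S|)$, where $d_\alpha(S)$ counts the maximum stable sets containing $S$, solves this (using $\sum_{v}d_\alpha(S\cup v)=(\alpha(G)-|S|)\,d_\alpha(S)$, a double count of pairs $(v,T)$ with $S\cup v\subseteq T$, $T$ maximum stable), and $\ell(\emptyset)=\#\{\text{maximum stable sets}\}/(\alpha(G)+1)\ne0$. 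Hence no certificate of degree $\alpha(G)-1$ exists, so the minimum degree is exactly $\alpha(G)$.

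For the ``one term per stable set'' assertion, evaluate \emph{any} certificate at the $0/1$ point $\chi_S$ of a stable set $S$: the vertex and edge polynomials vanish while $\sum_v x_v-k$ takes the value $-(\alpha(G)+1-|S|)$, so the coefficient polynomial $\lambda$ of $\sum_v x_v-k$ satisfies $\lambda(\chi_S)=-1/(\alpha(G)+1-|S|)$. Since every subset of a stable set is stable, Möbius inversion over the Boolean lattice shows the sum of the coefficients of the monomials of $\lambda$ with support exactly $S$ equals $-\sum_{j=0}^{|S|}(-1)^{|S|-j}\binom{|S|}{j}\tfrac1{\alpha(G)+1-j}=-\tfrac{|S|!\,(\alpha(G)-|S|)!}{(\alpha(G)+1)!}\ne0$ (a Beta-integral identity). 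Thus $\lambda$, and a fortiori the whole certificate, contains a monomial with support exactly $S$ for every stable set $S$ (including the nonzero constant term, for $S=\emptyset$).

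\textbf{Part 2.} Use $x_i^3-1=0\ (i\in V(G))$ and $x_i^2+x_ix_j+x_j^2=0\ (\{i,j\}\in E(G))$. For the bound $\ge4$, given a putative certificate with all coefficient degrees $\le3$ I produce a functional $L$ with $L(1)=1$ killing $V_3$: the relations $L(m(x_i^3-1))=0$ make $L$ well defined on monomials reduced modulo $x_i^3\equiv1$, and the relations $L(m(x_i^2+x_ix_j+x_j^2))=0$ for $\deg m\le3$ only couple monomials supported on at most five vertices. When $G$ has no $K_4$, every subgraph of $G$ on at most five vertices is $3$-colourable (every graph on $\le5$ vertices of chromatic number $\ge4$ contains $K_4$), and $L$ can be assembled from a consistent family of evaluations over proper $3$-colourings of those subgraphs; the case of a graph containing $K_4$ requires a separate, direct construction of $L$ anchored at one copy of $K_4$. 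For the matching upper bound, an explicit degree-$4$ certificate for $K_4$ — obtained by solving the finite linear system describing $V_4$ for $K_4$ — is simultaneously a certificate for any $G\supseteq K_4$, since its ten polynomials lie in the system of $G$; and an explicit degree-$4$ certificate for the odd wheel $W_{2k+1}$ can be written down by combining the bounded-degree identities that force each rim vertex off the hub's colour with a bounded-degree identity witnessing that the odd rim cycle then cannot be $2$-coloured. Together with the lower bound this gives minimum degree exactly $4$ in both cases.

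The principal obstacle is the lower bound of Part 2. In Part 1 the annihilating functional is a transparent closed form governed by the combinatorics of stable sets, but in Part 2 a degree-$3$ dual functional must be produced \emph{uniformly} for every non-$3$-colourable graph, and — unlike the $K_4$-free case, which follows from $3$-colourability of all small subgraphs — a graph containing $K_4$ admits no such local reduction, so a genuinely global construction of $L$ is needed. The two upper bounds of Part 2, and all of Part 1, are comparatively routine once the right objects have been written down.
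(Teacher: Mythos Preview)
Your Part~1 is correct and takes a genuinely different, more conceptual route than the paper. For the upper bound, the paper gives an explicit algorithmic construction of the certificate by careful bookkeeping of stable-set monomials; you instead observe that $\sum_v x_v - k$ is a unit in the quotient $A$ (upper-triangular with nonzero diagonal in the grading by $|S|$) and that its inverse applied to $1$ has degree $\le\alpha(G)$. For the lower bound, the paper argues by contradiction that the absence of any given stable-set monomial in the coefficient $A$ would force an infinite ascending chain of monomials in some $Q_i$; you write down the annihilating functional $\ell(S)=d_\alpha(S)/(\alpha(G)+1-|S|)$ in closed form and verify the single recursion it must satisfy. Your evaluation-plus-M\"obius argument for ``one term per stable set'' is likewise slicker than the paper's reliance on its two structural propositions. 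Both routes reach the same conclusions; yours is shorter and more conceptual, the paper's more elementary and fully constructive.

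Your Part~2, however, is not a proof, and your final paragraph essentially concedes this. The gaps are real.

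\emph{Lower bound.} Your plan is to build a dual functional $L$ killing $V_3$. For $K_4$-free $G$ you correctly note that every induced subgraph on at most five vertices is $3$-colourable, and you propose to ``assemble $L$ from a consistent family of evaluations over proper $3$-colourings of those subgraphs'' --- but you give no mechanism for making these local averages agree on overlapping $5$-sets, and there is no evident reason they should. For $G\supseteq K_4$ you offer nothing beyond the phrase ``a separate, direct construction of $L$ anchored at one copy of $K_4$.'' The paper avoids dual functionals altogether: it assumes a degree-$3$ certificate, reads off the linear equations satisfied by the unknown coefficients from the monomials $x_s^3x_i^3$, $x_s^2x_tx_i^3$, $x_sx_tx_ux_i^3$, $x_s^3$, $x_s^2x_t$, $x_sx_tx_u$, and the constant term, and exhibits an explicit linear combination of those equations collapsing to $1=0$. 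This argument is uniform in $G$ and uses nothing about $3$-colourability of subgraphs.

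\emph{Odd-wheel upper bound.} ``Can be written down by combining bounded-degree identities'' is not a proof. The paper's argument is specific and somewhat delicate: it exhibits a degree-$4$ certificate for the $3$-wheel in which the coefficient $\alpha_{\{13\}}$ of the rim-edge polynomial $e_{\{13\}}$ happens not to involve $x_3$, and then produces an explicit degree-$4$ syzygy expressing $\alpha_{\{13\}}e_{\{13\}}$ in terms of the edge polynomials of the $5$-wheel, with the \emph{same} $\alpha_{\{13\}}$ now multiplying the new rim edge $e_{\{15\}}$. Substituting this syzygy into the $3$-wheel certificate yields a $5$-wheel certificate with the same structural property, and the construction iterates. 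Your sketch gives no hint of such a mechanism, and a uniform degree-$4$ bound for all odd wheels is not evident without it.

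In short: your Part~1 is a clean alternative and can stand as written; for Part~2 you need either the paper's syntactic contradiction for the lower bound and its syzygy induction for odd wheels, or a complete construction of the dual functional and of the odd-wheel certificates you only gesture at.
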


The paper is organized as follows.  Our encoding results for longest
cycle and largest planar subgraph appear in Subsection
\ref{newencodings1}. As a direct consequence, we recover a polynomial
system characterization of the hamiltonian cycle problem.  Similarly,
we discuss how to express, in terms of polynomials, the decision
question of whether a poset has dimension $p$~. The encodings for
edge-chromatic number and largest $k$-colorable subgraph also appear in
Subsection \ref{newencodings1}. As we mentioned earlier, colorability
problems were among the first studied using the polynomial method; we
revisit those earlier results and end Subsection \ref{newencodings2} by
proposing a notion of \emph{dual coloring} derived from our algebraic
set up.  In Section \ref{preliminariesNULL} we discuss how the growth
of degree in the Nullstellensatz occurs under the assumption
$\text{P} \not= \text{NP}$~. We also sketch a linear algebra procedure we used
to compute minimum-degree Nullstellensatz certificates for particular
graphs. In Subsection \ref{HilbertNstable} we demonstrate the degree
growth of Nullstellensatz certificates for the stable set problem. In
contrast, in Subsection \ref{HilbertNcolor}, we exhibit many
non-3-colorable graphs where there is no growth of degree.

\section{Encodings}

In this section, we focus on how to find new polynomial encodings
of some combinatorial optimization problems. We begin by recalling two nice results in the polynomial
method that will be used later on. D. Bayer  established a characterization
of 3-colorability via a system of polynomial equations \cite{bayer}.
We generalize Bayer's result as follows:
\begin{lemma}\label{bayerlem}
The graph $G$ is $k$-colorable if and only if the following zero-dimensional system of equations \[
\begin{array}{rl} x_i^k-1=0, & \text{for every node $i \in V(G)$},\\
x_i^{k-1}+x_i^{k-2}x_j+\dots+x_j^{k-1}=0, & \text{for every edge $\{i,j\} \in E(G)$~,}
\end{array}
\]
has a solution. Moreover, the number of solutions equals the number of distinct $k$-colorings
multiplied by $k!$~.\label{graph_coloring_encoding}
\end{lemma}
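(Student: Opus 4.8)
The plan is to read the two families of equations as the two defining conditions of a proper coloring: ``every vertex receives one of $k$ colors'' and ``adjacent vertices receive different colors.'' The equation $x_i^k=1$ confines $x_i$ to the set $\mu_k$ of $k$-th roots of unity, a set of exactly $k$ elements that serves as the palette. The algebraic fact that makes the edge equations do their job is the elementary factorization
\[
x^k-y^k=(x-y)\bigl(x^{k-1}+x^{k-2}y+\cdots+xy^{k-2}+y^{k-1}\bigr)\qquad\text{in }\mathbb{C}[x,y];
\]
writing $g_{ij}$ for the edge polynomial $x_i^{k-1}+x_i^{k-2}x_j+\cdots+x_j^{k-1}$, this reads $x_i^k-x_j^k=(x_i-x_j)\,g_{ij}$.

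For the ``only if'' direction I would fix a primitive $k$-th root of unity $\zeta$ and, from a proper $k$-coloring $c\colon V(G)\to\{0,\dots,k-1\}$, set $x_i:=\zeta^{c(i)}$. Then $x_i^k=1$ for all $i$, and for each edge $\{i,j\}$ the identity gives $(x_i-x_j)\,g_{ij}=x_i^k-x_j^k=0$, while $c(i)\neq c(j)$ forces $x_i\neq x_j$, so $g_{ij}=0$; hence $(x_i)$ solves the system. For the ``if'' direction, from a solution $(x_i)$ I would define $c(i)\in\{0,\dots,k-1\}$ by $x_i=\zeta^{c(i)}$, which is legitimate because $x_i^k=1$. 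For an edge $\{i,j\}$, if $x_i=x_j$ then $g_{ij}=k\,x_i^{k-1}=k/x_i\neq0$ (here $x_i\neq0$ and $k\neq0$ in $\mathbb{C}$), contradicting $g_{ij}=0$; so $x_i\neq x_j$, i.e.\ $c$ is a proper $k$-coloring and $G$ is $k$-colorable. Zero-dimensionality is immediate, since the ideal contains every $x_i^k-1$, so the variety lies inside the finite set $\mu_k^{\,n}$.

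These two constructions are mutually inverse, so solutions correspond bijectively to proper colorings of $G$ by the $k$ root-of-unity colors (in particular the number of solutions equals the chromatic polynomial of $G$ at $k$). To obtain the factor $k!$, I would note that $S_k$ acts on the solution set by permuting the $k$ color values, carrying solutions to solutions, and that two solutions share an orbit exactly when the underlying colorings differ by a renaming of colors; matching this orbit count with the notion of a ``distinct $k$-coloring'' then yields the ``moreover'' statement. The step most likely to need care is the ``if'' direction --- the two-sided verification that on the root-of-unity torus $g_{ij}$ vanishes precisely off the diagonal $x_i=x_j$, one inclusion coming from the factorization and the other from $g_{ij}|_{x_i=x_j}=k\,x_i^{k-1}$ --- together with the $S_k$-orbit bookkeeping needed for the counting clause; neither presents a deep difficulty.
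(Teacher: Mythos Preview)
The paper does not give its own proof of this lemma; it is stated as a recalled result, with the $k=3$ case attributed to Bayer and the general $k$ offered as the evident generalization. Your argument for the biconditional is the standard one and is correct: the factorization $x_i^k-x_j^k=(x_i-x_j)g_{ij}$ together with the observation $g_{ij}|_{x_i=x_j}=kx_i^{k-1}\neq 0$ on $\mu_k$ shows that, on the torus $\mu_k^{\,n}$, the edge equation $g_{ij}=0$ is equivalent to $x_i\neq x_j$.

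You are also right to flag the counting clause as the delicate point, and in fact the ``moreover'' as literally stated is not correct without an extra hypothesis. Your bijection shows that the number of solutions is exactly the chromatic polynomial value $P(G,k)$. If ``distinct $k$-colorings'' is read as ``colorings up to permutation of the $k$ color names,'' then the identity (number of solutions) $=$ (number of distinct colorings) $\times k!$ requires the $S_k$-action on proper colorings to be free, which holds precisely when every proper $k$-coloring uses all $k$ colors, i.e.\ when $\chi(G)=k$. A single isolated vertex with $k=3$ already breaks it: there are $3$ solutions, one $S_3$-orbit, and $1\cdot 3!=6\neq 3$. So your caveat is well placed; the clause should be read under the tacit assumption $\chi(G)=k$ (the case of interest throughout the paper), or with ``distinct $k$-colorings'' simply meaning $P(G,k)$ and the $k!$ dropped.
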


 Recall that a {\em stable set} or {\em independent set} in a graph
 $G$ is a subset of vertices such that no two vertices in the subset
 are adjacent.  The maximum size $\alpha (G)$ of a stable set is
 called the {\em stability number} of $G$~. We view the stable sets in
 terms of their {\em incidence vectors}. These are 0/1 vectors of length
 $|V|$~, one for every stable set, where a
 one in the $i$-th entry indicates that the $i$-th vertex is a member of
 the associated stable set. These $0/1$ vectors can be
 fully described by a small system of {\em quadratic} equations:

 \begin{lemma}[L. Lov\'asz \cite{lovasz1}]
 The graph $G$ has stability number at least $k$ if and only if
 the following zero-dimensional system of equations
\begin{align}
x_i^2-x_i=0, & \quad \text{for every node $i \in V(G)$},\label{one}\\
x_ix_j=0, & \quad \text{for every edge $\{i,j\} \in E(G)$},\label{two} \\
\sum_{i=1}^n x_i=k,\label{three}
\end{align}
 has a solution.
\label{stable_set_encoding}
 \end{lemma}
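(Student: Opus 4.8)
The plan is to show directly that the $0/1$ vectors satisfying the system are precisely the incidence vectors of stable sets of $G$ of size exactly $k$; this yields both directions of the biconditional, and the zero-dimensionality claim drops out along the way. First I would note that any solution $x=(x_1,\dots,x_n)$ of \eqref{one} has each coordinate in $\{0,1\}$, since $x_i^2-x_i=x_i(x_i-1)$ has only the roots $0$ and $1$. Consequently the variety of the whole system is contained in the finite set $\{0,1\}^n$, so the ideal generated by \eqref{one}--\eqref{three} is zero-dimensional whether or not the system is feasible; this settles that part of the statement immediately.

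For the ``if'' direction, suppose $x$ is a solution and set $S=\{i : x_i=1\}$. Since each $x_i\in\{0,1\}$, equation \eqref{two} forces, for every edge $\{i,j\}$, that at most one of $i$ and $j$ lies in $S$; hence $S$ is a stable set, and equation \eqref{three} gives $|S|=\sum_i x_i=k$. Thus $G$ has a stable set of size $k$, so $\alpha(G)\ge k$.

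For the ``only if'' direction, suppose $\alpha(G)\ge k$ and fix a stable set $T$ with $|T|\ge k$. Choose any $S\subseteq T$ with $|S|=k$; a subset of a stable set is stable, so $S$ is stable. Define $x_i=1$ for $i\in S$ and $x_i=0$ otherwise. Then \eqref{one} holds because the entries are $0/1$, \eqref{two} holds because stability of $S$ means every edge has at most one endpoint in $S$ and hence one of its two factors vanishes, and \eqref{three} holds because $|S|=k$. So the system has a solution.

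I do not expect a genuine obstacle here: the entire argument rests on the single observation that the ``square equals itself'' equations simultaneously pin the variables to $\{0,1\}$ and force the ideal to be zero-dimensional. The only point worth stating carefully is the passage from ``a stable set of size at least $k$'' in the hypothesis to ``a stable set of size exactly $k$'' matching the equality constraint \eqref{three}, which is handled by discarding surplus vertices from a larger stable set. One could additionally record the number of solutions (it equals the number of stable sets of $G$ of size exactly $k$), but the lemma as stated requires only existence.
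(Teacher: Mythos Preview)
Your proof is correct and complete. Note, however, that the paper does not actually give its own proof of this lemma: it is stated with attribution to Lov\'asz and then immediately illustrated by the Petersen-graph example, so there is no argument in the paper to compare against. Your direct verification---forcing $x_i\in\{0,1\}$ from \eqref{one}, reading off stability from \eqref{two}, and matching cardinality from \eqref{three}, with the careful remark about passing from ``size at least $k$'' to ``size exactly $k$''---is exactly the standard proof one would expect.
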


\begin{exm}
\emph{
 Consider the Petersen graph labeled as in
 Figure \ref{npetersen}.
 \begin{figure}[h]
 \begin{center}
% left->bottom->right->top
 \includegraphics[width=3.5cm, trim=0 50 0 0]{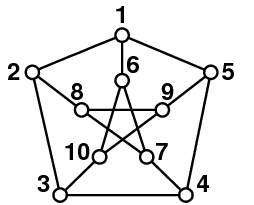}
 \end{center}
 \caption{Petersen graph}\label{npetersen}
 \end{figure}
 If we wish to check whether there are stable
 sets of size four, we take the ideal $I$ generated by the polynomials
in Eq. \ref{one}, \ref{two} and \ref{three}:
 \begin{eqnarray*}
 I &=& \big\langle x_1^2 - x_1, x_2^2 - x_2, x_3^2 - x_3, x_4^2 - x_4, x_5^2 - x_5, x_6^2 - x_6, x_7^2 - x_7, x_8^2 - x_8,
x_9^2 - x_9, x_{10}^2 - x_{10},\\
 && \hspace{5pt} x_1x_6, x_2x_8, x_3x_{10}, x_4x_7,x_5x_9,
 x_1x_2, x_2x_3, x_3x_4, x_4x_5, x_1x_5, x_6x_7,
 x_7x_8, x_8x_9, x_9x_{10}, x_6x_{10},\\
 && \hspace{5pt} x_1 + x_2 + x_3 + x_4 + x_5 + x_6 + x_7 + x_8 + x_9 + x_{10} - 4 \big\rangle.
 \end{eqnarray*}
By construction, we know that the quotient ring
 $R:=\C[x_1,\dots,x_{10}]/I$ is a finite-dimensional
 $\C$-vector space. Because the ideal $I$ is radical, its dimension
 equals the number of stable sets of cardinality four in the Petersen
 graph (not taking symmetries into account). Using Gr\"obner bases, we
 find that the monomials $1, x_{10}, x_9, x_8, x_7$ form a
 vector-space basis of $R$ and that there are no solutions with
 cardinality five; thus $\alpha(\text{Petersen})=4$~.  It is important to
 stress that we can recover the five different maximum-cardinality stable sets
 from the knowledge of the complex finite-dimensional vector space
 basis of $R$ (see \cite{coxetalII}).
 }
 \end{exm}

Next we establish similar encodings for the combinatorial problems
stated in Theorem \ref{encodingthm}.

\subsection{Proof of Theorem 1.1} \label{newencodings1}

\begin{proof}[Proof (Theorem \ref{encodingthm}, Part 1).]
Suppose that a cycle $C$ of length $L$ exists in the graph $G$~.
We set $y_i=1$ or $0$ depending on whether node $i$ is on $C$ or
not. Next, starting the numbering at any node of $C$~, we set
$x_i=j$ if node $i$ is the $j$-th node of $C$~. It is easy to check
that Eqs. \ref{sizecycle} and \ref{choosevertex} are satisfied.

To verify Eq. \ref{cyclicity}, note that since $C$ has length $L$~,
if vertex $i$ is the $j$-th node of the cycle, then one of its
neighbors, say $k$~, must be the ``follower'', namely the $(j+1)$-th
element of the cycle. If $j < L$~, then the factor $(x_i - x_k - 1) =
0$ appears in the product equation associated with the $i$-th vertex,
and the product is zero. If $j = L$~, then the factor $(x_i - x_k -
(L-1)) = 0$ appears, and the product is again 0. Since this is true
for all vertices that are turned ``on", and for all vertices that are ``off", we have Eq. \ref{cyclicity}
automatically equal to zero, all of the equations of the polynomials vanish.

Conversely, from a solution of the system above, we see that $L$
variables $y_i$ are not zero; call this set $C$~. We claim that the
nodes $i\in C$ must form a cycle. Since $y_i \not= 0$~, the polynomial
of Eq.
\ref{cyclicity} must vanish; thus for some $j \in C$~,
\begin{align*}
(x_i - x_j + 1) &= 0~, \quad \text{or} \quad (x_i - x_j - (L-1)) = 0~.
\end{align*}
Note that Eq. \ref{cyclicity} reduces to this form when $y_i = 1$~. Therefore, either vertex
$i$ is adjacent to a vertex $j$ (with $y_j = 1$) such
that $x_j$ equals the \textit{next integer value} ($x_i + 1 =
x_j$), or $x_i - L = x_j - 1$ (again, with $y_j = 1$). In the second case, since
$x_i$ and $x_j$ are integers between 1 and $L$~, this forces $x_i = L$ and
$x_j = 1$~. By the pigeonhole principle, this implies that all integer
values from 1 to $L$ must be assigned to some node in $C$ starting at
vertex 1 and ending at $L$ (which is adjacent to the node receiving
$1$).
\end{proof}

We have the following corollary.

\begin{corollary} A graph $G$ has a hamiltonian cycle if
and only if the following zero-dimensional system of $2n$ equations
has a solution. For every node $i \in V(G)$~, we have two equations:
\[
\prod_{s = 1}^n(x_i - s) = 0~, \quad \text{and} \quad \prod_{j \in Adj(i)}(x_i - x_j + 1)(x_i - x_j - (n-1)) = 0~.
\]
The number of hamiltonian cycles in the graph $G$ equals the number of solutions of the system divided by $2n$~.
\end{corollary}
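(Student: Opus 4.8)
The plan is to obtain the corollary as the special case $L=n$ of Part~1 of Theorem~\ref{encodingthm}. First I would observe that when $L=n$, Equation~\eqref{sizecycle} becomes $\sum_{i=1}^n y_i=n$, and together with the constraints $y_i(y_i-1)=0$ of Equation~\eqref{choosevertex} (which force each $y_i\in\{0,1\}$) this forces $y_i=1$ for every node $i$. Substituting $y_j=1$ everywhere in Equation~\eqref{cyclicity} turns its leading factor $y_i$ into $1$ and each product factor $(x_i-y_jx_j+y_j)(x_i-y_jx_j-y_j(L-1))$ into $(x_i-x_j+1)(x_i-x_j-(n-1))$. Hence, once $L=n$, the $y$-variables and every equation mentioning them are redundant, and the solution set of the full system projects bijectively onto the solution set of the reduced system in the $x$-variables alone -- namely the $2n$ equations listed in the corollary -- each $x$-solution extending uniquely by $y\equiv 1$. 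Since ``$G$ has a cycle of length $n$'' is the same as ``$G$ has a Hamiltonian cycle'', the stated equivalence follows from Theorem~\ref{encodingthm}; zero-dimensionality is inherited because each $x_i$ already satisfies the univariate equation $\prod_{s=1}^n(x_i-s)=0$, so the variety lies in $\{1,\dots,n\}^n$.

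For the counting assertion I would set up an explicit bijection between the solutions of the reduced system and the triples $(C,v,\text{direction})$, where $C$ is a Hamiltonian cycle of $G$, $v$ is the vertex of $C$ receiving the label $1$, and the direction is one of the two traversal orientations of $C$. Given a solution $x$, the equation $\prod_{s=1}^n(x_i-s)=0$ forces each $x_i\in\{1,\dots,n\}$; the pigeonhole argument already used in the proof of Part~1 (now with $L=n$) shows these values are pairwise distinct and that the holder of each value $j<n$ is adjacent to the holder of $j+1$ while the holder of $n$ is adjacent to the holder of $1$, so the selected edges form a single spanning cycle $C$, and $x$ is exactly the labeling of $C$ obtained by starting at $v=x^{-1}(1)$ and walking in the direction dictated by $x^{-1}(2)$. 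Conversely, each triple $(C,v,\text{direction})$ determines a unique labeling, which one checks satisfies both families of equations of the corollary. Distinct triples give distinct labelings, since the triple is recovered from $x$ as $(\text{follower graph of }x,\ x^{-1}(1),\ \text{orientation toward }x^{-1}(2))$; hence each Hamiltonian cycle contributes exactly $2n$ solutions and the total is $2n$ times the number of Hamiltonian cycles. (We take $n\ge 3$, the only range of interest for Hamiltonicity.)

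The routine part -- that all $2n$ labelings attached to a fixed cycle lie in $\C^n$ and satisfy both equation families -- is a direct verification, and the reduction from Part~1 is essentially bookkeeping. The only place needing genuine care, and hence the main obstacle, is ruling out spurious solutions of the reduced system: in principle a solution could assign labels whose follower structure branches or fails to close into a single $n$-cycle. This is precisely what the pigeonhole step in the proof of Theorem~\ref{encodingthm} excludes once $L=n$, since all $n$ distinct values must be used and the follower of value $j<n$ must carry value $j+1$ while the follower of $n$ must carry value $1$; I would invoke that argument verbatim rather than reprove it, and then the bijection -- and with it the count -- is complete.
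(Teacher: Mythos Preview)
Your proposal is correct and follows essentially the same approach as the paper: specialize Part~1 of Theorem~\ref{encodingthm} to $L=n$, observe this forces $y_i\equiv 1$ and collapses the system to the $2n$ equations stated, and then count by noting each Hamiltonian cycle yields exactly $2n$ labelings (choice of start vertex times choice of direction). The one point the paper adds that you omit is an explicit remark that the ideal is radical---because each variable $x_i$ satisfies a square-free univariate polynomial---so that ``number of solutions'' is unambiguous (no scheme-theoretic multiplicity); your bijection implicitly treats solutions set-theoretically, which is fine, but it is worth stating why the count is multiplicity-free.
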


\begin{proof}
Clearly when $L=n$ we can just fix all $y_i$ to $1$~, thus many
of the equations simplify or become obsolete. We only have to check
the last statement on the number of hamiltonian cycles. For that, we
remark that no solution appears with multiplicity because the ideal is
radical. That the ideal is radical is implied by the fact that every
variable appears as the only variable in a unique square-free
polynomial (see page 246 of \cite{robbiano}). Finally, note for every
cycle there
are $n$ ways to choose the initial node to be labeled as $1$~, and
then two possible directions to continue the labeling. \end{proof}

Note that similar results can be established for the directed graph
version, thus one can consider paths or cycles with
orientation. Also note that, we can use the polynomials systems above to
investigate the distribution of cycle lengths in a graph (similarly
for path lengths and cut sizes). This topic has several outstanding
questions. For example, a still unresolved question of Erd\"os and
Gy\'arf\'as  \cite{erdos} asks: If $G$ is a graph with minimum-degree
three, is it true that $G$ always has a cycle having length that is a power of
two?  Define the \emph{cycle-length polynomial} as the square-free
univariate polynomial whose roots are the possible cycle lengths of a
graph (same can be done for cuts).  Considering $L$ as a variable, the
reduced lexicographic Gr\"obner basis (with $L$ the last variable)
computation provides us with a unique univariate polynomial on $L$ that is divisible
by the cycle-length polynomial of $G$~.

Now we proceed to the proof of part 2 of Theorem \ref{encodingthm}. For
this we recall Schnyder's characterization of planarity in
terms of the dimension of a poset \cite{schnyder}: For an $n$-element
poset $P$~, a \emph{linear extension} is an order preserving bijection $\sigma:
P \rightarrow \{1,2,\dots, n\}$~. The \emph{poset dimension} of P is
the smallest integer $t$ for which there exists a family of $t$ linear
extensions $\sigma_1,\dots,\sigma_t$ of $P$ such that $x <y$ in P if
and only if $\sigma_i(x) <\sigma_i(y)$ for all $\sigma_i$~.  The
\emph{incidence poset} $P(G)$ of a graph $G$ with node set V and edge set E is
the partially ordered set of height two on the union of nodes and edges,
where we say $x < y$ if $x$ is a node and $y$ is an edge, and $y$ is
incident to $x$~.

\begin{lemma}[Schnyder's theorem \cite{schnyder}] A graph $G$ is planar if
and only if the poset dimension of $P(G)$ is no more than three.
\end{lemma}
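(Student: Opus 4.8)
This is Schnyder's theorem, a substantial classical result; I sketch the architecture of a proof. The plan is to treat the two implications separately, with the forward implication --- $G$ planar $\Rightarrow \dim P(G)\le 3$ --- carrying essentially all of the work. The pivotal object there is a \emph{realizer} (Schnyder wood) of a plane triangulation, while the converse can be obtained by more elementary means via Kuratowski's theorem.

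For ``$G$ planar $\Rightarrow \dim P(G)\le 3$'' I would first reduce to the case that $G$ is a maximal planar graph (a plane triangulation): augmenting a plane graph with edges until every face is a triangle only enlarges $P(G)$ as an induced subposet, dimension is monotone under passing to induced subposets, and the finitely many small graphs are checked by hand (for instance $\dim P(K_4)=3$). Fix a plane embedding with outer triangle $a_1a_2a_3$. The core step is to construct a Schnyder wood: an orientation together with a $3$-coloring of the interior edges so that every interior vertex has exactly one out-edge of each color, $a_i$ is the unique sink of color $i$, and around each interior vertex the incoming and outgoing edges of the three colors occur in the prescribed cyclic pattern. I would prove existence by induction on the number of vertices, peeling one vertex off the triangulation according to a shelling (canonical) order and re-routing the colored out-edges. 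Each color class is then a spanning tree $T_i$ oriented toward $a_i$; for a vertex $v$, the three directed root-paths $P_1(v),P_2(v),P_3(v)$ are pairwise non-crossing and cut the triangulation into three regions $R_1(v),R_2(v),R_3(v)$. From the counts of vertices and edges inside these regions I would define three linear extensions $\sigma_1,\sigma_2,\sigma_3$ of $P(G)$, and then verify that their intersection is exactly $P(G)$: the only nontrivial point is that for a node $v$ and an edge $e$ not incident to $v$, at least one $\sigma_i$ lists $e$ before $v$, which holds because $e$ lies strictly outside the region $R_i(v)$ for some $i$ --- a direct consequence of the non-crossing property of the three paths through $v$.

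For the converse, ``$\dim P(G)\le 3 \Rightarrow G$ planar,'' I would prove the contrapositive: a non-planar $G$ has $\dim P(G)\ge 4$. By Kuratowski's theorem $G$ contains a subgraph $H$ that is a subdivision of $K_5$ or of $K_{3,3}$, and $P(H)$ is an induced subposet of $P(G)$, so it suffices to show $\dim P(H)\ge 4$. This follows from two ingredients: a direct combinatorial argument --- a finite case analysis on how three linear extensions could resolve all non-incident node--edge pairs --- showing $\dim P(K_5)=\dim P(K_{3,3})=4$, together with a lemma that subdividing an edge never decreases the dimension of the incidence poset, so $\dim P(H)\ge \dim P(K_5)$ (respectively $\ge\dim P(K_{3,3})$). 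Alternatively, and more symmetrically, one can run the forward construction backwards: from three linear extensions realizing $P(G)$ one recovers a realizer-type structure on $G$ and reads off an explicit planar straight-line drawing (indeed on a grid), which exhibits planarity directly.

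The main obstacle is the forward direction, and within it the Schnyder wood: both the \emph{existence} proof --- a delicate induction that removes one vertex of a triangulation at a time while consistently re-routing the three oriented color classes --- and the \emph{verification} that the region-based linear extensions realize the incidence poset, i.e. the case check on where an edge $e$ sits relative to the three root-paths through a node $v$. By comparison the converse direction is routine: subgraph/subposet monotonicity is immediate, the lower bounds for $K_5$ and $K_{3,3}$ are a finite check, and the subdivision lemma, though it needs some care, is elementary.
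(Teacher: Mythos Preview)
The paper does not prove this lemma at all: it is stated as Schnyder's theorem with a citation to \cite{schnyder} and then used as a black box in the proof of Theorem~\ref{encodingthm}, Part~2. So there is no ``paper's own proof'' to compare your proposal against.

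That said, your sketch is a faithful outline of Schnyder's original argument: reduce to a plane triangulation, build a Schnyder wood (realizer), use the three rooted trees to split the triangulation into regions, and from the region counts read off three linear extensions whose intersection is $P(G)$; for the converse, use Kuratowski together with the computations $\dim P(K_5)=\dim P(K_{3,3})=4$ and monotonicity under subdivision. The identification of the forward direction, and specifically the existence and properties of the Schnyder wood, as the heart of the matter is correct. One small caution: the ``subdivision never decreases $\dim P$'' step is not quite as routine as your last paragraph suggests and deserves its own careful argument if you ever write this out in full; Schnyder handles it, but it is where a casual write-up most often goes wrong.
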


Thus our first step is to encode the linear extensions and the poset
dimension of a poset $P$ in terms of polynomial equations. The
idea is similar to our
characterization of cycles via permutations.

\begin{lemma} The poset $P=(E,>)$ has poset dimension at most $p$ if and only if the following system of equations has a solution:

For $k=1,\dots,p:$
\begin{align} \label{llinearext1}
 \prod_{s = 1}^{|E|}(x_i(k) - s)=0, \quad \text{for every $i \in \{1,\ldots,|E|\}$}, \quad \quad \text{and} \quad
s_k\bigg(\prod_{\stackrel{\{i,j\} \in \{1,\ldots,|E|\},}{i < j}} x_i(k)-x_j(k)\bigg)=1~.
 \end{align}
% We assign distinct numbers 1 to $|E|$ to the poset elements, we do this $p$ times.

For $k=1,\dots,p$~, and every ordered pair of comparable elements $e_i > e_j$ in $P$~:
\begin{align}\label{llinearext2}
 x_i(k)-x_j(k)-\Delta_{ij}(k) &=0.
 \end{align}
% in the number assignment we have p linear extensions.

For every ordered pair of incomparable elements of $P$ (i.e., $e_i \not> e_j$ and $e_j \not> e_i$)~:
\begin{align} \label{nomore}
\prod_{k=1}^p \big(x_i(k)-x_j(k)- \Delta_{ij}(k) \big) &= 0~, \quad \quad
\prod_{k=1}^p \big(x_j(k)-x_i(k)- \Delta_{ji}(k)\big) = 0~,
\end{align}
%at least one of the $p$-linear extensions detects the incomparability.

For $k=1,\dots,p$~, and for every pair $\{i,j\} \in \{1,\ldots,|E|\}$:
\begin{align} \label{llinearext_diff}
\prod_{d = 1}^{|E| - 1}(\Delta_{ij}(k) -d)=0, \quad \quad \prod_{d = 1}^{|E| - 1}(\Delta_{ji}(k) -d)=0~.
\end{align}
\end{lemma}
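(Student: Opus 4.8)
The plan is to read the system as an algebraic encoding of a \emph{realizer} of $P$ of size $p$ and then quote the definition of poset dimension recalled above. Identify the ground set of $P$ with $\{1,\dots,|E|\}$. The first product in \ref{llinearext1} forces each $x_i(k)\in\{1,\dots,|E|\}$, and the equation involving $s_k$ in \ref{llinearext1} forces $\prod_{i<j}\big(x_i(k)-x_j(k)\big)\neq 0$; hence for each $k$ the values $x_1(k),\dots,x_{|E|}(k)$ are pairwise distinct, so $\sigma_k\colon i\mapsto x_i(k)$ is a permutation of $\{1,\dots,|E|\}$. (The system is moreover zero-dimensional: all $x$- and $\Delta$-variables are constrained to finite sets, and each $s_k$ is then uniquely determined.) Equation \ref{llinearext_diff} forces every $\Delta$-variable into $\{1,\dots,|E|-1\}$, in particular to be a positive integer, so \ref{llinearext2} says that for every comparable pair $e_i>e_j$ one has $x_i(k)-x_j(k)=\Delta_{ij}(k)\geq 1$, i.e. $\sigma_k(i)>\sigma_k(j)$ for all $k$ --- equivalently, each $\sigma_k$ is a linear extension of $P$. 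Finally, for an incomparable pair $\{e_i,e_j\}$ the first product in \ref{nomore} forces one of its factors to vanish, producing a $k$ with $\sigma_k(i)>\sigma_k(j)$, and symmetrically the second product produces a $k'$ with $\sigma_{k'}(j)>\sigma_{k'}(i)$; this is exactly the realizer condition restricted to incomparable pairs.

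For the implication ``the system has a solution $\Rightarrow \dim P\leq p$'' I would simply assemble these observations: each $\sigma_k$ is a linear extension, and whenever $e_i\not>e_j$ in $P$ either $e_j>e_i$ (so $\sigma_k(i)<\sigma_k(j)$ for all $k$) or $e_i,e_j$ are incomparable (so the second product of \ref{nomore} supplies some $k$ with $\sigma_k(i)<\sigma_k(j)$). Hence $e_i>e_j$ in $P$ if and only if $\sigma_k(i)>\sigma_k(j)$ for all $k$, so $\{\sigma_1,\dots,\sigma_p\}$ is a realizer of $P$ of size $p$ and $\dim P\leq p$.

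For the converse I would start from a realizer $\sigma_1,\dots,\sigma_q$ with $q=\dim P\leq p$ and pad it to length exactly $p$ by repeating $\sigma_1$ (this leaves the intersection of the orders unchanged, so the family stays a realizer); then set $x_i(k):=\sigma_k(i)$. The first half of \ref{llinearext1} holds, and since each $\sigma_k$ is a bijection $s_k$ can be chosen to satisfy its equation. For a comparable pair $e_i>e_j$ set $\Delta_{ij}(k):=x_i(k)-x_j(k)$, which lies in $\{1,\dots,|E|-1\}$ because $\sigma_k$ is an order-preserving bijection, so \ref{llinearext2} and the corresponding instances of \ref{llinearext_diff} are satisfied. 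For an incomparable pair $\{e_i,e_j\}$, pick (by the realizer property) some $k_0$ with $\sigma_{k_0}(i)>\sigma_{k_0}(j)$ and put $\Delta_{ij}(k_0):=x_i(k_0)-x_j(k_0)$ so that the first product of \ref{nomore} vanishes; do the symmetric thing with $\Delta_{ji}$ using some $k_1$ with $\sigma_{k_1}(j)>\sigma_{k_1}(i)$; and assign the value $1$ to every remaining $\Delta$-variable. Then all of \ref{llinearext1}--\ref{llinearext_diff} hold.

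The individual verifications are routine; the points that need care are using the slack variables $s_k$ correctly to encode ``$\sigma_k$ is a permutation'', keeping straight which $\Delta$-variables are pinned down by \ref{llinearext2} and which are free to be chosen, and --- the crux of the argument --- recognizing that the \emph{two} product equations \ref{nomore} attached to an incomparable pair are precisely what force the existence of one linear extension placing $e_i$ above $e_j$ and another placing $e_j$ above $e_i$. That is the heart of the realizer condition, and the place where such an encoding is easiest to get wrong.
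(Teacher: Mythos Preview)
Your proposal is correct and follows the same approach as the paper: interpret \ref{llinearext1} as forcing each $\sigma_k$ to be a bijection, \ref{llinearext2} together with \ref{llinearext_diff} as forcing each $\sigma_k$ to be a linear extension, and the pair of products in \ref{nomore} as encoding the realizer condition on incomparable pairs. The paper's own proof is a three-sentence sketch that only gestures at one direction; your version is considerably more careful (handling both implications explicitly, padding a short realizer to length $p$, and specifying how the free $\Delta$-variables are assigned), but the underlying idea is identical.
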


\begin{proof}
With Eqs. \ref{llinearext1} and \ref{llinearext2}, we assign distinct numbers 1 through $|E|$ to the poset elements,
such that the properties of a linear extension are satisfied. Eqs. \ref{llinearext1} and \ref{llinearext2} are repeated
$p$ times, so $p$ linear
extensions are created. If the intersection of these extensions is indeed equal to the
original poset $P$~, then for every incomparable pair of elements in $P$~, at least one of the $p$ linear extensions must
detect the incomparability. But this is indeed the case for Eq. \ref{nomore}, which says that for the $l$-th linear extension,
the values assigned to the incomparable pair $e_i,e_j$ do not satisfy $x_i(l) < x_j(l)$~, but instead satisfy $x_j(l) > x_i(l)$~.
\end{proof}

\begin{proof}[Proof (Theorem \ref{encodingthm}, Part 2).]
We simply apply the above lemma to the particular pairs of order relations
of the incidence poset of the graph. Note that in the formulation we
added variables $z_{\{ij\}}$ that have the effect of turning on or off an edge of the input graph.
\end{proof}

\begin{exm}[Posets and Planar Graphs]
 \begin{figure}[h]
 \begin{center}
 \includegraphics[width=7cm]{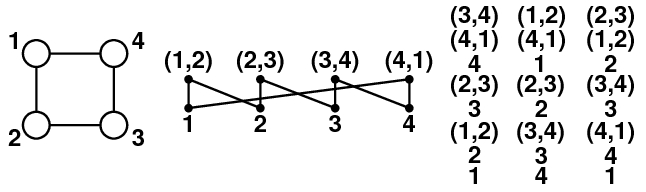}
 \end{center}
 \caption{Via Schnyder's theorem, the square is planar since $P(\text{square})$ has dimension at most three.}\label{fig_poset_graph}
\end{figure}
\emph{
}
\end{exm}

\begin{proof}[Proof (Theorem \ref{encodingthm}, Part 3).]
Using Lemma \ref{bayerlem}, we can finish the proof of Part 3. For a
$k$-colorable subgraph $H$ of size $R$~, we set $y_{ij}=1$ if edge $\{i,j\} \in E(H)$ or $y_{ij}=0$ otherwise.
By Lemma \ref{bayerlem}, the resulting
subsystem of equations has a solution. Conversely from a solution, the
subgraph $H$ in question is read off from those $y_{ij} \not= 0$~.
Solvability implies that $H$ is $k$-colorable.
\end{proof}

Before we prove Theorem \ref{encodingthm}, Part 4, we recall that the \textit{edge-chromatic number} of a
graph is the minimum number of colors necessary to color every edge of a graph such that no two edges of the
same color are incident on the same vertex.

\begin{proof}[Proof (Theorem \ref{encodingthm}, Part 4).]
If the system of equations has a solution,
then Eq. \ref{1_1_4_colors} insures that all variables $x_{ij}$ are assigned $\Delta$ roots of unity.
Eq. \ref{1_1_4_cyccity} insures that no node is incident on two edges of the same color. Since the
graph contains a vertex of degree $\Delta$~, the graph cannot have an edge-chromatic number less than
$\Delta$~, and since the graph is edge-$\Delta$-colorable, this implies that the graph has edge-chromatic
number exactly $\Delta$~. Conversely, if the graph has an edge-$\Delta$-coloring, simply map the coloring
to the $\Delta$ roots of unity and all equations are satisfied. Since Vizing's classic result shows that
any graph with maximum vertex degree $\Delta$ can be edge-colored with at most $\Delta + 1$ colors, if
there is no solution, then the graph must have an edge-chromatic number of $\Delta + 1$~.
\end{proof}

\subsection{Normal forms and Dual colorings} \label{newencodings2}

In \cite{alontarsi} Alon and Tarsi show another polynomial encoding of $k$-colorability. Here we consider one curious consequence of the polynomial method for graph
colorings when we use an algebraic encoding similar to that of
\cite{alontarsi}. By taking a closer look at the {\em normal form} of the
polynomials involved, we can derive a  notion of \emph{dual coloring}, which
has the nice property that a graph is dually $d$-colorable if and only
if it is $d$-colorable. This gives rise to an appealing new graph
invariant: the {\em simultaneous chromatic number} $\sigma(G)$~,
defined to be the infimal $d$ such that $G$ has a $d$-labeling that
is simultaneously a coloring and a dual coloring.

Fix a graph $G=(V,E)$ with $V:=\{1,\dots,n\}$ and $E\subseteq{V\choose2}$~,
fix a positive integer $d$~, and let $D:=\{0,1\dots,d-1\}$~.
Let $\alpha:=\exp({{2\pi i}\over d})\in \C$ be the primitive complex $d$-th
root of unity, so that $\alpha^0,\dots,\alpha^{d-1}$ are distinct and
$\alpha^d=1$~. For a $d$-labeling $c:V\longrightarrow D$ of the vertices of $G$~, let
$$
\e(c):=\prod \left\{(\alpha^{c(i)}-\alpha^{c(j)})\,:\, i<j,\ \{i,j\}\in E\right\}~.
$$
Clearly, $c$ is a proper $d$-coloring of $G$ if and only if $\e(c)\neq 0$~.

With every orientation $O=(V,A)$ of $G$ (where $A$ denotes the set of ``arrows" or directed edges) associate a sign $\sign^O=\pm 1$
defined by the parity of the number $|\{(i,j)\in A\,:\, i>j\}|$ of flips of
$O$ from the standard orientation (where every directed edge $(i,j)$ has $i < j$), and an out-degree vector
$\l^O:=(\l^O_1,\dots,\l^O_n)$ with $\l^O_i$ the out-degree of
vertex $i$ in $O$~. For a non-negative integer $k$ let $[k]\in D$ be the
representative of $k$ modulo $d$~, and for a vector
$\l=(\l_1,\dots,\l_n)\in V^n$ let $[\l]=([\l_1],\dots,[\l_n])\in D^V$~.
For a labeling $c^*:V\longrightarrow D$ of the vertices of $G$ let
$$
\e^*(c^*):= \sum\,\left\{\,\sign^O\ :\
\mbox{$O$ orientation of $G$ with $[\l^O]=c^*$}\, \right\}~.
$$
Call $c^*$ a {\em dual $d$-coloring} of $G$ if $\e^*(c^*)\neq 0$~.

\begin{theorem}
A graph has a $d$-coloring, namely $c\in D^V$ with $\e(c)\neq 0$
(so is $d$-colorable) if and only if it has a dual $d$-coloring,
namely $c^*\in D^V$ with $\e^*(c^*)\neq 0$ (so is dually $d$-colorable).
\end{theorem}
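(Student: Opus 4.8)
The plan is to route both properties through a single polynomial, the \emph{graph polynomial} $P(x):=\prod\{\,(x_i-x_j)\,:\,i<j,\ \{i,j\}\in E\,\}$, together with the ideal $I:=\langle\, x_i^d-1 : i\in V\,\rangle$. First I would record the orientation expansion of $P$: multiplying out the product, choosing the summand $+x_i$ (resp.\ $-x_j$) from a factor $(x_i-x_j)$ with $i<j$ amounts to orienting the edge $\{i,j\}$ out of $i$ (resp.\ out of $j$); the accumulated sign of a full choice is then exactly $\sign^O$ and the accumulated monomial is $\prod_{v}x_v^{\l^O_v}$, so summing over all $2^{|E|}$ orientations $O$ of $G$ gives $P(x)=\sum_{O}\sign^O\prod_v x_v^{\l^O_v}$. (This is the classical Alon--Tarsi identity; the sign and out-degree conventions in the statement are chosen precisely so that it holds on the nose.)

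Next I would reduce $P$ modulo $I$ — i.e.\ repeatedly replace $x_v^d$ by $1$ — to get a polynomial $\bar P$ all of whose exponent vectors lie in $D^V$. Collecting terms, the coefficient of $\prod_v x_v^{c^*_v}$ in $\bar P$ is $\sum\{\,\sign^O : [\l^O]=c^*\,\}=\e^*(c^*)$, so $G$ is dually $d$-colorable if and only if $\bar P\neq 0$ as a polynomial. On the other side, since $\alpha^d=1$ we have, for every $c\in D^V$, $\e(c)=P(\alpha^{c(1)},\dots,\alpha^{c(n)})=\bar P(\alpha^{c(1)},\dots,\alpha^{c(n)})$; and by the observation already recorded in the text, $G$ is $d$-colorable if and only if $\e(c)\neq 0$ for some $c$, i.e.\ if and only if $\bar P$ does not vanish at every point of the $d^n$-element set $W:=\{\alpha^0,\dots,\alpha^{d-1}\}^n$.

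So the whole theorem comes down to the claim that $\bar P$ is the zero polynomial if and only if $\bar P$ vanishes identically on $W$; one direction is immediate. For the other I would use that $I$ is a radical, zero-dimensional ideal with $\text{variety}(I)=W$, while the $d^n$ monomials $\{\prod_v x_v^{c^*_v} : c^*\in D^V\}$ form a $\C$-basis of $\C[x]/I$; hence the evaluation map $\C[x]/I\to\C^{W}$ is a linear isomorphism, so a polynomial supported on those monomials that vanishes on all of $W$ must be zero. (Concretely, the evaluation matrix is the $n$-fold Kronecker power of the $d\times d$ matrix $(\alpha^{ab})_{0\le a,b\le d-1}$, a Vandermonde matrix in the distinct nodes $\alpha^0,\dots,\alpha^{d-1}$, hence invertible.) Chaining the equivalences — $G$ is $d$-colorable $\Longleftrightarrow$ $\e(c)\neq 0$ for some $c$ $\Longleftrightarrow$ $\bar P$ is not identically zero on $W$ $\Longleftrightarrow$ $\bar P\neq 0$ as a polynomial $\Longleftrightarrow$ $\e^*(c^*)\neq 0$ for some $c^*$ $\Longleftrightarrow$ $G$ is dually $d$-colorable — finishes the proof. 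The only step carrying genuine content is this nondegeneracy of root-of-unity evaluation; everything else is bookkeeping identifying the numbers $\e(c)$ and $\e^*(c^*)$ with, respectively, the values and the coefficients of the single polynomial $\bar P$.
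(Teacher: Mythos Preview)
Your proof is correct and follows essentially the same route as the paper's: both pass through the graph polynomial $f_G$, its orientation expansion $f_G=\sum_O \sign^O x^{\l^O}$, and its normal form modulo the radical zero-dimensional ideal $I=\langle x_i^d-1\rangle$, identifying the $\e(c)$ as values and the $\e^*(c^*)$ as coefficients of the reduced polynomial. The only cosmetic difference is that the paper invokes radicality of $I$ directly (vanishing on $\text{variety}(I)\Leftrightarrow$ membership in $I\Leftrightarrow$ normal form zero), whereas you phrase the same fact as injectivity of the evaluation map $\C[x]/I\to\C^W$ and supply the explicit Vandermonde/Kronecker justification.
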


\begin{proof}
Let $G$ be a graph on $n$ vertices. Consider the following radical
zero-dimensional ideal $I$ in $\C[x_1,\dots,x_n]$ and its variety
$\text{variety}(I)$ in $\C^n$:
$$I \ :=\ \langle x_1^d-1,\dots,x_n^d-1\rangle \ ,\quad
\text{variety}(I) \ :=\ \{\,\alpha^c:=(\alpha^{c(1)},\dots,\alpha^{c(n)})\in\C^n
\ : \ c\in D^V \, \}\ .$$
It is easy to see that the set $\{x_1^d-1,\dots,x_n^d-1\}$
is a universal Gr\"obner basis (see \cite{BOT} and references therein). Thus, the (congruence classes of) monomials
$x^{c^*},\ c^*\in D^V$ (where $x^{c^*}:=\prod_{i=1}^n x_i^{c^*(i)}$),
which are those monomials not divisible by any $x_i^d$~, form a vector space basis
for the quotient $\C[x_1,\dots,x_n]/I$~.
Therefore, every polynomial $f=\sum a_{\l}\cdot x^{\l}$ has a unique
{\em normal form} $[f]$ with respect to this basis, namely the polynomial that
lies in the vector space spanned by the monomials $x^{c^*},\ c^*\in D^V$~,
and satisfies $f-[f]\in I$~. It is not very hard to show that this
normal form is given by $[f]=\sum a_{\l}\cdot x^{[\l]}$~.

Now consider the {\em graph polynomial} of $G$~,
$$f_G:=\prod \left\{\,(x_i-x_j)\,:\, i<j,\ \{i,j\}\in E\,\right\}~.$$
The labeling $c\in D^V$ is a $d$-coloring of $G$ if and only if
$\e(c)=f_G(\alpha^c)\neq 0$~.
Thus, $G$ is not $d$-colorable if and only if $f_G$ vanishes on
every $\alpha^c\in \text{variety}(I),$ which holds if and only if $f\in I,$ since
$I$ is radical. It follows that $G$ is $d$-colorable if and only if the
representative of $f_G$ is not zero.
Since $f_G=\sum \sign^O \cdot x^{\l^O}$~, with the sum extending
over the $2^{|E|}$ orientations $O$ of $G$~, we obtain
$$[f_G]=\sum \sign^O\cdot x^{[\l^O]}
= \sum_{c^*\in D^V}\e^*(c^*)\cdot x^{c^*}~.$$
Therefore $[f_G]\neq 0$ and $G$ is $d$-colorable if and only if
there is a $c^*\in D^V$ with $\e^*(c^*)\neq 0$~.
\end{proof}

\begin{exm}
\emph{
Consider the graph $G=(V,E)$ having $V=\{1,2,3,4\}$ and $E=\{12,13,23,24,34\}$~,
and let $d=3$~. The normal form of the graph polynomial can be shown to be
{\small
\begin{eqnarray*}
[f_G]&=& x_1^2x_2^2x_3-x_1^2x_2^2x_4+x_1^2x_2x_4^2-x_1^2x_2x_3^2+x_1^2x_3^2x_4
-x_1^2x_3x_4^2+x_1x_2-x_1x_2x_3^2x_4+x_1x_3^2x_4^2\\
&\phantom{=}&- x_1x_3+x_1x_2^2x_3x_4-x_1x_2^2x_4^2+x_3^2
-x_3x_4+x_2^2x_3x_4^2-x_2^2+x_2x_4-x_2x_3^2x_4^2\ .
\end{eqnarray*}}
\noindent
Note that in general, the number of monomials appearing in the
expansion of $f_G$ can be as much as the number of orientations $2^{|E|}$;
but usually it will be smaller due to cancellations that occur. Moreover,
there will usually be further cancellations when moving to the normal form,
so typically $[f_G]$ will have fewer monomials. In our example,
out of the $2^{|E|}=2^5=32$ monomials corresponding to the orientations,
in the expansion of $f_G$ only $20$ appear, and in the normal form
$[f_G]$ only $18$ appear due to the additional cancellation:
\begin{align*}
-[x_1x_3^3x_4]+[x_1x_2^3x_4]=-x_1x_4+x_1x_4=0~.
\end{align*}
Note that the graph $G$ in this example has only six $3$-colorings
(which are in fact the same up to relabeling of the colors), but as many as
$18$ dual $3$-colorings $c^*$ corresponding to monomials $x^{c^*}$ appearing
in $[f_G]$~. For instance, consider the labeling
$c^*(1)=c^*(2)=c^*(4)=0,\ c^*(3)=2$: the only orientation $O$ that
satisfies $[\delta^O_j]=c^*(j)$ for all $j$ is one with
edges oriented as $21,23,24,31,34$~, having $\sign^O=1$ and out-degrees
$\delta^O_1=\delta^O_4=0$~, $\delta^O_3=2$ and $\delta^O_2=3$~,
contributing to $[f_G]$ the non-zero term
$\e^*(c^*)\cdot\prod_{j=1}^4 x_j^{c^*(j)}=1\cdot x_1^0x_2^0x_3^2x_4^0=x_3^2$~.
Thus, $c^*$ is a dual $3$-coloring (but, since $c^*(1)=c^*(2)$~, it is neither
a usual $3$-coloring nor a {\em simultaneous $3$-coloring} --- see below).
}
\end{exm}

Note that in this example, and seemingly often, there are many more
dual colorings than colorings; this suggests a randomized heuristic
to find a dual $d$-coloring for verifying $d$-colorability.

A particularly appealing notion that arises is the following: call a vertex
labeling $s:V\longrightarrow D$ a {\em simultaneous $d$-coloring}
of a graph $G$ if it is simultaneously a $d$-coloring and
a dual $d$-coloring of $G$~. The {\em simultaneous chromatic number}
$\sigma(G)$ is then the minimum $d$ such that $G$ has a
simultaneous $d$-coloring. This is a strong notion that may prove useful
for inductive arguments, perhaps in the study of the 4-color problem of
planar graphs, and which provides an upper bound on the usual
chromatic number $\chi(G)$~. First note that, like the usual chromatic number,
it can be bounded in terms of the maximum degree $\Delta(G)$ as follows.

\begin{theorem}
The simultaneous chromatic number of any graph $G$ satisfies~~$\sigma(G)\leq \Delta(G)+1$~. Moreover,
for any $G$ and $d\geq\Delta(G)+1$~, there is an acyclic orientation $O$ whose out-degree vector
$\delta^O=(\delta^O_1,\dots,\delta^O_n)$ provides a simultaneous $d$-coloring $s$ defined by
$s(i):=\delta^O_i$ for every vertex $i$~.
\end{theorem}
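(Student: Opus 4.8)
The plan is to establish the stronger ``moreover'' statement directly; the bound $\sigma(G)\le\Delta(G)+1$ then follows by taking $d=\Delta(G)+1$. Fix $d\ge\Delta(G)+1$ and write $D=\{0,\dots,d-1\}$. The first remark is that every orientation of $G$ has all out-degrees at most $\Delta(G)\le d-1$, so any out-degree vector already lies in $D^V$ and is equal to its own reduction mod $d$. Hence it suffices to exhibit an acyclic orientation $O$ such that (i) $\delta^O_i\ne\delta^O_j$ for every edge $\{i,j\}$ --- so that $s(i):=\delta^O_i$ is a proper $d$-coloring --- and (ii) $\e^*(s)\ne 0$ --- so that $s$ is also a dual $d$-coloring, hence a simultaneous $d$-coloring.

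For (ii) I would use the elementary fact that two orientations of $G$ sharing the same out-degree vector differ only by reversing an edge-disjoint union of directed cycles: if $O'$ has $\delta^{O'}=\delta^O$, the set $F$ of edges on which $O$ and $O'$ disagree, oriented as in $O$, is a subdigraph of $O$ in which in-degree equals out-degree at every vertex (this is exactly the condition $\delta^{O'}=\delta^O$), hence $F$ is empty when $O$ is acyclic. So an acyclic orientation $O$ is the \emph{unique} orientation of $G$ with out-degree vector $\delta^O$; since also $[\delta^O]=\delta^O$, the sum defining $\e^*(\delta^O)$ has a single term and equals $\sign^O=\pm1\ne 0$. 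Thus $\delta^O$ is a dual $d$-coloring for \emph{any} acyclic orientation $O$, and only (i) remains.

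For (i) I would construct a suitable acyclic orientation by induction on $|V(G)|$: pick a vertex $v$ of maximum degree in $G$, orient $G-v$ by the inductive hypothesis, and re-insert $v$ as a source, directing all edges at $v$ away from $v$. Adjoining a source to an acyclic digraph keeps it acyclic; the out-degrees of the other vertices are unchanged, so the property holds on all edges of $G-v$; and for an edge $\{v,u\}$ one has $\delta^O_v=\deg_G(v)=\Delta(G)$ while the out-degree of $u$ is at most $\deg_{G-v}(u)=\deg_G(u)-1\le\Delta(G)-1<\delta^O_v$. (Unwinding the recursion, $O$ orients every edge from the endpoint deleted earlier to the endpoint deleted later, and $\delta^O_w$ is the degree of $w$ at the instant it is deleted.) All out-degrees are $\le\Delta(G)\le d-1$, as required.

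Putting the pieces together: the orientation $O$ from the third paragraph is acyclic, so by the second paragraph $s(i):=\delta^O_i$ is a dual $d$-coloring; by construction it is also a proper $d$-coloring with values in $D$; hence $s$ is a simultaneous $d$-coloring, which is the ``moreover'' claim, and $d=\Delta(G)+1$ gives $\sigma(G)\le\Delta(G)+1$. I expect the main obstacle to be part (i): one must choose the orientation correctly (a generic acyclic orientation will \emph{not} have distinct out-degrees across every edge), and the max-degree deletion order is what makes the strict inequality $\delta^O_u<\delta^O_v$ go through on every edge incident to a newly inserted source. Once (i) is in hand, the cancellation-free evaluation of $\e^*$ in (ii) is immediate from the cycle-reversal description of fixed-out-degree orientations.
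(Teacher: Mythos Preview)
Your proposal is correct and follows essentially the same approach as the paper: both pick a vertex of maximum degree, make it a source, and recurse on the rest, using acyclicity to force uniqueness of the out-degree sequence and hence $\e^*(s)=\pm1$. Your presentation differs only in that you decouple the two verifications---first showing (ii) holds for \emph{any} acyclic orientation via the cycle-reversal characterization, then building the specific orientation satisfying (i)---whereas the paper carries both through a single induction; this modularization is a mild expository improvement but not a different argument.
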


\begin{proof} We prove the second (stronger) claim, by induction on the number $n$ of vertices.
For $n=1$, this is trivially true. Suppose $n>1$~, and let $d:=\Delta(G)+1$~. Pick any vertex $i$ of maximum
degree $\Delta(G)$~, and let $G'$ be the graph obtained from $G$ by removing vertex $i$ and all edges incident
on $i$~. Let $O'$ be an acyclic orientation of $G'$ and $s'$ the corresponding simultaneous $d$-coloring of
$G'$ guaranteed to exist by induction. Extend $O'$ to an orientation $O$ of $G$ by orienting all edges
incident on $i$ away from $i$~, and extend $s$ to the corresponding vertex labeling of $G$ by setting
$s(i):=\delta^O_i=d-1$~. Then $O$ is acyclic, and therefore $O$ is the unique orientation of $G$ with
out-degree vector $\delta^O$~. Thus,
$$\epsilon^*(s) \ = \ \sum\,\{\,\sign^{\theta}\,:\,\mbox{$\theta$ orientation of $G$ with
$[\delta^{\theta}]=s=\delta^O$}\,\}\ =\ \pm1\ \neq\ 0\ , $$and therefore $s$ is a dual
$d$-coloring of $G$~. Moreover, if $j$ is any neighbor of $i$ in $G$~, then the degree of
$j$ in $G'$ is at most $d-2$~, and therefore its label $s'(j)=\delta^{O'}(j)\leq d-2$~, and
hence $s(j)=s'(j)\neq d-1= s(i)$~. Therefore, $s$ is also a $d$-coloring of $G$~, completing the induction.
\end{proof}

\begin{exm}[simultaneous 4-coloring of the Petersen graph]\\
\emph{
According to Figure \ref{prop_sim_chrom_ex}, $\delta^O=(2,1,0,2,0,3,1,2,3,1)$~.
By inspection of Figure \ref{prop_sim_chrom_ex}, $s(i) := \delta_i^O$ does indeed describe a valid 4-coloring of the Petersen graph.
}
 \begin{figure}[h]
 \begin{center}
 \includegraphics[width=7cm]{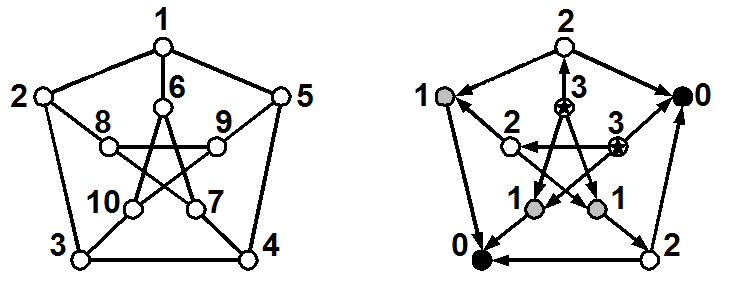}
 \end{center}
 \caption{Left: A vertex labeling. Right: An acyclic orientation labeled with out-degrees.}\label{prop_sim_chrom_ex}
 \end{figure}
\end{exm}

There are many fascinating new combinatorial and computational problems
related to this new graph invariant, the behavior of which is quite different
from that of the usual chromatic number. For instance, the direct analog
of Brooks' theorem, which states that every connected graph with
maximum degree $\Delta$ that is neither complete nor an odd cycle
is $\Delta$-colorable, fails: It is not hard to verify that the
simultaneous chromatic number of the cycle $C_n$ is $2$ if and only
if $n$ is a multiple of $4$; thus, the hexagon satisfies
$\sigma(C_6)=3>\Delta(C_6)$~.
Which are the simultaneous chromatic Brooks graphs, i.e. those with
$\sigma(G)=\Delta(G)$ ? What is the complexity of deciding if a graph is
simultaneously $d$-colorable? Which graphs are simultaneously $d$-colorable
for small $d$~? For $d=2$~, the complete answer was given by L. Lov\'asz \cite{LovOb}
during a discussion at the Oberwolfach Mathematical Institute:

\begin{theorem}\label{Bipartite}{\bf (Lov\'asz)}
A connected bipartite graph $G=(A,B,E)$
has simultaneous chromatic number $\sigma(G)=2$ if and only if
at least one of $|A|$ and $|B|$ has the same parity as $|E|$~.
\end{theorem}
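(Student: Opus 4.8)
The plan is to reduce the claim to evaluating two coefficients of the normal form $[f_G]$ of the graph polynomial, and then to compute those coefficients by evaluation on the Boolean cube.

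First, a labeling $s\colon V\to\{0,1\}$ can be a simultaneous $2$-coloring only if it is a proper $2$-coloring, i.e. $\e(s)\neq0$. Since $G$ is connected and bipartite there are exactly two such labelings: $s_A$, equal to $0$ on $A$ and $1$ on $B$, and $s_B$, equal to $1$ on $A$ and $0$ on $B$. Hence $\sigma(G)=2$ if and only if $\e^*(s_A)\neq0$ or $\e^*(s_B)\neq0$. By the identity $[f_G]=\sum_{c^*}\e^*(c^*)\,x^{c^*}$ established in the proof of the dual-coloring theorem above, $\e^*(s_A)$ and $\e^*(s_B)$ are exactly the coefficients of the monomials $\prod_{j\in B}x_j$ and $\prod_{i\in A}x_i$ in the normal form of $f_G=\prod_{\{i,j\}\in E,\ i<j}(x_i-x_j)$ with respect to the ideal $\langle x_1^2-1,\dots,x_{|V|}^2-1\rangle$ (here $d=2$, so $\alpha=-1$).

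Second, I would extract these coefficients using the fact that $[f_G]$ is multilinear and agrees with $f_G$ on $\text{variety}(I)=\{\pm1\}^{|V|}$: the coefficient of $x^{c^*}$ in a multilinear polynomial is $\e^*(c^*)=2^{-|V|}\sum_{c}f_G(\alpha^c)\prod_{i:\,c^*(i)=1}\alpha^{c(i)}$, a one-line consequence of $\sum_c(-1)^{\langle u,c\rangle}=2^{|V|}[u=0]$. Now $f_G(\alpha^c)=\prod_{\{i,j\}\in E}(\alpha^{c(i)}-\alpha^{c(j)})$ vanishes unless $c$ is a proper $2$-coloring, so only $c=s_A$ and $c=s_B$ contribute; on each of these every factor equals $\pm2$, so $f_G(\alpha^{s_A})=f_G(\alpha^{s_B})=\pm2^{|E|}$, the sign being $(-1)^{\beta}$ respectively $(-1)^{|E|-\beta}$ where $\beta$ counts the edges whose smaller endpoint lies in $B$. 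Substituting, each of $\e^*(s_A)$, $\e^*(s_B)$ collapses to a two-term sum equal to $\pm2^{|E|-|V|}\big((-1)^{|B|}+(-1)^{|E|}\big)$, respectively $\pm2^{|E|-|V|}\big((-1)^{|A|}+(-1)^{|E|}\big)$, once the labeling-dependent sign $(-1)^{\beta}$ and the sign characters $\prod_{i:\,s_A(i)=1}\alpha^{c(i)}$, $\prod_{i:\,s_B(i)=1}\alpha^{c(i)}$ are folded together. Hence $\e^*(s_A)\neq0\iff|B|\equiv|E|\pmod2$ and $\e^*(s_B)\neq0\iff|A|\equiv|E|\pmod2$, which together with the first step yields the theorem.

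The step I expect to cost the most care is not any single computation but the bookkeeping in that last display: tracking the sign of $f_G$ at $\alpha^{s_A},\alpha^{s_B}$, the two sign characters, and verifying that the labeling-dependent pieces cancel so that only the parity comparisons of $|A|$ and $|B|$ with $|E|$ survive. It is also worth flagging where the hypotheses enter: connectedness forces exactly two proper $2$-colorings, and bipartiteness both guarantees they exist and is exactly what makes $f_G(\alpha^{s_A}),f_G(\alpha^{s_B})$ nonzero, so that the two monomials actually carry nonzero coefficients to compare. A parallel route, which I would add as a remark, computes $\e^*(s)=\sum\{\,\sign^O : [\delta^O]=s\,\}$ directly as a signed count $\sum_{My=b}(-1)^{|y|}$ over $\mathbb{F}_2$, with $M$ the vertex–edge incidence matrix of $G$; that sum is $0$ unless $My=b$ is solvable, i.e. unless $\sum_v s(v)\equiv|E|\pmod2$, and when solvable it equals $\pm2^{|E|-|V|+1}$ precisely because the cycle space of $G$ is orthogonal to the all-ones edge vector — again equivalent to bipartiteness.
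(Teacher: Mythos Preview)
The paper does not actually contain a proof of this theorem: it is stated and attributed to Lov\'asz from an Oberwolfach discussion, with no argument given. So there is nothing to compare your approach against.

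That said, your proof is correct. The two-step reduction is exactly right: connectedness pins down the only two candidate $2$-colorings $s_A,s_B$, and the question reduces to whether either of the corresponding coefficients $\e^*(s_A),\e^*(s_B)$ of $[f_G]$ is nonzero. Your Fourier-type extraction of those coefficients over $\{\pm1\}^{|V|}$ is clean, and the sign bookkeeping you flagged as delicate does work out: writing $\beta$ for the number of edges whose smaller endpoint lies in $B$, one gets
\[
\e^*(s_A)=2^{|E|-|V|}(-1)^{\beta}\bigl((-1)^{|B|}+(-1)^{|E|}\bigr),\qquad
\e^*(s_B)=2^{|E|-|V|}(-1)^{\beta}\bigl(1+(-1)^{|A|+|E|}\bigr),
\]
so the $\beta$-dependent factor is a harmless global sign and the nonvanishing conditions are exactly $|B|\equiv|E|\pmod 2$ and $|A|\equiv|E|\pmod 2$, as claimed. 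Your alternative route via the $\mathbb{F}_2$ incidence matrix and the cycle space is also sound and arguably more combinatorial; either would be a welcome addition, since the paper currently supplies none.
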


\section{Nullstellensatz Degree Growth in Combinatorics} \label{preliminariesNULL}

The Hilbert Nullstellensatz states that a system of polynomial
equations $\left\{f_1(x)=0,f_2(x)=0,\dots,f_r(x)=0\right\} \subseteq \mathbb{C}[x_1,\ldots,x_n]$ has no solution in ${\mathbb C}^n$ if and only if
there exist polynomials $\alpha_1,\dots,\alpha_r \in {\mathbb
  C}[x_1,\dots,x_n]$ such that $1=\sum \alpha_if_i$ (see
\cite{coxetal}).  The purpose of this section is to investigate the
degree growth of the coefficients $\alpha_i$~. In particular,
systems of polynomials coming from combinatorial optimization.

In our investigations, we will often need to find explicit
Nullstellensatz certificates for specific graphs. This can be done via
linear algebra. First, given a system of polynomial equations, fix a
tentative degree for the coefficient polynomials $\alpha_i$ in the
Nullstellensatz certificates. This yields a \emph{linear} system of
equations whose variables are the coefficients of the monomials of the
polynomials $\alpha_1,\dots,\alpha_r$~. Then, solve this linear
system. If the system has a solution, we have found a
Nullstellensatz certificate. Otherwise, try a higher degree for the polynomials $\alpha_i$~.  For the
Nullstellensatz certificates, the
degrees of the polynomials $\alpha_i$ cannot be more than known bounds (see e.g.,
\cite{kollar} and references therein), thus this is a finite (but
potentially long) procedure to decide whether a system of polynomials is feasible
or not. In practice, sometimes low degrees suffice to find a certificate.

\begin{exm}
\emph{
Suppose we wish
 to test $K_4$ for 3-colorability, and we assume that the $\alpha_i$ in the Nullstellensatz
 certificate have degree 1. After encoding $K_4$ with the system of polynomial equations, we
``conjecture" that there exists a Nullstellensatz certificate of the following form
 \begin{align*}
 1 &= (c_{1}x_1 + c_{2}x_2 + c_{3}x_3 + c_{4}x_4 + c_{5})(x_1^3 - 1) + (c_{6}x_1 + c_{7}x_2 + c_{8}x_3 + c_{9}x_4 + c_{10})(x_2^3 - 1)\\
 &\quad + (c_{11}x_1 +\cdots + c_{15})(x_3^3 - 1)  + (c_{16}x_1 +\cdots +c_{20})(x_4^3 - 1)\\
 &\quad + (c_{21}x_1 +\cdots +c_{25})(x_1^2 + x_1x_2 + x_2^2) + (c_{26}x_1 +\cdots+ c_{30})(x_1^2 + x_1x_3 + x_3^2) \\
 &\quad + (c_{31}x_1 +\cdots +c_{35})(x_1^2 + x_1x_4 + x_4^2) + (c_{36}x_4 +\cdots+ c_{40})(x_2^2 + x_2x_3 + x_3^2) \\
 &\quad + (c_{41}x_1 +\cdots + c_{45})(x_2^2 + x_2x_4 + x_4^2) + (c_{46}x_1 +\cdots + c_{50})(x_3^2 + x_3x_4 + x_4^2).
 \end{align*}
 When we multiply out this certificate, we group
 together like powers of $x_1,x_2,x_3,x_4$ as follows:
 \begin{align*}
 1 &= c_{1}x_1^4 + \cdots + c_{13}x_3^4 + \cdots + c_{8}x_2^3x_3 + \cdots  + (c_{22} + c_{21} + c_{27} + c_{32})x_1^2x_2 + \cdots \\
& \quad + (c_{35} + c_{45} + c_{50})x_4^2 + \cdots + (-c_{15} - c_{20} - c_{5} - c_{10}).
 \end{align*}
 Because the Nullstellensatz certificate is identically 1, this
 identity gives rise to the following system of linear equations: $0 = c_{1},
 0 = c_{13} , 0 = c_8, 0 = c_{22} + c_{21} + c_{27} + c_{32}, 0 =
 c_{35} + c_{45} + c_{50}, \dots, 1 = -c_{15} - c_{20} - c_{5} -
 c_{10}$~. In other words, we have a large-scale sparse 
 system of linear equations that consists only of 1s and $-1$s. We
 implemented an exact-arithmetic linear system solver.
In this example, it turns out that degree 1 is not sufficient for
generating a Nullstellensatz certificate --- that is, this linear
system has no solution. Ultimately, we discovered that degree four is
required, and we were able to produce the following certificate:
\small
\begin{align*}
1 &= (-x_1^3-1)(x_1^3-1)+\bigg(\frac{4}{9}x_4^4-\frac{5}{9}x_4^3x_2-\frac{2}{9}x_4^3x_3-\frac{4}{9}x_4^3x_1+\frac{2}{9}x_4^2x_2x_1+\frac{2}{9}x_4^2x_3x_1\bigg)(x_4^2+x_2x_4+x_2^2)\\
&\phantom{=} + \bigg(\frac{1}{9}x_4^4+\frac{2}{9}x_4^3x_2-\frac{1}{9}x_4^3x_1-\frac{2}{9}x_4^2x_2x_1 \bigg)(x_2^2+x_3x_2+x_3^2)  + \bigg(\frac{2}{9}x_4^4+\frac{1}{9}x_4^3x_2+\frac{1}{9}x_4^3x_1+\frac{2}{9}x_4^2x_2x_1 \bigg)(x_4^2+x_3x_4+x_3^2) \\
&\phantom{=} + \bigg(-\frac{2}{3}x_4^4+x_4^3x_1-x_4x_1^3+x_1^4\bigg)(x_4^2+x_1x_4+x_1^2)+\frac{1}{3}x_4^3x_2(x_2^2+x_1x_2+x_1^2) +\bigg(-\frac{1}{3}x_4^4-\frac{1}{3}x_4^3x_2\bigg)(x_3^2+x_1x_3+x_1^2).
\end{align*}
\normalsize
}
 \end{exm}

\begin{lemma}  If $\text{P} \neq \text{NP}$~, then there must exist an
infinite family of graphs whose minimum-degree non-3-colorability
Nullstellensatz certificates have unbounded growth with respect to the
number of vertices and edges in the graph.
 \end{lemma}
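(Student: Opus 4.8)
The plan is to argue by contraposition: I will show that if the minimum degree of a non-3-colorability Nullstellensatz certificate were bounded by an absolute constant over all graphs, then 3-colorability would be decidable in polynomial time, forcing $\text{P}=\text{NP}$. Fix the polynomial encoding of 3-colorability given by Lemma~\ref{bayerlem} with $k=3$: for a graph $G$ on $n$ vertices and $m$ edges this is a system of $n+m$ polynomials in $n$ variables, each of degree at most $3$, with integer coefficients. By the complex Nullstellensatz, $G$ is \emph{not} 3-colorable if and only if $1=\sum_i\alpha_i f_i$ for some $\alpha_i\in\C[x_1,\dots,x_n]$; conversely, if $G$ \emph{is} 3-colorable then Lemma~\ref{bayerlem} says the system has a solution, so no such identity can hold at any degree.

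The key point is the linear-algebra reduction already used in this section: for a fixed target degree $d$, deciding whether a certificate with $\deg\alpha_i\le d$ exists is a \emph{linear} feasibility problem. Its unknowns are the coefficients of the monomials of degree $\le d$ in $n$ variables, of which there are $\binom{n+d}{d}=O(n^d)$; it has one equation per monomial appearing after expansion, again $O(n^{d+3})$ of them; and its matrix entries are $0,\pm1$. Since the data are rational, the system is solvable over $\C$ if and only if it is solvable over $\Q$, by elementary linear algebra, and Gaussian elimination decides this in time polynomial in $n$ as long as $d$ is held fixed.

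Now suppose, for contradiction, that there is a constant $d_0$ such that every non-3-colorable graph admits a non-3-colorability certificate of degree at most $d_0$. Then the procedure ``does a degree-$\le d_0$ certificate exist for the Lemma~\ref{bayerlem} system of $G$?'' runs in polynomial time and answers \emph{yes} exactly when $G$ is not 3-colorable (by the assumption and the Nullstellensatz) and \emph{no} exactly when $G$ is 3-colorable (by Lemma~\ref{bayerlem}, no certificate exists at all). This decides an NP-complete problem in polynomial time, so $\text{P}=\text{NP}$, a contradiction. Hence no such $d_0$ exists: for every $d$ there is a non-3-colorable graph whose minimum-degree certificate has degree exceeding $d$. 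Since there are only finitely many graphs on a given number of vertices, unboundedness of the minimum certificate degree over all graphs forces it to be unbounded along a sequence of graphs of strictly increasing size, yielding the desired infinite family.

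I expect the only point requiring care to be the bookkeeping that makes the degree-$\le d$ certificate search genuinely of polynomial size --- namely that the dimension of the linear system is $n^{O(d)}$ and that exact (rational) linear algebra, as sketched earlier in this section, suffices to decide it --- together with the conversion from ``unbounded over all graphs'' to ``unbounded along a size-increasing sequence.'' Everything else is a direct application of the complex Nullstellensatz and Lemma~\ref{bayerlem}.
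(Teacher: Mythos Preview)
Your proposal is correct and follows essentially the same approach as the paper: both argue by contraposition/contradiction that a uniform degree bound would yield a polynomial-time decision procedure for 3-colorability via the linear-algebra certificate search, contradicting $\text{P}\neq\text{NP}$. Your version is arguably a bit more careful in two places the paper leaves implicit --- the $\C$-vs-$\Q$ solvability of the linear system and the passage from ``unbounded over all graphs'' to an infinite size-increasing family --- but the core argument and the size analysis of the linear system are the same.
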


\begin{proof} Our proof is by contradiction with the hypothesis $\text{P} \neq \text{NP}$~. Consider a
 non-3-colorable graph that has been encoded as the system of
 polynomial equations $(x_i^3 - 1) = 0$ for $i \in V(G)$~, and $
 (x_i^2 + x_ix_j +x_j^2) = 0$ for $\{i,j\} \in E(G)$~. Assume that every minimum-degree non-3-colorability
 Nullstellensatz certificate
has $\deg(\alpha_i) < d$ for some constant $d$~. We will show that $\text{P} = \text{NP}$ by providing a
polynomial-time algorithm for solving Graph-3-Coloring: (1) Given a graph $G$~, encode it as the above
system of polynomial equations, (2) Construct and solve the associated linear system for monomials of
degree $< d$~, (3) If the system has a solution, a Nullstellensatz certificate exists, and the graph is
non-3-colorable: Return \textbf{no}, (4) If the system does \textit{not} have a solution, there does
\textit{not} exist a Nullstellensatz certificate, and the graph is 3-colorable: Return \textbf{yes}.

Now we analyze the running time of this algorithm. In Step 1, our encoding has one polynomial equation
per vertex and one polynomial equation per edge. Since there are $O(n^2)$ edges in a graph, our polynomial
system has $n + n^2 = O(n^2)$ equations. Since every equation only contains coefficients $\pm 1$ and is of
degree three or less, encoding the graph as the above system of polynomial equations clearly runs in polynomial-time.

For Step 2, we note that by Corollary 3.2b of \cite{schrijver}, if a system of linear equations $Ax=b$ has a solution,
then it has a solution polynomially-bounded by the bit-sizes of the matrix $A$ and the vector $b$
(see \cite{schrijver} for a definition of bit-size). In this case, the vector $b$ contains only
zeros and ones. To calculate the bit-size of $A$~, we recall our assumption that, for every $\alpha_i$~,
$\deg(\alpha_i) < d$ for some constant $d$~. Therefore, an upper bound on the number of terms in each
$\alpha_i$ is the total number of monomials in $n$ variables of degree less than or equal to $d$~.
Therefore, the number of terms in each $\alpha_i$ is
\begin{align}
\binom{n + d -1}{n-1} + \binom{n + d -2}{n-1} + \cdots + \binom{n  -1}{n-1}
&= O(n^d) + O(n^{d-1}) + \cdots +O(1)
= O(n^d). \nonumber
\end{align}
Since there are $O(n^2)$ equations, there are at most $O(n^{d+2})$ unknowns in the linear system, and thus, $O(n^{d+2})$ columns in $A$~.
Since the vertex equations $(x_i^3 - 1 )= 0$ have two terms, and the edge
 equations $(x_i^2 + x_ix_j +x_j^2) = 0$ have 3 terms, there are $O(n^{d + 2})$ terms in the
 \textit{expanded} Nullstellensatz
certificate, and $O(n^{d + 2})$ rows in $A$~. Since entries in $A$ are $0,\pm 1$~, the matrix $A$
contains only entries of bit-size at most 2. Therefore, the bit-sizes of both $A$ and $b$ are
polynomially-bounded in $n$~, and by Theorem 3.3 of \cite{schrijver}, the linear system can be solved in polynomial-time.

Therefore, we have demonstrated a polynomial-time algorithm for solving Graph-3-Coloring, and
since Graph-3-Coloring is NP-Complete (\cite{garey_and_johnson}), this implies $\text{P} = \text{NP}$~,
which contradicts our hypothesis. Therefore, $\deg(\alpha_i) \nleq d$ for any constant $d$~. \end{proof}

Thus, in the linear algebra approach to finding a minimum-degree
 Nullstellensatz certificate, the existence of a
 universal constant bounding the degree is
 impossible under a well-known hypothesis of complexity theory.  Clearly, a similar result can be
 obtained for other encodings (for a generalized statement see \cite{susanthesis}). Note that the linear algebra method does not rely on any property that is
 \textit{unique} to a particular combinatorial or NP-Complete problem;
 the only assumption is that the problem can be \textit{represented}
 as a system of polynomial equations. We will use it to find
 Nullstellensatz certificates of non-3-colorability and sizes of
 stable sets of graphs.

We remark that this linear algebra method finds not only a Nullstellensatz certificate (if it
 exists), but it finds one of {\em minimum possible degree}.  With our
 implementation, we ran several experiments. We quickly found out that the
 systems of linear equations are numerically unstable, thus it is best
 to use exact arithmetic to solve them. The systems of linear
 equations are also quite large in practice, as the bound on the degree
 of the polynomial coefficients grows.  Thus we need ways to reduce
 the number of unknowns.

 We will not discuss here ad hoc methods that depend on the particular
 polynomial system at hand (see \cite{susanthesis} for methods
 specific to $3$-colorability), but one rather useful general trick is
 to randomly eliminate variables in the above procedure. Instead of
 allowing \emph{all} monomials of degree $\leq d$ to appear in the
 construction of the linear system of equations, we can randomly set
 unknowns in the linear system of equations to $0$ --- e.g., set each variable to 0 with probability $p$~,
 independently, to get a
 smaller system.

 \begin{figure}[h]
 \begin{center}
 % left->bottom->right->top
 \includegraphics[width=12cm, trim=0 0 0 0]{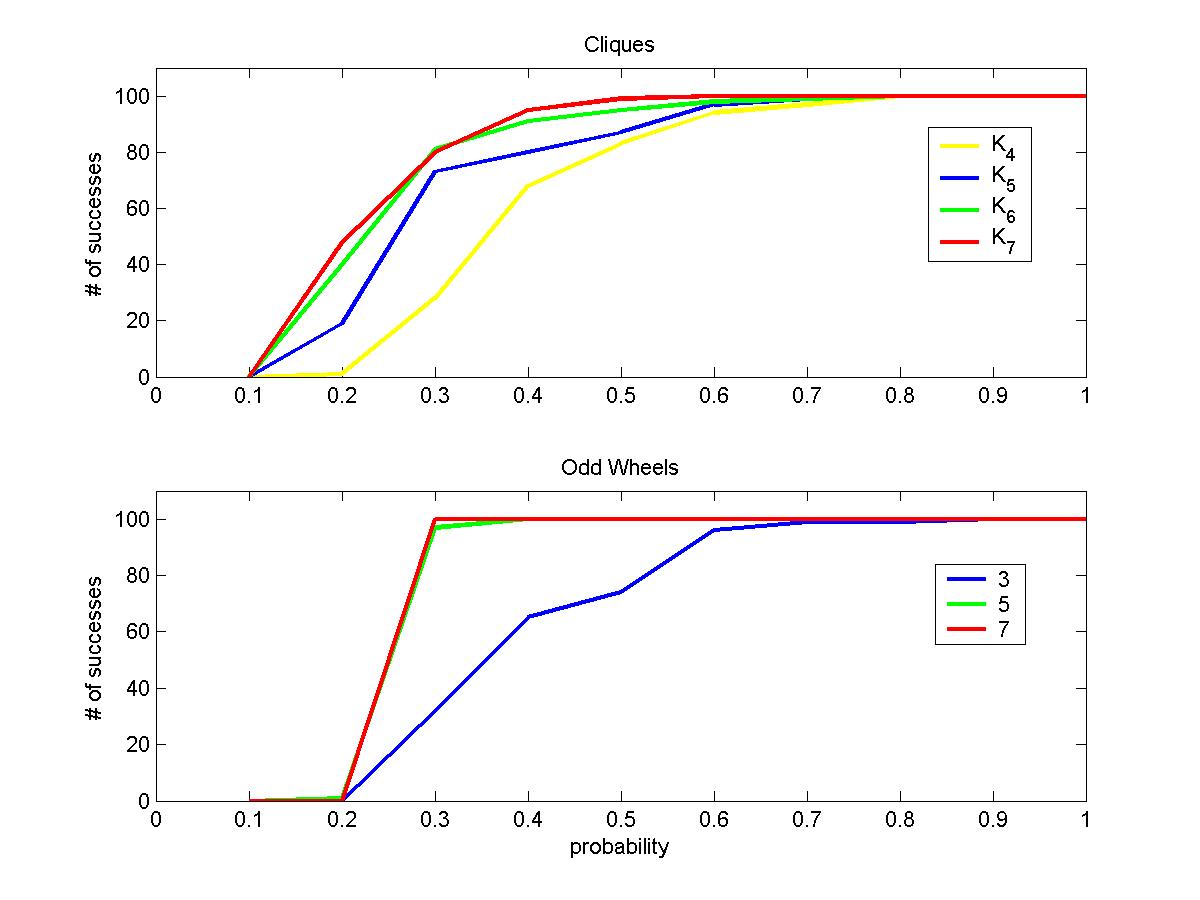}
 \caption{Probability tests on cliques and odd wheels.}
 \label{fig_prob_K_W}
 \end{center}
 \end{figure}

 In Figure \ref{fig_prob_K_W}, we see the results of the probabilistic
 search for smaller Nullstellensatz certificates. On the $x$-axis is
 the probability $p$ of keeping an unknown in the linear system. Thus,
 if $p = 0.1$~, 90\% of the time we set the unknown to 0, and only 10\%
 of the time, we keep it in the system. For the cliques and odd
 wheels, we know that there is always a certificate of degree four. For
 every probability $0.1, 0.2,\ldots,1$ we performed 100 searches for a
 degree four certificate. For both the cliques and the odd wheels, at
 $p=0.1$ and $p=0.2$~, we almost never found a certificate. But for
 $p=0.4$~, we found certificates 95\% of the time. In practice this
 idea is useful. We can reduce the number of variables in the linear
 system by 60\%, and still find a Nullstellensatz certificate 90\% of
 the time. We will report the results of specific computations in Section
 \ref{HilbertNcolor}.

 \subsection{The Nullstellensatz and Stable Sets of Graphs} \label{HilbertNstable}

Lov\'asz \cite{lovasz1} stated the challenge of finding an explicit family of graphs with growth in the degree of Nullstellensatz certificates. Here we solve his challenge. Our main result is stated in Theorem 1.2: For every graph $G$~, there
exists a Nullstellensatz certificate of degree $\alpha(G)$~, the
stability number of $G$~, certifying that $G$ has no stable set of
size $> \alpha(G)$~. Moreover, this is the minimum possible degree
for all graphs. In what follows, for any graph $G$ with stability
number $\alpha(G)$ and integer $r \geq 1$~, we associate the infeasible
system of polynomial equations $J(G,r)$~:
\begin{eqnarray}
 && -(\alpha(G)+r) + \sum_{i=1}^n x_i=0~, \label{c}\\
 &&x_i^2-x_i=0, \mbox{ for every node } i \in V(G)~,\label{b}\\
 &&x_ix_j=0, \hbox{ for every edge } \{i,j\} \in E(G)~.\label{a}
 \end{eqnarray}
Thus, the Nullstellensatz certificate will have the general form:
\begin{equation} \label{stableHN}
  1= A \bigg( -(\alpha(G)+r) + \sum_{i=1}^n x_i \bigg) + \sum_{i \in V(G)} Q_i(x_i^2-x_i)+ \sum_{\{i,j\} \in E(G)} Q_{ij} (x_ix_j)~.
\end{equation}
In the rest of this section, we will refer to the coefficient polynomials using
these particular letters (i.e. $A, Q_i$~, etc).

\begin{defi} A Nullstellensatz certificate (Eq. \ref{stableHN})
\textit{ has degree $d$} if $\max\{deg(A),deg(Q_i),deg(Q_{ij})\}=d$~.
\end{defi}

\begin{lemma} \label{reduced} For any graph $G$ and a
Nullstellensatz certificate
\begin{align}
1 &= A \underbrace{\bigg(-(\alpha(G)+r) + \sum_{i=1}^n x_i \bigg)}_{B} + \sum_{i \in V(G)}
Q_i (x_i^2-x_i)+ \sum_{\{i,j\} \in E(G)} Q_{ij} (x_ix_j) \label{reduction_lemma_orig_null}
\end{align}
certifying that $G$ has no stable set of size $(\alpha(G)+r)$ (with $r \geq 1$), we can
construct a ``reduced'' Nullstellensatz certificate
\begin{align*}
1 &= A'\bigg(-(\alpha(G)+r) + \sum_{i=1}^n x_i \bigg) + \sum_{i \in V(G)}
Q'_i (x_i^2-x_i)+ \sum_{\{i,j\} \in E(G)} Q'_{ij} (x_ix_j),
\end{align*}
such that
\begin{enumerate}
 \renewcommand{\theenumi}{\arabic{enumi}.}

\item The coefficient $A'$ multiplying
$-(\alpha(G)+r) + \sum_{i=1}^n x_i$ has only square-free monomials
supported on stable sets of $G$~, and thus $\deg(A') \leq \alpha(G)$~.

\item $\max\{deg(A),deg(Q_i),deg(Q_{ij})\}=\max\{deg(A'),deg(Q'_i),
deg(Q'_{ij})\}$~. Thus, if the original Nullstellensatz certificate
has  minimum-degree, the ``reduced" certificate also has
minimum-degree.
\end{enumerate}
\end{lemma}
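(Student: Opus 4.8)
The plan is to replace $A$ by its \emph{normal form} modulo the ideal $I_G:=\langle x_i^2-x_i : i\in V(G)\rangle+\langle x_ix_j : \{i,j\}\in E(G)\rangle$ and to push the resulting remainder into the vertex and edge coefficients. The generating set $\{x_i^2-x_i\}_{i\in V(G)}\cup\{x_ix_j\}_{\{i,j\}\in E(G)}$ is a Gr\"obner basis of $I_G$ for any term order: its leading monomials are the $x_i^2$ and the $x_ix_j$, and every $S$-pair reduces to zero — those arising from variable-disjoint leading monomials by Buchberger's first criterion, and the $S$-pair of $x_ix_j$ with $x_i^2-x_i$ reducing to $-x_ix_j$ and hence to $0$. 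Dividing $A$ by this basis therefore yields a remainder $A'$ supported on the standard monomials, which are precisely the square-free products $\prod_{i\in S}x_i$ for $S$ an independent set of $G$; in particular $\deg(A')\le\alpha(G)$, giving Claim~1. (Equivalently, $A'$ is obtained elementarily by replacing $x_i^2$ by $x_i$ in each monomial until it is square-free and then deleting every monomial divisible by an edge product.)

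The division records an identity $A=A'+\sum_{i\in V(G)}R_i(x_i^2-x_i)+\sum_{\{i,j\}\in E(G)}R_{ij}(x_ix_j)$. The crucial degree observation is that each reduction step rewrites a monomial $\mu$ of the current remainder — of degree at most $\deg(A)$ — using a quotient term of degree $\deg(\mu)-2\le\deg(A)-2$; accumulating, $\deg(R_i)\le\deg(A)-2$ and $\deg(R_{ij})\le\deg(A)-2$, while $\deg(A')\le\deg(A)$. Substituting this expression for $A$ into the certificate (\ref{reduction_lemma_orig_null}), with $B:=-(\alpha(G)+r)+\sum_{i=1}^n x_i$, yields purely algebraically the valid certificate
\[
1=A'B+\sum_{i\in V(G)}\big(Q_i+R_iB\big)(x_i^2-x_i)+\sum_{\{i,j\}\in E(G)}\big(Q_{ij}+R_{ij}B\big)(x_ix_j),
\]
so the reduced coefficients are $A'$, $Q'_i:=Q_i+R_iB$, and $Q'_{ij}:=Q_{ij}+R_{ij}B$. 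Since $\deg(B)=1$ we have $\deg(R_iB),\deg(R_{ij}B)\le\deg(A)-1$, whence $\deg(Q'_i)\le\max\{\deg(Q_i),\deg(A)\}$, $\deg(Q'_{ij})\le\max\{\deg(Q_{ij}),\deg(A)\}$, and $\deg(A')\le\deg(A)$; thus $\max\{\deg(A'),\deg(Q'_i),\deg(Q'_{ij})\}\le\max\{\deg(A),\deg(Q_i),\deg(Q_{ij})\}$.

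It remains to get the reverse inequality of Claim~2. If the original maximum is attained by some $Q_i$ or $Q_{ij}$, then because the term $R_iB$ (resp.\ $R_{ij}B$) added to it has strictly smaller degree, that maximal degree survives in the reduced certificate, and we are done. Otherwise the maximum is attained only by $\deg(A)$; if $\deg(A')=\deg(A)$ we are again done, and if $\deg(A')<\deg(A)$ then the reduced certificate — being a bona fide Nullstellensatz certificate for the same infeasible system $J(G,\alpha(G)+r)$ — has strictly smaller degree than the original, so the original was not of minimum degree. Hence whenever the original certificate has minimum degree the two maxima agree and the reduced one is again of minimum degree. I expect the main obstacle to be exactly this degree accounting: verifying that reducing $A$ modulo $I_G$ never creates monomials of degree above $\deg(A)$, and that folding the quotient cofactors $R_i,R_{ij}$ (multiplied by the degree-one polynomial $B$) into $Q_i,Q_{ij}$ stays inside the original degree budget.
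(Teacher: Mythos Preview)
Your approach is essentially the same as the paper's: both reduce $A$ modulo the ideal $I_G=\langle x_i^2-x_i,\ x_ix_j\rangle$ and absorb the cofactors (multiplied by $B$) into the $Q_i$ and $Q_{ij}$. The paper describes this monomial-by-monomial without the Gr\"obner vocabulary, while you verify explicitly that the generators form a Gr\"obner basis and hence that the standard monomials are exactly the square-free stable-set monomials; the content is identical.

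Your handling of Claim~2 is in fact more careful than the paper's. The paper asserts the equality $\max\{\deg A,\deg Q_i,\deg Q_{ij}\}=\max\{\deg A',\deg Q'_i,\deg Q'_{ij}\}$ and justifies it only with the phrase ``the degree is maintained''; but as your case analysis implicitly reveals, the literal equality can fail for a non-minimum-degree certificate (one can pad $A$ with a high-degree multiple of $x_i^2-x_i$ and compensate in $Q_i$, producing a certificate whose reduction has strictly smaller degree). What both arguments actually establish is the inequality $\max'\le\max$, and hence that a minimum-degree certificate reduces to a minimum-degree certificate --- which is all that is used downstream in Propositions~\ref{ss_prop_1_x_d_i}, \ref{ss_prop_2_subset} and Theorem~\ref{thm_ss_cert_min}. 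Your sharper bound $\deg(R_iB),\deg(R_{ij}B)\le\deg(A)-1$ (versus the paper's $\le\deg(A)$) is what lets you cleanly dispose of the case where the maximum is attained by some $Q_i$ or $Q_{ij}$.
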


\begin{proof} Let $I$ be the ideal generated by $
x_i^2-x_i$ (for every node  $i \in V(G)$), and $x_ix_j$ ( for every
edge $\{i,j\} \in E(G)$). We apply reductions modulo $I$ to Eq.
\ref{reduction_lemma_orig_null}. If a non-square-free monomial
appears in polynomial $A$~, say
$x_{i_1}^{\alpha_1}x_{i_2}^{\alpha_2}\cdots x_{i_k}^{\alpha_k}$ with
at least one $\alpha_j>1$~, then we can subtract the polynomial
$x_{i_1}^{\alpha_1}x_{i_2}^{\alpha_2}\cdots,x_{i_j}^{\alpha_j-2}x_{i_k}^{\alpha_k}
B(x_{i_j}^2-x_{i_j})$ from $AB$ and simultaneously add it to $\sum
Q_{s}(x_{s}^2-x_{s})$~. Thus, eventually we obtain a new certificate
that has only square-free monomials in $A'$~. Furthermore, if
$Q'_{s}$ has new monomials, they are of degree less than or equal to
what was originally in $A$~.

Similarly, if $x_{i_1}x_{i_2}\cdots x_{i_k}$ appears in $A$~,
but $x_{i_1}x_{i_2}\cdots x_{i_k}$ contains an edge $\{i,j\}
\in E(G)$ (if $x_ix_j$ divides $x_{i_1}x_{i_2}\cdots
x_{i_k}$), then we can again subtract
$B(x_{i_1}x_{i_2}\cdots x_{i_k}/x_ix_j)(x_ix_j)$ from $AB$~, and, at the
same time, add it to $\sum_{\{i,j\} \in E(G)}Q_{ij} x_ix_j$~. Furthermore, the
degree is maintained, and we have reached the form we claim exists
for $A'$~. \end{proof}

We now show that, for \textit{every} graph, there exists an
explicit Nullstellensatz certificate of degree $\alpha(G)$~.

\begin{theorem} Given a graph $G$~, there exists a Nullstellensatz certificate of
degree $\alpha(G)$ certifying the non-existence of stable
sets of size greater than $\alpha(G)$~. \label{thm_ss_cert_exists}
%$(\alpha(G) + r)$ (with $r \geq 1)$~.
\end{theorem}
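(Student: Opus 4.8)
The plan is to reduce the construction to a one-variable computation. Work in the quotient $R := \C[x_1,\dots,x_n]/I$, where $I := \langle\, x_i^2-x_i\ (i\in V),\ x_ix_j\ (\{i,j\}\in E)\,\rangle$. This ideal is radical, and $\text{variety}(I)$ is exactly the set of $0/1$ incidence vectors $\chi_S$ of the stable sets $S$ of $G$ (including $S=\emptyset$); hence the square-free monomials $x^S:=\prod_{i\in S}x_i$, $S$ stable, span $R$ — via the degree-non-increasing rewritings $x_i^2\mapsto x_i$ and $\prod_{i\in W}x_i\mapsto 0$ whenever $W\subseteq V$ contains an edge — and are linearly independent modulo $I$ by M\"obius inversion on the relations $\sum_{S\subseteq T}c_S=0$ obtained by evaluating at the points $\chi_T$. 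Since $|S|\le\alpha(G)$ for every stable set, every element of $R$ has a representative of degree $\le\alpha(G)$. In particular the linear form $s:=\sum_{i=1}^n x_i$ takes on $\text{variety}(I)$ only the values $0,1,\dots,\alpha(G)$ (it equals $|S|$ at $\chi_S$), so $p(s):=\prod_{k=0}^{\alpha(G)}(s-k)$ vanishes on $\text{variety}(I)$ and, $I$ being radical, $p\!\left(\sum_i x_i\right)\in I$.

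First I would pin this membership down quantitatively: because $p(\sum_i x_i)$ has degree $\alpha(G)+1$, the same degree-non-increasing reduction to the stable-set normal form that underlies Lemma \ref{reduced} writes it as $p\!\left(\sum_i x_i\right)=\sum_{i\in V}Q_i(x_i^2-x_i)+\sum_{\{i,j\}\in E}Q_{ij}x_ix_j$ with $\deg\!\big(Q_i(x_i^2-x_i)\big)\le\alpha(G)+1$ and $\deg\!\big(Q_{ij}x_ix_j\big)\le\alpha(G)+1$, i.e. $\deg Q_i,\deg Q_{ij}\le\alpha(G)-1$. Next comes the one-variable B\'ezout step. Put $\lambda:=\prod_{k=0}^{\alpha(G)}(\alpha(G)+r-k)$, which is nonzero since each factor is $\ge r\ge 1$. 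The univariate polynomial $p(T)-\lambda$ vanishes at $T=\alpha(G)+r$, so it is divisible by $T-(\alpha(G)+r)$ with quotient $q(T)$ monic of degree $\alpha(G)$; thus $q(T)\big(T-(\alpha(G)+r)\big)=p(T)-\lambda$. Substituting $T=\sum_i x_i$ and setting $A:=-\tfrac{1}{\lambda}\,q\!\left(\sum_{i=1}^n x_i\right)$, a polynomial of degree exactly $\alpha(G)$, gives
$$A\Bigl(-(\alpha(G)+r)+\sum_{i=1}^n x_i\Bigr)=1-\tfrac1\lambda\,p\!\left(\sum_{i=1}^n x_i\right).$$
Combining with the previous display and rearranging yields
$$1=A\Bigl(-(\alpha(G)+r)+\sum_{i=1}^n x_i\Bigr)+\sum_{i\in V(G)}\tfrac1\lambda Q_i(x_i^2-x_i)+\sum_{\{i,j\}\in E(G)}\tfrac1\lambda Q_{ij}\,(x_ix_j),$$
which is a Nullstellensatz certificate of the form Eq. \ref{stableHN} for $J(G,r)$ whose coefficient polynomials have degrees $\alpha(G)$, $\le\alpha(G)-1$, $\le\alpha(G)-1$; hence the certificate has degree $\alpha(G)$. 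If desired, Lemma \ref{reduced} then replaces $A$ by a polynomial whose monomials are square-free and supported on stable sets of $G$ without changing the degree, which is the shape used in the matching lower bound.

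I do not expect a serious obstacle: the only delicate points are (i) the membership $p(\sum_i x_i)\in I$, which is immediate from radicality of $I$, and (ii) the bound $\deg Q_i,\deg Q_{ij}<\alpha(G)$, which is the elementary ``reduce to the stable-set normal form without raising degree'' argument already isolated in Lemma \ref{reduced}. The single idea that makes everything go through is recognizing that, modulo $I$, the form $\sum_i x_i$ behaves like one variable constrained to $\{0,1,\dots,\alpha(G)\}$, turning the task into inverting $T-(\alpha(G)+r)$ modulo $\prod_{k=0}^{\alpha(G)}(T-k)$ — a one-line B\'ezout identity.
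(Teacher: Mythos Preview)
Your proof is correct and takes a genuinely different route from the paper's. The paper constructs the certificate explicitly and combinatorially: it sets $A=-\sum_{i=0}^{\alpha(G)} C_i^G\,\mathscr{P}(i,G)$, where $\mathscr{P}(i,G)$ is the sum of all square-free stable-set monomials of degree $i$ and the scalars $C_i^G$ are defined by the recursion $C_i^G=\dfrac{i\,C_{i-1}^G}{\alpha(G)+r-i}$, then builds $Q_i,Q_{ij}$ by an algorithm whose correctness is verified degree-by-degree via a loop invariant. Your argument instead leverages the algebraic structure directly: since $I$ is radical with variety the stable-set incidence vectors, $p\!\left(\sum_i x_i\right)=\prod_{k=0}^{\alpha(G)}\bigl(\sum_i x_i-k\bigr)$ lies in $I$, and a one-variable B\'ezout identity inverts $T-(\alpha(G)+r)$ modulo $p(T)$. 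This is shorter and more conceptual, and it makes transparent why degree $\alpha(G)$ suffices (it is $\deg p-1$). The paper's construction, by contrast, already delivers $A$ in the reduced stable-set-monomial form used in the subsequent lower-bound argument (Propositions~\ref{ss_prop_1_x_d_i} and~\ref{ss_prop_2_subset}), with explicit coefficients on each stable set; your $A=-\tfrac{1}{\lambda}\,q\!\left(\sum_i x_i\right)$ is a symmetric polynomial in the $x_i$ and would require the reduction of Lemma~\ref{reduced} to reach that shape. Both approaches are fully constructive, and your degree bound $\deg Q_i,\deg Q_{ij}\le\alpha(G)-1$ is justified since the generators $x_i^2-x_i,\ x_ix_j$ form a Gr\"obner basis for $I$ under any graded order, so the division algorithm expresses any $f\in I$ with coefficient degrees at most $\deg f-2$.
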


\begin{proof}
The proof is an algorithm to construct the explicit Nullstellensatz
certificate for the non-existence of a stable set of size $\alpha(G) + r$, with $r \geq 1$~.
First, let us establish some notation: Let $S(i,G)$ be the
set of all stable sets of size $i$ in $G$~. We index nodes in the
graph by integers, thus stable sets in $G$ are subsets of
integers. When we refer to a monomial $x_{d_1}x_{d_2}\cdots x_{d_i}$
as a ``stable set", we mean $\{d_1,\ldots,d_i\}
\in S(i, G)$~.  We use ``hat" notation to remove a variable
from a monomial, meaning $x_{d_1}x_{d_2}\cdots x_{d_{i-1}}\widehat{x_{d_i}}x_{d_{i+1}}\cdots x_{d_k} =
x_{d_1}x_{d_2}\cdots x_{d_{i-1}}x_{d_{i+1}}\cdots x_{d_k}$~.
Finally, let
\begin{align*}
\mathscr{P}(i, G) &:= \sum_{\{d_1, d_2,\ldots,d_i\} \in S(i,G)}x_{d_1}x_{d_2}\cdots x_{d_i}~, \quad \quad \quad \text{and} \quad \mathscr{P}(0, G) := 1~.
\end{align*}
When $\mathscr{P}(i, G)$
is ordered lexicographically, we denote $\mathscr{P}_j(i,G)$ as the
$j$-th term. We also define the constants
\begin{align*}
C_i^G &:= \frac{iC_{i-1}^G}{\alpha(G) + r - i}~, \quad \quad \text{and}  \quad C_0^G := \frac{1}{\alpha(G) + r}~.
\end{align*}

Our main claim is that we can construct explicit coefficients $A, Q_i, Q_{ij}$ of degree less than or
equal to $\alpha(G)$ such that the following identity is satisfied
\begin{align*}
1 &=A \underbrace{
\bigg(-(\alpha(G) + r) + \sum_{i=1}^n x_i \bigg)}_{B} + \underbrace{\sum_{\{i,j\} \in E(G)}Q_{ij}x_ix_j + \sum_{i=1}^nQ_i(x_i^2 - x_i)}_{C}~.
\end{align*}
We do so by the algorithm of Figure \ref{NCC}.
In Figure \ref{NCC} and in what follows, $AB$ and $C$ refer to parts of the equation as marked above.

\begin{figure}
\begin{tabbing}
***\=***\=***\=***\=***\=***\=***\=***\=***\=***\=***\=***\=********\\
$\text{ALGORITHM (Nullstellensatz certificate construction)}$\\
$\text{INPUT: A graph}\ G(V,E), \ \text{associated polynomials $\mathscr{P}(i,G)$~,}\ \alpha(G),r $\\
$\text{OUTPUT: polynomials }(A, Q_i, Q_{ij}) \ \text{such that}\ AB+C=1 \text{ is true} $\\
0 \> $A \leftarrow -\sum_{i=0}^{\alpha(G)} C_i^G \mathscr{P}(i, G)$\\
1 \> $Q_i \leftarrow 0$~, for $i=1\ldots |V|$\\
2 \> $Q_{ij} \leftarrow 0$~, for $\{i,j\} \in E(G)$\\
3\> \textbf{for} $i \leftarrow 0 \textbf{ to } \alpha(G)$\\
4\>\> \textbf{for} $j \leftarrow 1 \textbf{ to } \text{\# of monomials in $\mathscr{P}(i,G)$}$\\
5\>\>\> \textbf{for} $k \leftarrow 1 \textbf{ to } |V|$\\
6\>\>\>\> Let $\mathscr{P}_j(i, G) = x_{d_1}x_{d_2}\cdots x_{d_{i}}$\\
7\>\>\>\> \textbf{if} $\mathscr{P}_j(i, G)x_k$ is a square-free stable set\\
8\>\>\>\>\> (rule 1) $Q_{k} \leftarrow Q_{k} + C_{i+1}^Gx_{d_1}x_{d_2}\cdots x_{d_{i}}$\\
9\>\>\>\> \textbf{else if} $\mathscr{P}_j(i, G)x_k$ is a square-free non-stable set\\
10\>\>\>\>\>Choose an edge $l$, $1\le l\le i$, so that $(k,d_l)$ is an edge of $G$ \\
12\>\>\>\>\>(rule 2) $Q_{kd_l} \leftarrow Q_{kd_l} + C_i^Gx_{d_1}x_{d_2}\cdots x_{d_{l-1}}\widehat{x_{d_l}}x_{d_{l+1}}\cdots x_{d_{i}}$\\
13\>\>\>\> \textbf{end if}\\
14\>\>\> \textbf{end for}\\
15\>\> \textbf{end for}\\
16\> \textbf{end for}\\
17\> \textbf{return} $A, Q_i, Q_{ij}$\\
********************************************
\end{tabbing}
\caption{Nullstellensatz certificate construction}\label{NCC}
\end{figure}

It is clear that the algorithm terminates for any finite graph,
since the number of iterations of each  \textbf{for} loop is finite.
To prove correctness of the algorithm, we demonstrate that the following
statement is true for each iteration of the \textbf{for}
loop beginning in line 3:

\begin{align*}
&\hbox{Prior to the $m$-th iteration, $AB+C - 1$ only contains terms of degree $m+1$ or greater.}\atop
\hbox{Furthermore, all terms of degree $m+1$ are square-free.}\tag{$*$}
\end{align*}

When the \textbf{for} loop is initialized, $A$ is set to the
polynomial $-\sum_{i=0}^{\alpha(G)} C_i^G \mathscr{P}(i, G)$~, and
$Q_i, Q_{ij}$ are set to
zero. Therefore, prior to the $0$-th iteration, $C = 0$ and $AB + C -
1 = AB - 1$~. Since the constant term in $AB$ is equal to
\begin{align*}
-C_0^G\mathscr{P}(0,G)\big( -(\alpha(G) + r)\big) &= -\frac{1}{\alpha(G) + r}\bigg( -(\alpha(G) + r) \bigg) = 1~,
\end{align*}
this implies that $AB - 1$ only contains terms of degree 1 or higher.
Furthermore, linear terms are by definition square-free.
Thus, $(*)$ is true at initialization.

Now we must show that if $(*)$ is true prior to the $m$-th
iteration of the \textbf{for} loop, then it will be true prior to the
$(m + 1)$-th iteration. Assume then, that prior to the $m$-th
iteration, $AB + C - 1$ only contains terms of degree $m+1$ or
greater, and that all terms of degree $m+1$ are square-free. We must
show that prior to the $(m+1)$-th iteration, $AB+C-1$ only contains
terms of degree $m+2$ or greater, and furthermore, that all terms of
degree $m+2$ are square-free. Prior to the $m$-th iteration, there
are only two kinds of terms of degree $m+1$ in $AB + C -1$~: (1) terms corresponding to square-free, stable
sets, and (2) terms corresponding to square-free, non-stable sets. We
will show that both kinds of terms cancel during the $m$-th
iteration.
\begin{itemize}
\item Let $x_{d_1}x_{d_2}\cdots x_{d_{m+1}}$ be any square-free, stable
set monomial in $AB + C - 1$ of degree $(m+1)$~. Since
$\{d_1,d_2,\ldots,d_{m+1}\}$ is a stable set, all subsets of
size $m$ are likewise stable sets and appear as summands in
$\mathscr{P}(m,G)$~. Consider the coefficient of $x_{d_1}x_{d_2}\cdots
x_{d_{m+1}}$ in $C$~. During the $m$-th iteration, we apply rule 1 and
create this monomial $\binom{m+1}{m}$ times in $C$~. Since this
monomial is created by the multiplication of $x_{d_1}x_{d_2}\cdots
\widehat{x_{d_{k}}} \cdots x_{d_{m+1}}$ with $(x^2_{d_k} - x_{d_k})$~,
the coefficient for $x_{d_1}x_{d_2}\cdots x_{d_{m+1}}$ in $C$ is
\begin{align*}
-\binom{m+1}{m}C_{m+1}^G &= -(m+1)C_{m+1}^G
\end{align*}
Now we will calculate the coefficient of this same monomial in $AB$~.
This monomial is created in two ways, (1) multiplying
$-C_m^Gx_{d_1}x_{d_2}\cdots
x_{d_{j-1}}\widehat{x_{d_j}}x_{d_{j+1}}\cdots x_{d_{m+1}}$ by
$x_{d_j}$ (repeated $(m+1)$ times, once for each $x_{d_j}$), or (2)
multiplying $-C_{m+1}^Gx_{d_1}x_{d_2}\cdots x_{d_{m+1}}$ by
$-(\alpha(G) + r)$ (occurring exactly once). Therefore, the
coefficient of $x_{d_1}x_{d_2}\cdots x_{d_{m+1}}$ in $AB$ is
\begin{align*}
&-C_{m+1}^G \big( -(\alpha(G) + r)\big) - (m+1)C_m^G\\
&\qquad=\alpha(G)C_{m+1}^G + rC_{m+1}^G -\frac{(m+1)C_m^G}{\alpha(G) + r - (m+1)}(\alpha(G) + r - (m+1))\\
&\qquad= \alpha(G)C_{m+1}^G + rC_{m+1}^G -C_{m+1}^G\big(\alpha(G) + r - (m+1)\big)\\
&\qquad= \alpha(G)C_{m+1}^G + rC_{m+1}^G -\alpha(G)C_{m+1}^G - rC_{m+1}^G + (m+1)C_{m+1}^G\\
&\qquad= (m+1)C_{m+1}^G~.
\end{align*}
Therefore, the coefficient for any square-free stable-set monomial in $AB + C - 1$ is
\begin{align*}
\underbrace{(m+1)C_{m+1}^G}_{\text{from $AB$}} \underbrace{-(m+1)C_{m+1}^G}_{\text{from $C$}} &= 0~.
\end{align*}

\item
Now, consider any square-free non-stable-set monomial
$x_{d_1}x_{d_2}\cdots x_{d_{m+1}}$ in $AB + C - 1$ of degree
$m+1$~. Consider all $\binom{m+1}{m}$ subsets of
$\{d_1,d_2,\ldots,d_{m+1}\}$~, and let $M$ be the number of stable sets among those $\binom{m+1}{m}$ subsets.
Each of those $M$
subsets appears as a summand in $\mathscr{P}(m, G)$~. Therefore, the
monomial $x_{d_1}x_{d_2}\cdots x_{d_{m+1}}$ is created $M$ times in
$AB$~, and $M$ times in $C$~, by $M$ applications of rule 2~. Therefore,
the coefficient for any square-free non-stable set
monomial $x_{d_1}x_{d_2}\cdots x_{d_{m+1}}$ in $AB + C - 1$ is
\begin{align*}
\underbrace{-MC_{m}^G}_{\text{from $AB$}} + \underbrace{MC_{m}^G}_{\text{from $C$}} &= 0~.
\end{align*}
\end{itemize}

Finally, consider any non-square-free monomial
$x_{d_1}x_{d_2}\cdots x^2_{d_l}\cdots x_{d_{m+1}}$ in $AB + C - 1$ of
degree $m+2$~.  We note that $\{d_1, d_2, \ldots, d_{m+1}\}$ is an
stable set. To see this, note that every non-square-free
monomial in $AB$ is created by the product of an stable set with
a linear term, and every non-square-free monomial in $C$ is
created by the product of an stable set with $(x_{d_l}^2 -
x_{d_l})$ for some $d_l$~. During the $m$-th iteration, during
applications of rule 1, $x_{d_1}x_{d_2}\cdots \widehat{x_{d_l}}\cdots
x_{d_{m+1}}$ is added to $Q_{d_l}$~. Therefore, when $Q_{d_l}$ is
subsequently multiplied by $(x_{d_l}^2 - x_{d_l})$~, the monomial
$x_{d_1}x_{d_2}\cdots x^2_{d_l}\cdots x_{d_{m+1}}$ is created. To
summarize, this monomial is created in only one way in $AB$ and only
one way in $C$~. Therefore, the coefficient for $x_{d_1}x_{d_2}\cdots
x^2_{d_l}\cdots x_{d_{m+1}}$ in $AB + C - 1$ is
\begin{align*}
\underbrace{-C_{m+1}^G}_{\text{from $AB$}} + \underbrace{C_{m+1}^G}_{\text{from $C$}} &= 0~.
\end{align*}

Therefore, we have proven that $(*)$ is valid prior to every
iteration of the \textbf{for} loop. Finally, upon termination, $i = \alpha(G) +
1$~. Therefore, by $(*)$, we know that $AB + C - 1$ only
contains monomials of degree $\alpha(G) + 2$ or greater. However,
there are no terms of degree $\alpha(G) + 2$ in $AB$ since $\deg(AB)
= \alpha(G) + 1$~.  Additionally, during the $\alpha(G)$-th iteration,
$\mathscr{P}(\alpha(G),G)x_k$ is never an stable set. Therefore,
only applications of rule 2 occur during the last iteration, and the
final degrees of $Q_i,Q_{ij}$ are less than or equal to $\alpha(G) -
1$~. Therefore, the monomial in $C$ of greatest degree is of degree $\alpha(G) +
1$~. Thus, there are no monomials in $AB + C - 1$ of degree
$\alpha(G) + 2$ or greater, and upon termination, $AB + C - 1 = 0$~.

We have then shown that we can construct $A, Q_i, Q_{ij}$ such that
\begin{align*}
1 &= \underbrace{
\bigg(-\sum_{i=0}^{\alpha(G)} C(i, G) \mathscr{P}(i, G) \bigg)}_{A}
\underbrace{
\bigg(-(\alpha(G) + r) + \sum_{i=1}^n x_i \bigg)}_{B} + \underbrace{\sum_{\{i,j\} \in E(G)}Q_{ij}x_ix_j + \sum_{i=1}^nQ_i(x_i^2 - x_i)}_{C}~.
\end{align*}
Since $\deg(Q_i), \deg(Q_{ij}) \leq (\alpha(G) - 1)$~, and $\deg(A) = \alpha(G)$~, this concludes our proof. \end{proof}

\begin{exm}
\emph{
We display a certificate found by our algorithm. In Figure \ref{T53} is
the Tur\'an graph $T(5,3)$~. It is clear that $\alpha(T(5,3)) =
2$~. Therefore, we ``test'' for a stable set of size 3.
\begin{figure}[h]
\begin{center}
% left->bottom->right->top
\includegraphics[width=4cm, trim=0 0 0 0]{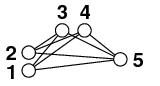}
\end{center}
\caption{Tur\'an graph $T(5,3)$}\label{T53}
\end{figure}\\
The certificate constructed by our algorithm is
\begin{align*}
1 &= \bigg(-\frac{1}{3}\big(x_1x_2 + x_3x_4\big) - \frac{1}{6}\big(x_1 + x_2 + x_3 + x_4 + x_5\big) - \frac{1}{3} \bigg)(x_1 +  x_2 + x_3 + x_4 + x_5 - 3)+\\
& \hspace{14pt} \bigg(\frac{1}{3}x_4 + \frac{1}{3}x_2 + \frac{1}{3}\bigg)x_1x_3 + \bigg(\frac{1}{3}x_2 + \frac{1}{3}\bigg)x_1x_4 + \bigg(\frac{1}{3}x_2 + \frac{1}{3}\bigg)x_1x_5 + \bigg(\frac{1}{3}x_4 + \frac{1}{3}\bigg)x_2x_3 + \\
& \hspace{14pt}\bigg(\frac{1}{3}\bigg)x_2x_4 + \bigg(\frac{1}{3}\bigg)x_2x_5 + \bigg(\frac{1}{3}x_4 + \frac{1}{3}\bigg)x_3x_5 + \bigg(\frac{1}{3}\bigg)x_4x_5 + \bigg(\frac{1}{3}x_2 + \frac{1}{6}\bigg)(x_1^2 - x_1) + \\
&\hspace{14pt} \bigg(\frac{1}{3}x_1 + \frac{1}{6}\bigg)(x_2^2 - x_2) + \bigg(\frac{1}{3}x_4 + \frac{1}{6}\bigg)(x_3^2 - x_3) + \bigg(\frac{1}{3}x_3 + \frac{1}{6}\bigg)(x_4^2 - x_4) + \bigg(\frac{1}{6}\bigg)(x_5^2 - x_5)~.
\end{align*}
}
\end{exm}

Note that the coefficient for the stable set polynomial
contains every stable set, and further note that every monomial in
every coefficient is also indeed a stable set.

We will now prove that the stability number $\alpha(G)$ is the minimum-degree for any
Nullstellensatz certificate for the non-existence of a stable set
of size greater than $\alpha(G)$~. To prove this, we rely on two
propositions.

\begin{proposition} Given a graph $G$~, let $M = \{d_1,d_2,\ldots,d_{|M|}\}$ be any maximal stable set in $G$~. Let
\begin{align*}
1 &= A\bigg(-(\alpha(G) + r) + \sum_{i=1}^n x_i\bigg) + \sum_{\{i,j\} \in E(G)}Q_{ij}x_ix_j + \sum_{i=1}^nQ_i(x_i^2 - x_i)
\end{align*}
be a Nullstellensatz certificate for the non-existence of a stable set of size $\alpha(G) + r$ (with $r \geq 1$), and let
\begin{align}
1 &= A'\underbrace{
\bigg(-(\alpha(G) + r) + \sum_{i=1}^n x_i \bigg)}_{B} + \underbrace{\sum_{\{i,j\} \in E(G)}Q'_{ij}x_ix_j}_{C} + \underbrace{\sum_{i=1}^nQ'_i(x_i^2 - x_i)}_{D} \label{ss_prop_1_red_cert}
\end{align}
be the reduced certificate via Lemma \ref{reduced}. Then, for $i \in
\{1,\ldots,|M|\}$~, the  linear term $x_{d_i}$ appears in $A'$ with a
non-zero coefficient. \label{ss_prop_1_x_d_i}
\end{proposition}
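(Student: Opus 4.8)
The plan is to use the fact that a Nullstellensatz identity is a genuine polynomial identity, so it remains valid when we substitute for each $x_v$ the $v$-th coordinate of any $0/1$ vector, and in particular of the incidence vector of a stable set. First write $A'=\sum_{S}c_S\prod_{v\in S}x_v$, where by Lemma~\ref{reduced} the sum ranges over the stable sets $S$ of $G$, the empty set included (its coefficient $c_\emptyset$ is the constant term of $A'$). The key observation is that for \emph{any} stable set $T$, substituting into Eq.~\ref{ss_prop_1_red_cert} the incidence vector $e_T$ (with $(e_T)_v=1$ if $v\in T$ and $0$ otherwise) annihilates the blocks $C$ and $D$ term by term: each product $x_ix_j$ with $\{i,j\}\in E(G)$ evaluates to $0$ because at least one endpoint of the edge lies outside the stable set $T$, and each $x_v^2-x_v$ vanishes at a $0/1$ argument. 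Hence the identity collapses to $1=A'(e_T)\,B(e_T)$ with $B(e_T)=|T|-(\alpha(G)+r)$. Since every stable set of $G$ has at most $\alpha(G)<\alpha(G)+r$ elements, $B(e_T)\neq 0$, and therefore $A'(e_T)=\bigl(|T|-\alpha(G)-r\bigr)^{-1}$.

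Next I would specialize this to $T=\emptyset$ and $T=\{d_i\}$; note that a single vertex is always a stable set of a simple graph, so maximality of $M$ is not actually needed (the conclusion in fact holds for the linear term of every vertex). From $T=\emptyset$ we get $c_\emptyset=A'(e_\emptyset)=-1/(\alpha(G)+r)$. From $T=\{d_i\}$, only the constant monomial and the monomial $c_{\{d_i\}}x_{d_i}$ of $A'$ survive the substitution, so $A'(e_{\{d_i\}})=c_\emptyset+c_{\{d_i\}}$, while the formula above gives $A'(e_{\{d_i\}})=1/(1-\alpha(G)-r)=-1/(\alpha(G)+r-1)$. Subtracting,
\[
c_{\{d_i\}}\;=\;-\frac{1}{\alpha(G)+r-1}+\frac{1}{\alpha(G)+r}\;=\;\frac{-1}{\bigl(\alpha(G)+r-1\bigr)\bigl(\alpha(G)+r\bigr)}\;\neq\;0,
\]
since $r\geq 1$ and $\alpha(G)\geq 1$ force both denominators to be positive integers. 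Thus $x_{d_i}$ occurs in $A'$ with a nonzero coefficient, as claimed.

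I do not expect a real obstacle here; the only two points that need care are (i) verifying that the substitution kills $C$ and $D$, which is immediate from the definition of a stable set, and (ii) verifying $B(e_T)\neq 0$, which is precisely where the hypotheses $r\geq 1$ and ``$\alpha(G)$ is the true stability number'' enter. If one wanted all of the coefficients of $A'$ supported on subsets of $M$ simultaneously, the same family of evaluations combined with M\"obius inversion over the Boolean lattice of subsets of $M$ recovers every $c_S$ from the scalars $\bigl(|S|-\alpha(G)-r\bigr)^{-1}$; but for the present statement the two-point computation above suffices.
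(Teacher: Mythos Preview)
Your argument is correct and takes a genuinely different, cleaner route than the paper. The paper argues by contradiction and term-chasing: assuming $x_{d_i}$ is absent from $A'$, it follows the monomial $a_0x_{d_i}$ through the certificate and shows that cancelling it in $D$ forces $Q'_{d_i}$ to contain $a_0,a_0x_{d_i},a_0x_{d_i}^2,\ldots$, an infinite ascending chain, contradicting finite degree. Your approach instead exploits that Eq.~\ref{ss_prop_1_red_cert} is a polynomial identity and evaluates it at incidence vectors of stable sets, where $C$ and $D$ vanish automatically; this collapses the identity to $A'(e_T)=\bigl(|T|-\alpha(G)-r\bigr)^{-1}$ and lets you read off $c_{\{d_i\}}$ explicitly as $-1/\bigl((\alpha(G)+r-1)(\alpha(G)+r)\bigr)$. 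Two advantages of your method: it yields the exact coefficient (not merely nonvanishing), and, as you note, it works for \emph{every} vertex, so the maximality hypothesis on $M$ is unnecessary here. Moreover, your closing remark about M\"obius inversion over subsets of $M$ would in one stroke also give Proposition~\ref{ss_prop_2_subset}, which the paper proves by a second, separate infinite-chain argument. The paper's syntactic approach, by contrast, never leaves the certificate itself and does not need to evaluate anything, but at the cost of a longer and less informative argument.
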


\begin{proof}
 Our proof is by contradiction. Assume that
$x_{d_i}$ does not appear in $A'$ with a non-zero coefficient. By
inspection of Eq. \ref{ss_prop_1_red_cert}, we see that $A'$ must contain
the constant term $a_0 = -(\alpha(G) + r)$~. Therefore, the term
$a_0x_{d_i}$ appears in $A'B$~. However, $a_0x_{d_i}$ does not cancel
within $A'B$ since $x_{d_i}$ does not appear in $A'$ by assumption.
Therefore, $a_0x_{d_i}$ must cancel with a term elsewhere in the
certificate. Specifically, since $C$ only contains terms multiplying
edge monomials, $a_0x_{d_i}$ must cancel with a term in $D$~. But the
linear term $a_0x_{d_i}$ is generated in $D$ in only one way: $a_0$
must multiply $(x^2_{d_i} - x_{d_i})$ Therefore,
\begin{align*}
Q'_{d_i} &= \text{other terms} + a_0~.
\end{align*}
But when $a_0$ multiplies $(x_{d_i}^2 - x_{d_i})$~, this not only
generates $-a_0x_{d_i}$~,  which neatly cancels its counterpart in
$A'B$~, but it also generates the cross-term $a_0x_{d_i}^2$~, which
must cancel elsewhere in the certificate. However, $x_{d_i}^2$ does
not cancel with a term $C$~, since $C$ contains only terms
multiplying edge monomials, and $x_{d_i}^2$ does not cancel with a
term in $A'B$~, since $A'$ contains only terms corresponding to
square-free stable sets and also because $A'$ does not contain
$x_{d_i}$ by assumption. Therefore,  $a_0x^2_{d_i}$ must cancel
elsewhere in $D$~. There is only one way to generate a second
$x_{d_i}^2$ term in $D$: $a_0x_{d_i}$ must multiply $(x^2_{d_i} -
x_{d_i})$~. Therefore,
\begin{align*}
Q'_{d_i} &= \text{other terms} + a_0x_{d_i} + a_0~.
\end{align*}
Now, we assume
\begin{align*}
Q'_{d_i} &= \text{other terms} + a_0x^k_{d_i} + a_0x^{k-1}_{d_i} \cdots + a_0x^2_{d_i} + a_0x_{d_i} + a_0~.
\end{align*}
When $a_0x^k_{d_i}$ multiplies $(x_{d_i}^2 - x_{d_i})$~, this
generates a cross term  of the form $a_0x^{k+2}_{d_i}$~. This term
must cancel elsewhere in the certificate. As before,
$a_0x^{k+2}_{d_i}$ does not cancel with a term in $C$~, since $C$
contains only terms multiplying edge monomials, and
$a_0x^{k+2}_{d_i}$ does not cancel with a term in $A'B$~, since $A'$
contains only terms corresponding to square-free stable sets.
Therefore, $a_0x^{k+2}_{d_i}$ must cancel elsewhere in $D$~. But as
before, there is only one way to generate a second
$a_0x^{k+2}_{d_i}$ term in $D$: $a_0x^{k+1}_{d_i}$ must multiply
$(x^2_{d_i} - x_{d_i})$~. Therefore,
\begin{align*}
Q'_{d_i} &= \text{other terms} + a_0x^{k+1}_{d_i} + a_0x^k_{d_i} + \cdots + a_0x^2_{d_i} + a_0x_{d_i} + a_0.
\end{align*}
To summarize, we have inductively shown that in order to cancel
lower-order terms,  we are forced to generate terms of higher and
higher degree. In other words, $Q_{d_i}$ contains an
\textit{infinite chain of monomials increasing in degree}. Since
$\deg(Q'_{d_i})$ is finite, this is clearly a contradiction.

Therefore, $x_{d_i}$ must appear in $A'$ with a non-zero coefficient. \end{proof}

\begin{proposition} Given a graph $G$~, let $M = \{d_1,d_2,\ldots,d_{|M|}\}$ be any
maximal stable set in $G$~, and let $\{c_1,c_2,\ldots,c_{k+1}\}$ be any $(k+1)$-subset of $M$ with $k < |M|$~. Let
\begin{align*}
1 &= A\bigg(-(\alpha(G) + r) + \sum_{i=1}^n x_i\bigg) + \sum_{\{i,j\} \in E(G)}Q_{ij}x_ix_j + \sum_{i=1}^nQ_i(x_i^2 - x_i)
\end{align*}
be a Nullstellensatz certificate for the non-existence of a stable set of size $\alpha(G) + r$ (with $r \geq 1$), and let
\begin{align*}
1 &= A'\underbrace{
\bigg(-(\alpha(G) + r) + \sum_{i=1}^n x_i \bigg)}_{B} + \underbrace{\sum_{\{i,j\} \in E(G)}Q'_{ij}x_ix_j}_{C} + \underbrace{\sum_{i=1}^nQ'_i(x_i^2 - x_i)}_{D}
\end{align*}
be the reduced certificate via Lemma \ref{reduced}. If
$x_{c_1}x_{c_2}\cdots x_{c_k}$  appears in $A'$ with a non-zero
coefficient, then $x_{c_1}x_{c_2}\cdots x_{c_k}x_{c_{k+1}}$ also
appears in $A'$ with a non-zero coefficient.
\label{ss_prop_2_subset}
\end{proposition}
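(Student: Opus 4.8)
The plan is to prove something slightly stronger and cleaner than stated: that in \emph{any} reduced certificate (via Lemma~\ref{reduced}), \emph{every} square-free stable-set monomial $x_T:=\prod_{i\in T}x_i$ occurs in $A'$, with a coefficient that is forced by the data $\bigl(\alpha(G),r,|T|\bigr)$ alone and is nonzero. In particular the hypothesis ``$x_{c_1}\cdots x_{c_k}$ occurs in $A'$'' is not even needed. The tool is to specialize the reduced identity $1=A'B+C+D$ of Eq.~\ref{ss_prop_1_red_cert} at the $0/1$ points supported on stable sets.

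For a stable set $S'\subseteq V(G)$, let $p_{S'}$ be the point with $x_i=1$ for $i\in S'$ and $x_i=0$ otherwise. At $p_{S'}$ every monomial of $D=\sum_i Q'_i(x_i^2-x_i)$ vanishes (each carries a factor $x_i^2-x_i$, which is $0$ at any $0/1$ point), and every monomial of $C=\sum_{\{i,j\}\in E(G)}Q'_{ij}x_ix_j$ vanishes (each carries an edge factor $x_ix_j$, and $S'$ is stable). Since $B(p_{S'})=|S'|-(\alpha(G)+r)$ and $|S'|\le |M|\le \alpha(G)<\alpha(G)+r$ (using $r\ge 1$), the identity collapses to
\[
A'(p_{S'})\;=\;\frac{1}{|S'|-(\alpha(G)+r)}\;\neq\;0 .
\]
Because $A'$ has only square-free monomials supported on stable sets, writing $A'=\sum_T a_T x_T$ (sum over stable sets $T$) and evaluating gives $A'(p_{S'})=\sum_{T\subseteq S'}a_T$ (every subset of the stable set $S'$ is stable). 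Thus the cumulative coefficients $\sum_{T\subseteq S'}a_T$ are pinned down for all stable $S'$, and they depend only on $|S'|$.

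Now fix $S:=\{c_1,\dots,c_{k+1}\}$ (a stable set, being a subset of $M$), and recover $a_S$ by Möbius inversion over the Boolean lattice of subsets of $S$:
\[
a_S\;=\;\sum_{S'\subseteq S}(-1)^{|S|-|S'|}A'(p_{S'})\;=\;-\sum_{i=0}^{k+1}(-1)^{k+1-i}\binom{k+1}{i}\,\frac{1}{(\alpha(G)+r)-i}.
\]
It remains to check this is nonzero, which is where the classical partial-fraction identity $\sum_{i=0}^{s}\frac{(-1)^i\binom{s}{i}}{x+i}=\frac{s!}{x(x+1)\cdots(x+s)}$ comes in: taking $s=k+1$ and $x=-(\alpha(G)+r)$, the sum evaluates in closed form to
\[
a_S\;=\;-\,\frac{(k+1)!}{\prod_{i=0}^{k+1}\bigl(\alpha(G)+r-i\bigr)},
\]
a negative rational, since $i\le k+1\le \alpha(G)<\alpha(G)+r$ makes every denominator factor positive. (If one prefers to avoid the closed form, the same alternating sum is, up to sign, the $(k+1)$-st finite difference at $0$ of $t\mapsto(\alpha(G)+r-t)^{-1}$, whose $(k+1)$-st derivative has constant sign on $t<\alpha(G)+r$, so the mean-value form of finite differences makes it nonzero.) Hence $x_{c_1}\cdots x_{c_{k+1}}$ occurs in $A'$ with nonzero coefficient, as claimed; as a bonus, $a_S$ equals $-C^{G}_{k+1}$, the constant used to build the certificate in Theorem~\ref{thm_ss_cert_exists}, so a reduced certificate is in fact rigid on stable-set monomials.

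The main reason I would \emph{not} try to mimic Proposition~\ref{ss_prop_1_x_d_i} directly is the branching one meets in the bookkeeping approach: if $x_S$ were absent from $A'$, the term $a\,x_{c_1}\cdots x_{c_k}\cdot x_{c_{k+1}}$ created in $A'B$ must be cancelled, but for $k\ge 1$ this cancellation can be shared with other $k$-subsets $x_{S\setminus\{c_j\}}$ of $S$ that may themselves lie in $A'$, so one cannot immediately force the cancellation into the $Q'$'s and run the usual ``infinite ascending chain of monomials in some $Q'_{c_j}$'' contradiction; one would first have to rule this out (e.g.\ by choosing $S$ extremal in a term order, or via an outer induction on $k$). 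The evaluation/Möbius route avoids this obstacle entirely, so that is the one I would carry out.
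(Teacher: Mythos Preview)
Your argument is correct and takes a genuinely different route from the paper. The paper proceeds by contradiction and monomial-chasing: assuming $x_{c_1}\cdots x_{c_{k+1}}$ is absent from $A'$, it argues that the occurrence of this monomial in $A'B$ must be cancelled in $D$, and that each such cancellation spawns a non-square-free monomial of strictly larger degree that can again only be cancelled in $D$, producing an infinite ascending chain in some $Q'_{c_i}$ and hence a contradiction. Your evaluation-plus-M\"obius argument is both shorter and stronger: specializing at $0/1$ indicators of stable sets pins down $\sum_{T\subseteq S'}a_T$ for every stable $S'$, and inversion together with the identity $\sum_{i=0}^{s}(-1)^i\binom{s}{i}/(x+i)=s!/\prod_{i=0}^{s}(x+i)$ determines each $a_S$ in closed form as $-C^G_{|S|}$, exactly the coefficient used in the explicit certificate of Theorem~\ref{thm_ss_cert_exists}. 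Thus Propositions~\ref{ss_prop_1_x_d_i} and~\ref{ss_prop_2_subset} and Corollary~\ref{cor_terms2is} all drop out simultaneously, and the hypothesis that $x_{c_1}\cdots x_{c_k}$ already appears in $A'$ is indeed superfluous. The branching issue you flag at the end---that for $k\ge 1$ the coefficient of $x_S$ in $A'B$ is a sum over all $k$-subsets of $S$ present in $A'$, which one must know is nonzero before the chain argument can be launched---is a genuine subtlety of the bookkeeping route that your method sidesteps entirely.
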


\begin{proof}
 Our proof is by contradiction. Assume
$x_{c_1}x_{c_2}\cdots x_{c_k}$  with $k < |M|$ appears in $A'$ with
a non-zero coefficient, but $x_{c_1}x_{c_2}\cdots
x_{c_{k}}x_{c_{k+1}}$ does not. Since $x_{c_1}x_{c_2}\cdots x_{c_k}$
appears in $A'$~, $x_{c_1}x_{c_2}\cdots x_{c_{k}}x_{c_{k+1}}$ clearly
appears in $A'B$ and must cancel elsewhere in the certificate.
However, $x_{c_1}x_{c_2}\cdots x_{c_{k}}x_{c_{k+1}}$ does not cancel
with a term in $C$~, since $C$ contains only terms multiplying edge
monomials and $\{c_1,c_2,\ldots,c_{k+1}\}$ is a stable set.
Furthermore, $x_{c_1}x_{c_2}\cdots x_{c_{k}}x_{c_{k+1}}$ does not
cancel with a term in $A'B$~, since $A'$ does not contain
$x_{c_1}x_{c_2}\cdots x_{c_{k}}x_{c_{k+1}}$ by assumption.
Therefore, $x_{c_1}x_{c_2}\cdots x_{c_{k}}x_{c_{k+1}}$ must cancel
with a term in $D$~, and for at least one $i$~, $x_{c_1}x_{c_2}\cdots
\widehat{x_{c_i}}\cdots x_{c_{k}}x_{c_{k+1}}$ appears in $Q_{c_i}$
with a non-zero coefficient.

When $x_{c_1}\cdots \widehat{x_{c_i}}\cdots x_{c_{k}}x_{c_{k+1}}$
multiplies $(x^2_{c_i} - x_{c_i})$~,  this generates a cross term of
the form $x_{c_1}\cdots x^2_{c_i}\cdots x_{c_{k}}x_{c_{k+1}}$ which
must cancel elsewhere in the certificate. Let $m_1 =
x_{c_1}x_{c_2}\cdots x_{c_{k}}x_{c_{k+1}}$ and $m_2 =
x_{c_1}x_{c_2}\cdots x^2_{c_i}\cdots x_{c_{k}}x_{c_{k+1}}$~. Note
that $\deg(m_2) = \deg(m_1) + 1$~. As before, $m_2$ does not cancel
with a term in $C$~, since $C$ contains only terms multiplying edge
monomials, and $m_2$ does not cancel with a term in $A'B$~, since
$A'$ contains only terms corresponding to square-free stable sets.
Therefore, $m_2$ must cancel elsewhere in $D$~.

In order to cancel $m_2$ in $D$~, for some $c_j$~, we must subtract
one from the $c_j$-th exponent,  and then multiply this monomial by
$(x_{c_j}^2 - x_{c_j})$~. However, this generates a cross-term $m_{3}
= x_{c_1}x_{c_2}\cdots x^2_{c_i}\cdots x^2_{c_j}\cdots
x_{c_{k}}x_{c_{k+1}}$ where $\deg(m_3) = \deg(m_2) + 1$~. Note that
in the case when $j = i$~, $m_3 = x_{c_1}x_{c_2}\cdots
x^3_{c_i}\cdots x_{c_{k}}x_{c_{k+1}}$~, but $\deg(m_3)$ still is
equal to $\deg(m_2) + 1$~.

Inductively, consider the $n$-th element in this chain, and assume it
appears with a non-zero coefficient in some $Q_{c_j}$~. Let
\begin{align*}
m_n &= x_{c_1}^{\alpha_1}x_{c_2}^{\alpha_2} \cdots x_{c_{k+1}}^{\alpha_{k+1}}~,
\end{align*}
where $\alpha_{i} \geq 1$ for $i \in \{1,\ldots, k+1\}$~. When $m_n$
multiplies $(x_{c_j}^2 - x_{c_j})$~, this generates the cross-term
$m_{n}x_{c_j}^2$~. This term must cancel elsewhere in the
certificate. As before, $m_{n}x_{c_j}^2$ does not cancel with a term
$C$~, since $C$ contains only terms multiplying edge monomials, and
$m_{n}x_{c_j}^2$ does not cancel with a term in $A'B$~, since $A'$
contains only terms corresponding to square-free stable sets and
$x_{c_1}x_{c_2}\cdots x_{c_{k}}x_{c_{k+1}}$ does not appear in $A'$ by
assumption. Therefore, $m_{n}x_{c_j}^2$ must cancel elsewhere in $D$~.

In order to cancel $m_{n}x_{c_j}^2$ in $D$~, note that
\begin{align*}
m_n{c_j}^2 &= x_{c_1}^{\alpha_1}x_{c_2}^{\alpha_2} \cdots x_{c_j}^{\alpha_j + 2} \cdots x_{c_{k+1}}^{\alpha_{k+1}}~,
\end{align*}
and for some $l$~, let
\begin{align*}
m_{n+1} &= x_{c_1}^{\alpha_1}x_{c_2}^{\alpha_2} \cdots x_{c_j}^{\alpha_j + 2} \cdots x_{c_l}^{\alpha_l - 1} \cdots x_{c_{k+1}}^{\alpha_{k+1}}.
\end{align*}
Note that $\deg(m_{n+1}) = \deg(m_n) + 1$~. Therefore, in order to
cancel $m_{n}x_{c_j}^2$~, $m_{n+1}$ multiplies $(x_{c_l}^2 - x_{c_l})$~,
which generates a new term of higher degree: $m_{n+1}x^2_{c_l}$~.

To summarize, we have inductively shown that in order to cancel
lower-order terms, we are forced to generate terms of higher and
higher degree. In other words, $m_1,m_2,\ldots$ form an
\textit{infinite chain of monomials increasing in degree}. Since
$\deg(Q'_{i})$ is finite, this is clearly a contradiction.

Therefore, $x_{c_1}x_{c_2}\cdots x_{c_{k}}x_{c_{k+1}}$ must appear in $A'$ with a non-zero coefficient. \end{proof}

Using Propositions \ref{ss_prop_1_x_d_i} and \ref{ss_prop_2_subset}, we can now prove the main theorem of this section.

\begin{theorem} Given a graph $G$~, any Nullstellensatz certificate for the non-existence of a stable set
of size greater than $\alpha(G)$  has degree at least $\alpha(G)$~. \label{thm_ss_cert_min}
\end{theorem}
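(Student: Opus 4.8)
The plan is to combine Lemma \ref{reduced} with Propositions \ref{ss_prop_1_x_d_i} and \ref{ss_prop_2_subset} in a short induction. First I would take an arbitrary Nullstellensatz certificate for the non-existence of a stable set of size $\alpha(G)+r$ (with $r \geq 1$) and pass to its reduced form via Lemma \ref{reduced}, obtaining coefficients $A', Q'_i, Q'_{ij}$ with $A'$ supported on square-free monomials corresponding to stable sets of $G$, and with $\max\{\deg(A'),\deg(Q'_i),\deg(Q'_{ij})\} = \max\{\deg(A),\deg(Q_i),\deg(Q_{ij})\}$. Thus it suffices to show $\deg(A') \geq \alpha(G)$.

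Next, fix a maximum stable set $M = \{d_1,\dots,d_{\alpha(G)}\}$ of $G$; this is in particular a maximal stable set, so Propositions \ref{ss_prop_1_x_d_i} and \ref{ss_prop_2_subset} apply to it. By Proposition \ref{ss_prop_1_x_d_i}, the linear monomial $x_{d_1}$ appears in $A'$ with a non-zero coefficient. Then I would induct on $k$: assuming $x_{d_1}x_{d_2}\cdots x_{d_k}$ appears in $A'$ with a non-zero coefficient for some $k < |M| = \alpha(G)$, Proposition \ref{ss_prop_2_subset}, applied to the $(k+1)$-subset $\{d_1,\dots,d_{k+1}\}$ of $M$, yields that $x_{d_1}x_{d_2}\cdots x_{d_{k+1}}$ also appears in $A'$ with a non-zero coefficient. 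Running the induction up to $k = \alpha(G)$ shows that the degree-$\alpha(G)$ monomial $x_{d_1}x_{d_2}\cdots x_{d_{\alpha(G)}}$ appears in $A'$, hence $\deg(A') \geq \alpha(G)$, and therefore the original certificate has degree at least $\alpha(G)$.

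Since the real content --- the \emph{infinite chain of monomials of increasing degree} forced by a missing subset monomial --- is already carried by the two propositions, there is essentially no remaining obstacle here; the only points requiring care are (i) that the reduction of Lemma \ref{reduced} preserves the maximum degree, so that the bound transfers back to the original certificate, and (ii) that one must choose $M$ to be a maximum (not merely maximal) stable set, so that the monomial produced has degree exactly $\alpha(G)$. Combined with Theorem \ref{thm_ss_cert_exists}, this pins the minimum certificate degree at exactly $\alpha(G)$; and since every stable set of $G$ extends to a maximal one, the same induction applied to each maximal stable set in fact shows that every stable set's monomial occurs in $A'$, giving the ``at least one term per stable set'' claim of Theorem \ref{degHN}.
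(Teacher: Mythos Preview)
Your proposal is correct and follows essentially the same route as the paper: reduce via Lemma \ref{reduced}, pick a maximum stable set $M$, use Proposition \ref{ss_prop_1_x_d_i} to get the base case $x_{d_1}\in A'$, and then repeatedly apply Proposition \ref{ss_prop_2_subset} to climb to the full monomial $x_{d_1}\cdots x_{d_{\alpha(G)}}$. The only cosmetic difference is that the paper frames this as a contradiction (assuming $\deg(A)<\alpha(G)$ and deriving $\deg(A')\geq\alpha(G)$), whereas you argue it directly; the content is identical.
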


\begin{proof}
Our proof is by contradiction. Let
\begin{align*}
1 &= A\bigg(-(\alpha(G) + r) + \sum_{i=1}^n x_i\bigg) + \sum_{\{i,j\} \in E(G)}Q_{ij}x_ix_j + \sum_{i=1}^nQ_i(x_i^2 - x_i)
\end{align*}
be any Nullstellensatz certificate for the non-existence of a stable set of size $\alpha(G) + r$, with $r \geq 1$~,
such that $\deg(A),\deg(Q_i),\deg(Q_{ij}) < \alpha(G)$~, and let
\begin{align}
1 &= A'\underbrace{
\bigg(-(\alpha(G) + r) + \sum_{i=1}^n x_i \bigg)}_{B} + \underbrace{\sum_{\{i,j\} \in E(G)}Q'_{ij}x_ix_j}_{C} + \underbrace{\sum_{i=1}^nQ'_i(x_i^2 - x_i)}_{D} \label{ss_min_red_cert}
\end{align}
be the reduced certificate via Lemma \ref{reduced}. The proof of Lemma \ref{reduced}
implies $\deg(A') \leq \deg(A) < \alpha(G)$~. Let $M =
\{d_1,d_2,\ldots,d_{\alpha(G)}\}$ be any maximum stable set in
$G$~. Via Proposition \ref{ss_prop_1_x_d_i}, we know that $x_{d_1}$ appears
in $A'$ with a non-zero coefficient, which implies (via Proposition
\ref{ss_prop_2_subset}) that $x_{d_1}x_{d_2}$ appears in $A'$ with a
non-zero coefficient, which implies that $x_{d_1}x_{d_2}x_{d_3}$
appears in $A'$ and so on. In particular, $x_{d_1}x_{d_2}\cdots
x_{d_{\alpha(G)}}$ appears in $A'$~. This contradicts our assumption
that $\deg(A') < \alpha(G)$~. Therefore, there can be no
Nullstellensatz certificate with $\deg(A) < \alpha(G)$~, and the degree
of \textit{any} Nullstellensatz certificate is at least
$\alpha(G)$~. \end{proof}

Propositions \ref{ss_prop_1_x_d_i} and \ref{ss_prop_2_subset} also give rise to the following corollary.

\begin{corollary} Given a graph $G$~, any Nullstellensatz certificate for the
non-existence of a
stable set of size greater than $\alpha(G)$ contains at least one monomial for every stable set in $G$~.
\label{cor_terms2is}
\end{corollary}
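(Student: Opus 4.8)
The plan is to combine Propositions \ref{ss_prop_1_x_d_i} and \ref{ss_prop_2_subset} in a monomial-building induction, carried out one stable set at a time. Fix an arbitrary stable set $S$ of $G$. The first move I would make is to enlarge $S$ greedily to a \emph{maximal} stable set $M\supseteq S$ (keep adjoining vertices adjacent to nothing already chosen until no vertex can be added), and to list the vertices of $S$ in any order as $S=\{c_1,\dots,c_t\}$, so that every initial segment $\{c_1,\dots,c_j\}$ is a subset of $M$ with $j\le t\le|M|$. Given an arbitrary Nullstellensatz certificate for the non-existence of a stable set of size $\alpha(G)+r$ (with $r\ge 1$), pass to its reduced form via Lemma \ref{reduced}, whose coefficient $A'$ of $B=-(\alpha(G)+r)+\sum_i x_i$ consists only of square-free stable-set monomials.

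Next I would run the induction on $j$. The base case $j=1$ is exactly Proposition \ref{ss_prop_1_x_d_i}: since $c_1$ is an element of the maximal stable set $M$, the linear monomial $x_{c_1}$ occurs in $A'$ with a non-zero coefficient. For the inductive step, suppose $x_{c_1}\cdots x_{c_j}$ occurs in $A'$ with a non-zero coefficient and $j<t\le|M|$; applying Proposition \ref{ss_prop_2_subset} with the maximal stable set $M$ and the $(j{+}1)$-subset $\{c_1,\dots,c_{j+1}\}\subseteq M$ yields that $x_{c_1}\cdots x_{c_j}x_{c_{j+1}}$ also occurs in $A'$ with a non-zero coefficient. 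After $t-1$ such steps we conclude that the square-free monomial $\prod_{i\in S}x_i$ occurs in $A'$ with a non-zero coefficient.

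It then remains to transfer this information from the reduced certificate back to the given one. Here I would use that the two reduction rules in the proof of Lemma \ref{reduced} only ever replace a monomial of $A$ by another monomial with the \emph{same support} (when reducing modulo $x_i^2-x_i$, the exponent of one variable drops but that variable does not disappear) or delete it outright (when reducing modulo an edge monomial $x_ix_j$, which pushes the monomial entirely into some $Q_{ij}$). Consequently the coefficient of the square-free monomial $\prod_{i\in S}x_i$ in $A'$ equals a sum of coefficients, in the original $A$, of monomials whose support is exactly $S$. Since that sum is non-zero, at least one monomial supported on $S$ already occurs in the original certificate. Because distinct stable sets have distinct supports, the certificate therefore contains at least one monomial for every stable set of $G$, as claimed.

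I expect the main obstacle to be not any single step but keeping the chained application of Proposition \ref{ss_prop_2_subset} legitimate at every link: one must know that every initial segment $\{c_1,\dots,c_{j+1}\}$ lies inside one common maximal stable set $M$ with $j<|M|$, which is precisely why enlarging $S$ to a maximal (not merely a maximum) stable set at the very beginning is the essential first move. The final paragraph's bookkeeping relating $A'$ to $A$ is routine once one observes the support-preserving nature of the two reductions, so I would keep that part brief.
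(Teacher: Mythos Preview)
Your argument is correct and follows the same route as the paper: reduce via Lemma~\ref{reduced}, then chain Propositions~\ref{ss_prop_1_x_d_i} and~\ref{ss_prop_2_subset} to force every stable-set monomial into $A'$, and finally pull this back to $A$. The paper's proof is a three-line sketch of exactly this.

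One point worth noting: your final paragraph is actually more careful than the paper's. The paper asserts that ``the number of terms in $A$ is equal to the number of terms in $A'$,'' which is not literally guaranteed by the reduction process (two monomials of $A$ with the same stable-set support could merge or cancel in $A'$). Your support-preserving argument---that every monomial of $A'$ supported on $S$ arises as a sum of coefficients of monomials of $A$ with support exactly $S$---is the right way to make the transfer rigorous, and it yields the needed inequality (number of terms in $A$ $\ge$ number of stable sets) without the stronger equality claim. So your version strengthens the exposition at precisely the spot where the paper is terse.
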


\begin{proof}
 Given any Nullstellensatz certificate, we can create the reduced certificate via Lemma \ref{reduced}.
The proof of the Lemma \ref{reduced} implies that the number of terms in $A$ is equal to the number of terms in $A'$~.
Via Propositions \ref{ss_prop_1_x_d_i} and \ref{ss_prop_2_subset}, $A'$ contains one monomial for every stable set in $G$~.
Therefore, $A$ also contains one monomial for every stable set in $G$~.
\end{proof}

This brings us to the last theorem of this section.

\begin{theorem} Given a graph $G$~, a minimum-degree Nullstellensatz
certificate for the non-existence of a stable set of size greater than $\alpha(G)$
has degree equal to $\alpha(G)$ and contains at least one term for
every stable set in $G$~.
\end{theorem}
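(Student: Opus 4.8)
The plan is to obtain this final theorem by simply assembling the three preceding results, since it is precisely their conjunction. First I would invoke Theorem~\ref{thm_ss_cert_exists}: the explicit algorithm of Figure~\ref{NCC} produces, for every graph $G$ and every $r \geq 1$, coefficients $A, Q_i, Q_{ij}$ with $\deg(A) = \alpha(G)$ and $\deg(Q_i), \deg(Q_{ij}) \leq \alpha(G)-1$ satisfying the identity $1 = AB + C$. This establishes that a Nullstellensatz certificate of degree at most $\alpha(G)$ exists, hence the minimum degree is $\leq \alpha(G)$.

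Next I would apply Theorem~\ref{thm_ss_cert_min}, which says that any Nullstellensatz certificate for the non-existence of a stable set of size greater than $\alpha(G)$ has degree at least $\alpha(G)$. Combining this lower bound with the upper bound from the previous paragraph forces the minimum degree to equal $\alpha(G)$ exactly, and shows that the certificate constructed by the algorithm is itself of minimum degree.

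For the second assertion, I would quote Corollary~\ref{cor_terms2is}: any Nullstellensatz certificate (in particular any minimum-degree one) contains at least one monomial for every stable set of $G$. The proof of that corollary reduces the given certificate via Lemma~\ref{reduced} — which preserves the number of terms in $A$ — and then applies Propositions~\ref{ss_prop_1_x_d_i} and \ref{ss_prop_2_subset} to show $A'$ carries one monomial per stable set. Stitching these statements together yields the claim.

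There is no genuine obstacle here; the only point requiring slight care is bookkeeping about \emph{which} coefficient polynomial the ``one term per stable set'' refers to (namely $A$, the multiplier of $-(\alpha(G)+r)+\sum_i x_i$) and the fact that the reduction of Lemma~\ref{reduced} neither raises the degree nor changes the term count of $A$, so that the minimum-degree property and the term-count property are compatible and hold simultaneously for the same certificate. Hence the statement follows immediately, and the formal proof is a one-line citation of Theorems~\ref{thm_ss_cert_exists} and \ref{thm_ss_cert_min} together with Corollary~\ref{cor_terms2is}.
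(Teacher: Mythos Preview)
Your proposal is correct and follows exactly the paper's own approach: the paper's proof is literally the one-line statement that the theorem follows directly from Theorems~\ref{thm_ss_cert_exists} and \ref{thm_ss_cert_min} together with Corollary~\ref{cor_terms2is}. Your additional remarks about bookkeeping (that the term count pertains to $A$ and that Lemma~\ref{reduced} preserves both degree and term count) are accurate elaborations but go beyond what the paper spells out.
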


\begin{proof} This theorem follows directly from Theorems \ref{thm_ss_cert_exists},
\ref{thm_ss_cert_min}, and Corollary \ref{cor_terms2is}.
\end{proof}

Finally, our results establish new lower bounds for the degree and number of terms
of Nullstellensatz certificates. In
earlier work, researchers in logic and complexity showed both
logarithmic and linear growth of degree of the Nullstellensatz over
finite fields or for special instances, e.g. Nullstellensatz related
to the pigeonhole principle (see \cite{bus}, \cite{impagliazzo} and
references therein). Our main complexity result below settles a question of Lov\'asz \cite{lovasz1}:

\begin{corollary} Given any infinite family of graphs $G_n$, on $n$ vertices,
the degree of a minimum-degree
Nullstellensatz certificate for the non-existence of a stable set of size
greater than $\alpha(G)$ grows as $\Omega(n)$~.  Moreover,
there are  graphs for which the degree of the Nullstellensatz certificate grows
linearly in $n$ and, at the same time, the number of terms in the
coefficient polynomials of the Nullstellensatz certificate is exponential in
$n$~.
\end{corollary}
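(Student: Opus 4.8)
The plan is to deduce this corollary directly from the results already established. By Theorems~\ref{thm_ss_cert_exists} and \ref{thm_ss_cert_min}, for every graph $G$ the minimum-degree Nullstellensatz certificate for the non-existence of a stable set of size greater than $\alpha(G)$ has degree exactly $\alpha(G)$, and by Corollary~\ref{cor_terms2is} every such certificate contains at least one monomial for each stable set of $G$. Hence the entire statement reduces to exhibiting an explicit infinite family $G_n$ of graphs on $n$ vertices with $\alpha(G_n)=\Omega(n)$ and, for the second assertion, with $2^{\Omega(n)}$ stable sets.

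For the construction I would take $G_n$ to be the vertex-disjoint union of $k:=\lfloor n/3\rfloor$ triangles together with $n-3k$ isolated vertices. Since the stability number of a disjoint union is the sum of the stability numbers of the components, $\alpha(G_n)=k+(n-3k)=n-2k\geq n/3$, so combining Theorems~\ref{thm_ss_cert_exists} and \ref{thm_ss_cert_min} the minimum-degree certificate for $G_n$ has degree exactly $\alpha(G_n)=\Theta(n)$, which proves the first assertion. For the second, the stable sets of a disjoint union biject with the tuples of stable sets of the components; a triangle has exactly $4$ stable sets (the empty set and its three singletons) and an isolated vertex has $2$, so $G_n$ has $4^{k}\cdot 2^{\,n-3k}=2^{\,n-k}\geq 2^{2n/3}$ stable sets. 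By Corollary~\ref{cor_terms2is}, every Nullstellensatz certificate certifying that $G_n$ has no stable set of size $\alpha(G_n)+1$ therefore has at least $2^{2n/3}$ monomials among its coefficient polynomials, while its degree is $\Theta(n)$; this gives the ``moreover'' clause. (A connected alternative is the path $P_n$, for which $\alpha(P_n)=\lceil n/2\rceil$ and the number of stable sets is the Fibonacci number $F_{n+2}=\Theta(\varphi^{n})$.)

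There is essentially no obstacle beyond this bookkeeping: all the mathematical content has already been supplied by Theorems~\ref{thm_ss_cert_exists}, \ref{thm_ss_cert_min} and Corollary~\ref{cor_terms2is}. The only points that require care are (i) choosing a single family that realizes the linear lower bound on degree and the exponential lower bound on the number of terms \emph{simultaneously} --- disjoint triangles work because they are sparse enough to leave $\Omega(n)$ pairwise non-adjacent vertices yet each component still multiplies the count of stable sets by a constant factor exceeding $1$ --- and (ii) reading the statement as the assertion that such families \emph{exist}, which is precisely what settles the question of Lov\'asz \cite{lovasz1}, since, for instance, $\alpha(K_n)=1$ shows that not literally every infinite family exhibits this growth.
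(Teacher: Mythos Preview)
Your proof is correct and, for the ``moreover'' clause, uses the same explicit family as the paper: a disjoint union of $\lfloor n/3\rfloor$ triangles. The only cosmetic difference is that the paper counts the $3^{n/3}$ \emph{maximal} stable sets rather than all $4^{n/3}$ stable sets, but either count is exponential and Corollary~\ref{cor_terms2is} applies to all stable sets, so your version is in fact cleaner.

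For the first clause the paper takes a different tack: it quotes the Harant--Schiermeyer lower bound
\[
\alpha(G)\ \ge\ \tfrac{1}{2}\Big((2m+n+1)-\sqrt{(2m+n+1)^2-4n^2}\,\Big)
\]
and asserts that this makes $\alpha(G_n)$ grow linearly. You are right to be suspicious here: that bound is $\Theta\!\big(n^2/(2m+n)\big)$, so it gives $\Omega(n)$ only for sparse families with $m=O(n)$, and the clause is literally false for, e.g., $G_n=K_n$. Your reading of the corollary as an existence statement (which is what settles Lov\'asz's question) is the defensible one, and your argument establishes it; the paper's own proof of the universal version does not go through as written.
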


\begin{proof}
The stability number of a graph $G$ with $n$ nodes
and $m$ vertices grows linearly (\cite{harant}) since
\begin{align*}
\alpha(G) &\geq \frac{1}{2}\bigg((2m + n + 1) - \sqrt{(2m + n + 1)^2 - 4 n^2} \bigg).
\end{align*}
Finally, it is enough to remark that
there exist families of graphs with linear growth in the minimum degree
of their Nullstellensatz certificates, but exponential growth in their numbers of
terms. The disjoint union of $n/3$ triangles has exactly $3^{n/3}$
maximal stable sets. Therefore, its Nullstellensatz certificate's minimum degree
grows as $O(n/3)$~, but its number of terms grows as
$3^{n/3}$ (see \cite{griggs} and references therein). \end{proof}

\subsection{The Nullstellensatz and 3-colorability} \label{HilbertNcolor}

 In this section, we investigate the degree growth of Nullstellensatz
 certificates for the non-$3$-colorability for certain
 graphs. Curiously, every non-3-colorable graph that we have
 investigated so far has a minimum-degree Nullstellensatz certificate
 of degree four.  Next, we prove that four is indeed a lower bound on
 the degree of such certificates.

 \begin{theorem} Every Nullstellensatz certificate of a non-3-colorable graph has degree \emph{at least} four.
 \end{theorem}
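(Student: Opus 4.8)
The plan is to argue by contradiction. By Lemma~\ref{bayerlem} with $k=3$, encode $G$ with a polynomial $x_i^3-1$ for each vertex $i$ and a polynomial $g_{ij}:=x_i^2+x_ix_j+x_j^2$ for each edge $\{i,j\}$, and suppose that for some non-3-colourable $G$ there is a certificate
\[
1\ =\ \sum_{i\in V}\alpha_i\,(x_i^3-1)\ +\ \sum_{\{i,j\}\in E}\beta_{ij}\,g_{ij},\qquad \deg\alpha_i\le 3,\ \deg\beta_{ij}\le 3 .
\]
Write $\alpha_i=\sum_{d=0}^{3}\alpha_i^{(d)}$ and $\beta_{ij}=\sum_{d=0}^{3}\beta_{ij}^{(d)}$ for the homogeneous parts and compare homogeneous components of the identity degree by degree. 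The only three components I need are the degree-$0$ part, $\sum_i\alpha_i^{(0)}=-1$; the degree-$6$ part, $\sum_i\alpha_i^{(3)}x_i^3=0$; and the degree-$3$ part, $\sum_i\alpha_i^{(0)}x_i^3-\sum_i\alpha_i^{(3)}+\sum_{\{i,j\}\in E}\beta_{ij}^{(1)}g_{ij}=0$.

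Next I would evaluate the degree-$3$ and degree-$6$ components at an arbitrary point $\mathbf a$ of the grid $\{1,\omega,\omega^2\}^n$, $\omega:=e^{2\pi i/3}$. There every $x_i^3-1$ vanishes, and $g_{ij}(\mathbf a)$ equals $3a_i^2$ if $a_i=a_j$ and $0$ otherwise. The degree-$6$ component gives $\sum_i\alpha_i^{(3)}(\mathbf a)=0$; feeding this and $\sum_i\alpha_i^{(0)}=-1$ into the evaluated degree-$3$ component makes everything cancel except
\[
3\sum_{\substack{\{i,j\}\in E\\ a_i=a_j}}a_i^{2}\,\beta_{ij}^{(1)}(\mathbf a)\ =\ 1\qquad\text{for every }\mathbf a\in\{1,\omega,\omega^2\}^n,
\]
where now the $\beta_{ij}^{(1)}$ are homogeneous \emph{linear} forms. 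Since $G$ is not 3-colourable the index set is nonempty for every $\mathbf a$ --- every colouring of $G$ has a monochromatic edge --- so the theorem is reduced to showing that this system of $3^n$ linear equations in the coefficients of the linear forms $\beta_{ij}^{(1)}$ has no solution for any non-3-colourable $G$.

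To refute the reduced system I would play a few well-chosen colourings against each other. Using $a^3=1$ on the monochromatic set and $1+\omega+\omega^2=0$, the contribution of a monochromatic edge $\{i,j\}$ of common colour $c$ simplifies to $(b_{ij,i}+b_{ij,j})+c^2\!\sum_{k\notin\{i,j\}}b_{ij,k}a_k$; comparing the three colour-rotations of a point $\mathbf a$ (which have the same monochromatic edges) forces linear relations among these ``own-endpoint'' sums and a handful of other coefficients. For $G=K_4$ this already suffices: the six colourings $(1,1,\omega,\omega^2)$, $(1,1,\omega^2,\omega)$, $(\omega,\omega^2,1,1)$, $(\omega^2,\omega,1,1)$, $(1,1,\omega,\omega)$, $(1,1,\omega^2,\omega^2)$ realise $\{12\}$, $\{34\}$ and $\{12,34\}$ as the exact monochromatic-edge sets, and the resulting relations collapse to $\tfrac23=\tfrac13$.

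The step I expect to be the main obstacle is making this last argument uniform over \emph{all} non-3-colourable graphs. The simplest gadget --- a pair of disjoint edges $e_1,e_2$ together with colourings realising $\{e_1\}$, $\{e_2\}$ and $\{e_1,e_2\}$ as the exact conflict set --- is not always available: in $K_n$ with $n\ge 5$ no colouring has a single monochromatic edge, and in an odd wheel no two cycle edges can be the exact conflict set (properness elsewhere would force the hub to meet all three colours in its neighbourhood). One must therefore tailor the gadget to $G$ --- a $4$-critical subgraph, a spoke paired with an opposite cycle edge in an odd wheel, a clique --- and these are precisely the structures for which Theorem~\ref{degHN} pins the degree down to exactly four. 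Showing that some admissible gadget always exists, or replacing it by a more robust averaging argument that uses only the fact that every colouring of $G$ has a monochromatic edge, is where the real work lies; a natural attempt --- summing the reduced identity over the colour of a single vertex $w$ --- turns it into a similar statement about $G-w$ and its proper colourings, but that recursion does not by itself terminate.
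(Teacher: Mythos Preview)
Your reduction is correct up to the displayed system
\[
3\sum_{\substack{\{i,j\}\in E\\ a_i=a_j}}a_i^{2}\,\beta_{ij}^{(1)}(\mathbf a)=1\qquad(\mathbf a\in\{1,\omega,\omega^2\}^n),
\]
but, as you yourself say, you do not know how to refute this system uniformly in $G$; the $K_4$ gadget does not generalise, and the recursion on $G-w$ does not close. So the proof is genuinely incomplete.

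The paper avoids the difficulty entirely by \emph{never evaluating} at colourings. It works with the homogeneous components as polynomial identities and compares monomial coefficients. Two observations you did not exploit are decisive. First, the degree-$6$ identity $\sum_i\alpha_i^{(3)}x_i^3=0$, read as a polynomial identity (not merely on the grid), forces every $\alpha_i^{(3)}$ to be a $\mathbb C$-linear combination of pure cubes $x_s^3$: a mixed monomial $x_s^2x_t$ or $x_sx_tx_u$ in $\alpha_i^{(3)}$ would produce a degree-$6$ monomial divisible by $x_i^3$ but by no other cube, hence uncancelable. Second, with $\alpha_i^{(3)}$ thus restricted, the degree-$3$ identity says that the \emph{non-cube} monomials of $\sum_{\{i,j\}}\beta_{ij}^{(1)}g_{ij}$ vanish. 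Summing the coefficient equations for all $x_s^2x_t$ and using the coefficient equations for all $x_sx_tx_u$ yields $\sum_{\{s,t\}\in E}(b_{st,s}+b_{st,t})=0$; on the other hand, summing the coefficient equations for the cube monomials $x_s^3$ (together with the degree-$0$ relation $\sum_i\alpha_i^{(0)}=-1$ and the degree-$6$ relation $\sum_{i,s}a_{is}=0$) yields $\sum_{\{s,t\}\in E}(b_{st,s}+b_{st,t})=1$. That is the contradiction.

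Note that this argument is completely graph-independent: no gadget, no $4$-critical subgraph, no use of non-$3$-colourability is needed. The point where your approach loses exactly the information required is the passage from the degree-$3$ polynomial identity to its values on the grid: on $\{1,\omega,\omega^2\}^n$ every $x_s^3$ equals $1$, so the distinction between the ``cube'' part and the ``non-cube'' part of the degree-$3$ identity --- which is what drives the paper's contradiction --- is erased by evaluation. If you keep the degree-$3$ identity as a polynomial identity and separately equate the $x_s^3$, $x_s^2x_t$, and $x_sx_tx_u$ coefficients, the proof closes in a few lines and works for every $G$.
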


\begin{proof}
 Our proof is by contradiction. Suppose there
 exists a Nullstellensatz certificate of degree three or less.
 Such a certificate has the following form
 \begin{align}
 1 &= \sum_{i=1}^nP_{\{i\}}(x_i^3 - 1) + \sum_{\{i,j\} \in E}P_{\{ij\}}(x_i^2 + x_ix_j + x_j^2)~, \label{min_proof:cert_deg_3}
 \end{align}
 where $P_{\{i\}}$ and $P_{\{ij\}}$ represent general
 polynomials of degree less than or equal to three. To be precise,
 \begin{align*}
 P_{\{i\}} = & \sum_{s=1}^na_{\{i\}s}x_s^3 + \sum_{s=1}^n \sum_{\stackrel{t=1}{t\neq s}}^nb_{\{i\}st}x_s^2x_t \\
 & + \sum_{s=1}^n \sum_{t=s+1}^n \sum_{u=t+1}^n c_{\{i\}stu}x_sx_tx_u + \sum_{s=1}^n \sum_{t=1}^nd_{\{i\}st}x_sx_t + \sum_{s=1}^ne_{\{i\}s}x_s + f_{\{i\}}
 \end{align*}
 and
 \begin{align*}
 P_{\{ij\}} =& \sum_{s=1}^na_{\{ij\}s}x_s^3 + \sum_{s=1}^n \sum_{\stackrel{t=1}{t\neq s}}^nb_{\{ij\}st}x_s^2x_t \\
 & + \sum_{s=1}^n \sum_{t=s+1}^n \sum_{u=t+1}^n c_{\{ij\}stu}x_sx_tx_u + \sum_{s=1}^n \sum_{t=1}^nd_{\{ij\}st}x_sx_t + \sum_{s=1}^ne_{\{ij\}s}x_s + f_{\{ij\}}~.
 \end{align*}

 Since we work with undirected graphs,
 note that $a_{\{ij\}s} =a_{\{ji\}s}$~, and this fact applies to all coefficients $a$ through $f$~.
 Note also that when $\{i,j\}$ is not an edge of the
 graph, $P_{ij} =0$ and thus
 $a_{\{ij\}s} = 0$~. Again, this fact holds for  all
 coefficients $a$ through $f$~.

 When $P_{\{i\}}$ multiplies $(x_i^3 - 1)$~, this generates cross-terms
 of the form $P_{\{i\}}x_i^3$ and $-P_{\{i\}}$~. In particular, this
 generates monomials of degree six or less. Notice that $P_{\{ij\}}(x_i^2 +
 x_ix_j + x_j^2)$ does \textit{not} generate monomials of degree six,
 only monomials of degree five or less. We begin the process of
 deriving a contradiction from Eq. \ref{min_proof:cert_deg_3} by
 considering all monomials of the form $x_s^3x_i^3$ that appear in the
 expanded Nullstellensatz certificate. These monomials are formed in
 only \textit{two} ways: Either (1) $x_s^3(x_i^3 - 1)$~, or (2)
 $x_i^3(x_s^3 - 1)$~. Therefore, the $n^2$ equations for $x_s^3x_i^3$
 are as follows:

 \begin{align*}
 0      &=      & a_{\{1\}1}~,                      & \qquad(\text{coefficient for $x_1^3x_1^3 = x_1^6$})      \tag{$I.1$}\\
 0      &=      & a_{\{1\}2} + a_{\{2\}1}~,         & \qquad(\text{coefficient for $x_1^3x_2^3$})              \tag{$I.2$}\\
 \vdots &       & \vdots \qquad\qquad                     & \qquad\qquad\qquad\vdots                                             \\
 0      &=      & a_{\{n-1\}n} + a_{\{n\}(n-1)}~,   & \qquad(\text{coefficient for $x_{(n-1)}^3x_{n}^3$})      \tag{$I.n^2 - 1$}\\
 0      &=      & a_{\{n\}n}~.                      & \qquad(\text{coefficient for $x_{n}^3x_{n}^3$} = x_n^6)  \tag{$I.n^2$}
 \end{align*}

 Summing equations $I.1$ through $I.n^2$~, we get
 \begin{align}
 0 &= \sum_{i = 1}^n\sum_{s = 1}^na_{\{i\}s}~. \label{min_proof:a_i}
 \end{align}
 Let us now consider monomials of the form $x_s^2x_tx_i^3$ (with $s
 \neq t$). These monomials are formed in only \textit{one} way: by
 multiplying $b_{\{i\}st}x_s^2x_t$ by $x_i^3$~. Therefore, since the
 coefficient for $x_s^2x_tx_i^3$ must simplify to zero in the expanded
 Nullstellensatz certificate, $b_{\{i\}st} = 0$ for all
 $b_{\{i\}}$~. When we consider monomials of the form $x_sx_tx_ux_i^3$
 (with $s < t < u)$~, we see that $c_{\{i\}stu} = 0$ for all
 $c_{\{i\}}$~, for the same reasons as above.

 As we continue toward our contradiction, we now consider monomials
 of degree three in the expanded Nullstellensatz certificate. In
 particular, we consider the coefficient for $x_s^3$~. The monomial
 $x_s^3$ is generated in three ways: (1) $f_{\{s\}}(x_s^3 - 1)$~, (2)
 $a_{\{i\}s}x_s^3(x_i^3 - 1)$ (from the vertex polynomials), and (3)
 $e_{\{st\}s}x_s(x_s^2 +x_sx_t + x_t^2)$ (from the edge
 polynomials). The equations for $x_1^3,\ldots,x_n^3$ are as follows:

  \begin{align*}
 0      &=& f_{\{1\}}-\sum_{i=1}^na_{\{i\}1} + \sum_{t \in \text{Adj}(1)}e_{\{1t\}1}~, & \qquad(\text{coefficient for $x_1^3$})  \tag{$II.1$}\\
 0      &=& f_{\{2\}}-\sum_{i=1}^na_{\{i\}2} + \sum_{t \in \text{Adj}(2)}e_{\{2t\}2}~, & \qquad(\text{coefficient for $x_2^3$}) \tag{$II.2$}\\
 \vdots &       & \vdots \qquad\qquad\qquad                     & \qquad\qquad\vdots                                             \\
 0      &=& f_{\{n\}}-\sum_{i=1}^na_{\{i\}n} + \sum_{t \in \text{Adj}(n)}e_{\{nt\}n}~. & \qquad(\text{coefficient for $x_n^3$}) \tag{$II.n$}
 \end{align*}

 Summing equations $II.1$ through $II.n$~, we get
 \begin{align}
 0 &= \sum_{i = 1}^nf_{\{i\}} - \bigg(\sum_{i = 1}^n\sum_{s = 1}^na_{\{i\}s} \bigg) + \sum_{s=1}^n\sum_{t \in \text{Adj}(s)}^ne_{\{st\}s}~.\label{min_proof:x_i^3}
 \end{align}
 Since the degree three or less Nullstellensatz certificate
 (Eq. \ref{min_proof:cert_deg_3}) is identically one, the constant terms must sum
 to one. Therefore, we know $\sum_{i =
   1}^nf_{\{i\}}~=~-1$~. Furthermore, recall that $e_{\{st\}s} = 0$ if the
 undirected edge $\{s,t\}$ does not exist in the graph. Therefore,
 applying Eq. \ref{min_proof:a_i} to Eq. \ref{min_proof:x_i^3}, we have
 the following equation
 \begin{align}
 1 &= \sum_{\stackrel{s,t=1,}{s \neq t}}^ne_{\{st\}s}~.\label{min_proof:const}
 \end{align}
 To give a preview of our overall proof strategy, the equations to come
 will ultimately show that the right-hand side of Eq. \ref{min_proof:const} also equals zero,
 which is a contradiction.

 Now we will consider the monomial $x_s^2x_t$ (with $s \neq t$). We
 recall that $b_{\{i\}st} = 0$ for all $b_{\{i\}}$ (where $b_{\{i\}st}$
 is the coefficient for $x_s^2x_t$ in the $i$-th vertex
 polynomial). Therefore, we do \textit{not} need to consider
 $b_{\{i\}st}$ in the equation for the coefficient of monomial
 $x_s^2x_t$~. In other words, we only need to consider the edge
 polynomials, which can generate this monomial in two ways: (1)
 $e_{\{st\}s}x_s\cdot x_sx_t$~, and (2) $e_{\{si\}t}x_t \cdot x_s^2$~. The
 $N=2\binom{n}{2}$ equations for these coefficients are:

   \begin{align*}
 0 &=& e_{\{12\}1} + \sum_{i \in \text{Adj}(1)}e_{\{1i\}2}~,&  \qquad(\text{coefficient for $x_1^2x_2$})  \tag{$III.1$}\\
 0 &=& e_{\{13\}1} + \sum_{i \in \text{Adj}(1)}e_{\{1i\}3}~,&  \qquad(\text{coefficient for $x_1^2x_3$})  \tag{$III.2$}\\
 \vdots &       & \vdots \qquad\qquad                     & \qquad\qquad\qquad\vdots                                             \\
 0 &=& e_{\{n(n-1)\}n} + \sum_{i \in \text{Adj}(n)}e_{\{ni\}(n-1)}~.&  \qquad(\text{coefficient for $x_{n}^2x_{n-1}$}) & \tag{$III.N$}
  \end{align*}

 When we sum equations $III.1$ to $III.N$~, we obtain
 \begin{align}
 \sum_{s=1}^n \sum_{\stackrel{t=1,}{t \neq s}}^n e_{\{st\}s} + \underbrace{\bigg(\sum_{s=1}^n \sum_{t \in Adj(s)}e_{\{st\}t}\bigg)}_{\text{partial sum A}}  + \underbrace{\bigg(\sum_{s=1}^n \sum_{t \in Adj(s)} \sum_{\stackrel{u=1,}{u \neq s,t}}^ne_{\{st\}u}\bigg)}_{\text{partial sum B}}&= 0~. \label{min_proof:x_s^2x_t_a}
 \end{align}
 However, recall that $e_{\{st\}u} = 0$ when $\{s,t\}$
 does not exist in the graph, and also that $e_{\{st\}t} = e_{\{ts\}t}$~.
 Thus, we can rewrite partial sum A from Eq. \ref{min_proof:x_s^2x_t_a} as
 \begin{align*}
 \sum_{s=1}^n \sum_{t \in Adj(s)}e_{\{st\}t} =
 \sum_{s=1}^n\sum_{\stackrel{t=1,}{t \neq s}}^n e_{\{st\}t} = \sum_{s=1}^n\sum_{\stackrel{t=1,}{t \neq s}}^n e_{\{ts\}t}~.
 \end{align*}
Substituting the above into Eq. \ref{min_proof:x_s^2x_t_a} yields
 \begin{align}
 2\sum_{\stackrel{s,t=1,}{s \neq t}}^ne_{\{st\}s} + \underbrace{\bigg(\sum_{s=1}^n \sum_{t \in Adj(s)} \sum_{\stackrel{u=1,}{u \neq s,t}}^ne_{\{st\}u}\bigg)}_{\text{partial sum B}}&= 0~. \label{min_proof:x_s^2x_t}
 \end{align}
 Finally, we consider the monomial $x_sx_tx_u$ (with $s < t < u$).
 We have already argued that $c_{\{i\}stu} = 0$ for all $c_{\{i\}}$
 (where $c_{\{i\}stu}$ is the coefficient for $x_sx_tx_u$ in the $i$-th vertex polynomial).
 Therefore, as before, we need only consider the edge polynomials, which can generate this monomial
 in three ways: (1) $e_{\{st\}u}x_u \cdot x_sx_t$~, (2) $e_{\{su\}t}x_t \cdot x_sx_u$~, and
 (3) $e_{\{tu\}s}x_s\cdot x_tx_u$~. As before, these coefficients must cancel in the expanded certificate,
 which yields the following $\binom{n}{3}$ equations:

\begin{align*}
 0 &=& e_{\{12\}3} + e_{\{13\}2} + e_{\{23\}1}~, &  \qquad(\text{coefficient for $x_1x_2x_3$}) & \tag{$IV.1$}\\
 0 &=& e_{\{12\}4} + e_{\{14\}2} + e_{\{24\}1}~, &  \qquad(\text{coefficient for $x_1x_2x_4$}) &\tag{$IV.2$}\\
 \vdots &       & \vdots \qquad\qquad                     & \qquad\qquad\qquad\vdots                                             \\
 0 &=& e_{\{(n-2)(n-1)\}n} + e_{\{(n-2)n\}(n-1)} + e_{\{(n-1)n\}(n-2)}~.&  \qquad(\text{coefficient for $x_{n-2}x_{n-1}x_n$}) \tag{$IV.M$}\\
 \end{align*}

Summing equations $IV.1$ through $IV.M$~, we obtain
 \begin{align}
 \sum_{s=1}^{n-2}\sum_{t=s+1}^{n-1}\sum_{u=t+1}^n\bigg(e_{\{st\}u} + e_{\{su\}t} + e_{\{tu\}s}\bigg)  &= 0~. \label{min_proof:x_sx_ux_t}
 \end{align}

Now we come to the critical argument of the proof. 
We claim that the following equation holds:
\begin{align}
\bigg(\sum_{s=1}^n \sum_{t \in Adj(s)} \sum_{\stackrel{u=1,}{u \neq
    s,t}}^ne_{\{st\}u}\bigg) &=
2\bigg(\sum_{s=1}^{n-2}\sum_{t=s+1}^{n-1}\sum_{u=t+1}^n\bigg(e_{\{st\}u}
+ e_{\{su\}t} +
e_{\{tu\}s}\bigg)\bigg)~. \label{min_cert:partial_sum_x_s^2x_t}
\end{align}
Notice that the left-hand and right-hand sides of this equation consist only
of  coefficients $e_{\{st\}u}$ with $s,t,u$ distinct. Consider any
such coefficient $e_{\{st\}u}$~. Notice that $e_{\{st\}u}$ appears
exactly \textit{once} on the right side of the equation.
Furthermore, either $e_{\{st\}u}$ appears exactly \textit{twice} on
the left side of this equation (since $s \in \text{Adj}(t)$ implies
$t \in \text{Adj}(s)$), or $e_{\{st\}u} = 0$ (since the edge
$\{s,t\}$ does not exist in the graph). Therefore, Eq.
\ref{min_cert:partial_sum_x_s^2x_t} is proven. Applying this result
(and Eq. \ref{min_proof:x_sx_ux_t}) to Eq. \ref{min_proof:x_s^2x_t}
gives us the following:

\begin{align}
\sum_{\stackrel{1 \leq s,t \leq n}{s \neq t}}e_{\{st\}s}&= 0~. \label{min_proof:final_x_s^2x_t}
\end{align}
But Eq. \ref{min_proof:final_x_s^2x_t} contradicts Eq.
\ref{min_proof:const} ($1 = 0$),  thus there can be no certificate
of degree less than four. \end{proof}

It is important to note that when we try to construct certificates of
degree four or greater, the equations for the degree-6 monomials
become considerably more complicated. In this case,
the edge polynomials \textit{do} contribute monomials of degree six,
which causes the above argument to break.

We conclude this subsection with a result that allows us to bound
the degree of a minimum-degree  Nullstellensatz certificate of a
particular graph, if that graph can be ``reduced'' to another graph
whose minimum-degree Nullstellensatz certificate is known.

\begin{lemma} \label{lem_node_contraction}
\begin{enumerate}
\renewcommand{\theenumi}{\arabic{enumi}.}
\item  If $H$ is a subgraph of $G$~, and $H$ has a minimum-degree non-3-colorability Nullstellensatz
certificate of degree $k$~, then
$G$ also has a minimum-degree non-3-colorability Nullstellensatz certificate of degree $k$~.
\item Suppose that a non-3-colorable
graph $G$ can be transformed
to a non-3-colorable graph $H$ via a sequence of identifications of non-adjacent
nodes of $G$~. If a minimum-degree non-3-colorability
Nullstellensatz certificate for $H$ has degree $k$~, then a minimum-degree non-3-colorable
Nullstellensatz certificate for $G$ has
degree at least $k$~.
\end{enumerate}
\end{lemma}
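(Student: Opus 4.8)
The plan is to prove the two parts by rather different arguments: part~1 is a trivial padding of a certificate with zeros, whereas part~2 pushes a certificate forward along the $\mathbb{C}$-algebra homomorphism induced by a vertex identification.

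For part~1, start from a degree-$k$ non-3-colorability certificate $1 = \sum_{i\in V(H)}P_{\{i\}}(x_i^3-1)+\sum_{\{i,j\}\in E(H)}P_{\{ij\}}(x_i^2+x_ix_j+x_j^2)$ of $H$. Since $V(H)\subseteq V(G)$ and $E(H)\subseteq E(G)$, I would simply assign the zero polynomial to each vertex of $V(G)\setminus V(H)$ and each edge of $E(G)\setminus E(H)$; the same identity then certifies non-3-colorability of $G$ (which is itself non-3-colorable because it contains $H$) and has the \emph{same} degree $k$. That this is a \emph{minimum}-degree certificate for $G$ follows by combining it with the universal lower bound just established --- every non-3-colorable graph needs degree at least four --- so whenever $k$ meets that bound the padded certificate cannot be improved; this is exactly the situation in the intended applications, where $H$ is an odd wheel or a clique and $k=4$.

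For part~2, it is enough to handle a single identification of two non-adjacent vertices $u,v$ of $G$ into a vertex $w$ and then iterate along the sequence $G=G_0\to\cdots\to G_m=H$ (each $G_t$ stays non-3-colorable, since a 3-coloring of $G_{t+1}$ lifts to $G_t$ by giving $u$ and $v$ the color of $w$, which is legal precisely because $\{u,v\}$ is not an edge of $G_t$ and every neighbor of $u$ or of $v$ is a neighbor of $w$). Let $\varphi\colon\mathbb{C}[x_i:i\in V(G)]\to\mathbb{C}[x_i:i\in V(H)]$ be the $\mathbb{C}$-algebra map with $\varphi(x_u)=\varphi(x_v)=x_w$ and $\varphi(x_i)=x_i$ otherwise. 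The key observation is that $\varphi$ carries every generator of the $G$-system to a generator of the $H$-system: $\varphi(x_u^3-1)=\varphi(x_v^3-1)=x_w^3-1$; for a neighbor $a\ne v$ of $u$ (and symmetrically for $v$) one has $\varphi(x_u^2+x_ux_a+x_a^2)=x_w^2+x_wx_a+x_a^2$, the edge polynomial of $\{w,a\}\in E(H)$; and all generators supported away from $u,v$ are fixed. Because $\{u,v\}$ is not an edge of $G$, there is no generator ``at'' $\{u,v\}$, hence no spurious loop polynomial $3x_w^2$ appears. Applying $\varphi$ to a degree-$d$ certificate of $G$ and then collecting the finitely many generators that have become equal --- $x_u^3-1$ with $x_v^3-1$, and the edge polynomials of $\{u,a\}$ and $\{v,a\}$ whenever $a$ is a common neighbor --- produces $1=\sum_{i\in V(H)}\widetilde P_{\{i\}}(x_i^3-1)+\sum_{\{i,j\}\in E(H)}\widetilde P_{\{ij\}}(x_i^2+x_ix_j+x_j^2)$, in which each $\widetilde P$ is a sum of $\varphi$-images of original coefficients. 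Since $\varphi$ only substitutes variables for variables it never raises degree, so every $\widetilde P$ has degree at most $d$ and $H$ has a non-3-colorability certificate of degree at most $d$. As the minimum-degree certificate of $H$ has degree $k$, this forces $d\ge k$; since $d$ was the degree of an arbitrary certificate of $G$, the minimum-degree certificate of $G$ has degree at least $k$, and chaining the inequality along $G_0,\dots,G_m$ completes the proof.

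The step I expect to be the main obstacle is the bookkeeping in part~2: one must check carefully that $\varphi$ maps the ideal of $G$ into the ideal of $H$ with no leftover generators --- this is exactly where the non-adjacency hypothesis is used, ruling out a loop at $w$ whose ``edge polynomial'' $3x_w^2$ is not part of the encoding --- and that merging the coincident generators (which happens when the identified vertices share a neighbor) amounts only to adding coefficient polynomials together and therefore respects the degree bound. The remaining manipulations of the Nullstellensatz identity are routine.
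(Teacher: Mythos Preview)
Your proposal is correct and follows the same approach as the paper: for part~1, both observe that a certificate for $H$ serves verbatim as a certificate for $G$ (zero-padding the missing coefficients), and for part~2, your $\mathbb{C}$-algebra homomorphism $\varphi$ is exactly the paper's variable substitution $x_i=x_j$, just written out with more care about how generators map to generators and where non-adjacency is used. You also rightly notice that the ``minimum-degree'' assertion in part~1 does not follow from padding alone and needs the universal degree-four lower bound to close---a point the paper's one-line proof leaves unaddressed.
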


\begin{proof}

\noindent \textit{Proof of 1:} Since $H$ is a subgraph of $G$~, then any Nullstellensatz certificate
for non-3-colorability of $H$
is also a  Nullstellensatz certificate for non-3-colorability of $G$~.

\noindent \textit{Proof of 2:}
Suppose that $G$ has a  Nullstellensatz certificate for non-3-colorability of degree less than $k$~.
The certificate has the form $1=\sum a_i v_i +\sum b_{\{ij\}} e_{\{ij\}}$ where $v_i = x_i^3 - 1$~,
$e_{\{ij\}}= x_i^2 + x_ix_j + x_j^2$~, and both
$a_i$ and $b_{\{ij\}}$ denote polynomials of degree less than $k$~. Since the certificate is an identity,
the identity must hold for all values of the
variables. In particular, it must hold for every variable substitution $x_i= x_j$ when the nodes
are non-adjacent. In this case, the variable reassignment (pictorially represented in
Figure \ref{fig:odd_wheel_not_proof}) yields a
Nullstellensatz certificate of degree less than $k$ for the transformed graph $H$~.
Note that the parallel edges that may arise
are irrelevant to our considerations (see such examples in Figure \ref{fig:odd_wheel_not_proof}).
\begin{figure}[h]
\begin{center}
% left->bottom->right->top
\includegraphics[width=10cm]{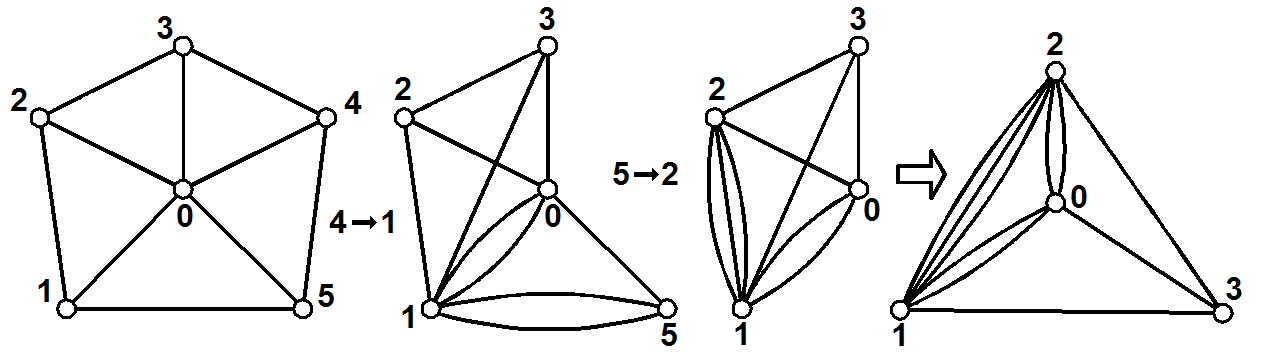}
\end{center}
\caption{Converting $G$ (the 5-odd-wheel) to $H$ (the 3-odd-wheel) via node identifications.}
\label{fig:odd_wheel_not_proof}
\end{figure}
But this is in contradiction with the assumed degree of a minimum-degree certificate for $H$~.
Therefore, any certificate for $G$ must have degree at least $k$~.
\end{proof}

\subsubsection{Cliques, Odd Wheels, and Nullstellensatz Certificates}

\begin{theorem} \label{tcliques}
For $K_n$ with $n \geq 4$~, a minimum-degree  Nullstellensatz certificate for non-3-colorability
has degree exactly four.
\end{theorem}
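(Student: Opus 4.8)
The plan is to prove the two inequalities separately, reusing results already established in the paper. For the lower bound, I would note that $K_n$ with $n\geq 4$ has chromatic number $n\geq 4$ and is therefore not $3$-colorable; hence the theorem asserting that every Nullstellensatz certificate of a non-$3$-colorable graph has degree at least four applies verbatim, and a minimum-degree certificate for $K_n$ has degree $\geq 4$.

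For the upper bound I would exhibit a certificate of degree exactly four. The cleanest route is to observe that $K_4$ is a subgraph of $K_n$ for every $n\geq 4$, and that we already possess an explicit degree-four Nullstellensatz certificate of non-$3$-colorability for $K_4$ (the one displayed in the worked example of Section \ref{preliminariesNULL}); by the lower-bound theorem this certificate is in fact of minimum degree for $K_4$. Now I would invoke Lemma \ref{lem_node_contraction}, part 1: since $K_4$ is a subgraph of $K_n$ and $K_4$ has a minimum-degree non-$3$-colorability certificate of degree four, so does $K_n$. Concretely, the $K_4$ certificate uses only the polynomials $x_i^3-1$ for $i$ in a fixed $4$-clique and $x_i^2+x_ix_j+x_j^2$ for the edges of that clique, all of which occur in the polynomial system for $K_n$; padding the remaining coefficient polynomials with zeros turns it into a certificate for $K_n$ of the same degree. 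Combining this with the lower bound gives minimum degree exactly four.

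I do not expect a genuine obstacle here: the two substantive ingredients — the universal lower bound of four, and the explicit degree-four certificate of $K_4$ — are already in hand, so the argument is essentially an assembly via Lemma \ref{lem_node_contraction}. The only point that deserves a word of care is the assertion that the $K_4$ certificate is of minimum degree, which one does not read off its shape but which follows immediately from the degree-$\geq 4$ theorem. An alternative, more self-contained route would be to write down an explicit degree-four certificate for general $K_n$ directly, guided by the structure of the $K_4$ certificate (whose denominators of $9=3^2$ reflect the cube-root-of-unity encoding); this would be considerably more laborious and, given Lemma \ref{lem_node_contraction}, unnecessary, so I would only pursue it if a uniform closed-form certificate were desired.
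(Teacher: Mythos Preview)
Your proposal is correct and follows essentially the same route as the paper: the paper displays the explicit degree-four certificate for $K_4$, observes that $K_4$ is a subgraph of every $K_n$ with $n\geq 4$, and then invokes Lemma~\ref{lem_node_contraction}(1) to conclude. If anything, you are slightly more explicit than the paper in separating out the lower bound (via the degree-$\geq 4$ theorem) from the upper bound, whereas the paper packages both into its appeal to Lemma~\ref{lem_node_contraction}(1).
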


\begin{proof} It is easy to see that $K_4$ is a subgraph of $K_5$~, which is
a subgraph of $K_6$~, and so on. The decision problem of whether $K_4$ is 3-colorable can be
encoded by the system of equations
\[
\begin{array}{llllll}
&x_1^3 - 1 = 0~, &x_2^3 - 1 =0~, &x_1^2 + x_1x_2 + x_2^2 = 0~, &x_1^2 + x_1x_3 + x_3^2 = 0~, &x_1^2 + x_1x_4 + x_4^2 = 0~,\\
&x_3^3 - 1 = 0~, &x_4^3 - 1 = 0~, & x_2^2 + x_2x_3 + x_3^2 = 0~, &x_2^2 + x_2x_4 +x_4^2 = 0~, &x_3^2 + x_3x_4 + x_4^2 = 0~,
\end{array}
\]
with one equation per vertex and one equation per edge. Using the linear-algebra heuristic described
at the beginning of Section \ref{preliminariesNULL}, we find that $K_4$ has a degree-4 Nullstellensatz certificate
for non-3-colorablility:
\footnotesize
\begin{align*}
1 &= (-x_1^3-1)(x_1^3-1)+\bigg(\frac{4}{9}x_4^4-\frac{5}{9}x_4^3x_2-\frac{2}{9}x_4^3x_3-\frac{4}{9}x_4^3x_1+\frac{2}{9}x_4^2x_2x_1+\frac{2}{9}x_4^2x_3x_1\bigg)(x_4^2+x_2x_4+x_2^2)\\
&\phantom{=} + \bigg(\frac{1}{9}x_4^4+\frac{2}{9}x_4^3x_2-\frac{1}{9}x_4^3x_1-\frac{2}{9}x_4^2x_2x_1 \bigg)(x_2^2+x_3x_2+x_3^2)+ \bigg(\frac{2}{9}x_4^4+\frac{1}{9}x_4^3x_2+\frac{1}{9}x_4^3x_1+\frac{2}{9}x_4^2x_2x_1 \bigg)(x_4^2+x_3x_4+x_3^2) \\
&\phantom{=} + \bigg(-\frac{2}{3}x_4^4+x_4^3x_1-x_4x_1^3+x_1^4\bigg)(x_4^2+x_1x_4+x_1^2)+\frac{1}{3}x_4^3x_2(x_2^2+x_1x_2+x_1^2)+\bigg(-\frac{1}{3}x_4^4-\frac{1}{3}x_4^3x_2\bigg)(x_3^2+x_1x_3+x_1^2).
\end{align*}
\normalsize
Because $K_4$ has a degree-4 Nullstellensatz certificate
as shown above, $K_n$ for $n \geq 4$ also has a degree-4 Nullstellensatz
certificate via Lemma \ref{lem_node_contraction} (1). \end{proof}

The odd-wheels consist of an odd-cycle rim, with a center vertex
connected to all other vertices. It is rather easy to see that no odd-wheel is 3-colorable.
It is natural to ask about the degree of a
minimum-degree  Nullstellensatz certificate for non-3-colorablility.

\begin{theorem} \label{towheels}
Given any odd-wheel, the degree of a minimum-degree
Nullstellensatz certificate for non-3-colorability is four.
\end{theorem}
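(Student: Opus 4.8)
The lower bound is already in hand: by the theorem proved just above, every Nullstellensatz certificate of a non-3-colorable graph has degree at least four, and every odd wheel is non-3-colorable (if the hub gets one of three colors, the rim must be properly $2$-colored, which an odd cycle forbids). So the only thing left is to show that \emph{every} odd wheel admits a certificate of degree \emph{at most} four.

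The plan is to construct such a certificate explicitly by mimicking that elementary non-colorability argument algebraically. Write $x_0$ for the hub variable and $x_1,\dots,x_{2k+1}$ for the rim variables (indices cyclic), and let the generators be $v_0=x_0^3-1$, $v_i=x_i^3-1$, the spoke polynomials $g_i=x_0^2+x_0x_i+x_i^2$, and the rim polynomials $r_i=x_i^2+x_ix_{i+1}+x_{i+1}^2$. The computations driving everything are the two identities $(x_i-x_{i+1})\,r_i=v_i-v_{i+1}$ and, crucially, $(x_i-x_{i+1})\,(x_0+x_i+x_{i+1})=g_i-g_{i+1}$. On the common zero set of $v_0,v_i,v_{i+1},g_i,g_{i+1},r_i$ one has $x_i\neq x_{i+1}$ (that is what $r_i=0$ enforces, given the cubes), so these identities certify that $x_0+x_i+x_{i+1}$ lies in the ideal; this linear relation is the algebraic incarnation of ``the $i$-th rim edge is properly $2$-colored relative to the hub''. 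Summing these linear relations around the rim with alternating signs — exactly the step that makes an odd cycle fail to be $2$-colorable — should telescope, after the reductions $x_i^3\to 1$, to a nonzero constant; rescaling by that constant yields the certificate.

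The delicate point, and the step I expect to be the main obstacle, is the degree: carrying out this composition naively produces coefficients of degree up to six (a degree-two odd-cycle identity composed with the degree-three quantities $g_i-g_{i+1}$ and $v_i-v_{i+1}$), so the real work is to organize the linear combination and reuse $v_0,v_i\in I$ so that, after all cancellations, every coefficient has degree at most four; matching the lower bound then gives degree exactly four. For the smallest wheels $W_5, W_7,\dots$ this bookkeeping is confirmed directly by the linear-algebra procedure of Section~\ref{preliminariesNULL}, which both certifies feasibility at degree four and shows infeasibility at degree three. An alternative route would be induction on the rim length with base case $W_3=K_4$ (for which a degree-four certificate was exhibited above), producing a certificate for $W_{2k+3}$ from one for $W_{2k+1}$ by local surgery near two consecutive rim vertices; but since consecutive odd wheels are not related by subgraph inclusion, Lemma~\ref{lem_node_contraction} does not hand this over for free, and rewriting the ``closing'' rim-edge term of the smaller certificate in terms of the larger wheel's generators runs into essentially the same degree-control difficulty in another form.
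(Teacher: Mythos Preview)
Your lower bound is fine, and your two identities $(x_i-x_{i+1})r_i=v_i-v_{i+1}$ and $(x_i-x_{i+1})(x_0+x_i+x_{i+1})=g_i-g_{i+1}$ are correct and natural. But the proposal as written has a genuine gap: you never actually produce a degree-four certificate for a \emph{general} odd wheel. You correctly identify that the telescoping composition gives degree-six coefficients and say that ``the real work is to organize the linear combination \dots\ so that every coefficient has degree at most four,'' but you do not carry out that work. Confirming finitely many small cases $W_5,W_7,\dots$ by the linear-algebra routine is not a proof for all $W_{2k+1}$, and no uniform mechanism is offered to push the reduction through. So the upper bound --- the entire content of the theorem beyond the general degree-four lower bound --- is missing.

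Ironically, the route that does work is the one you mention and set aside as your ``alternative'': induction on the rim length, passing from $W_{2k+1}$ to $W_{2k+3}$ by local surgery. The paper does exactly this. Starting from an explicit degree-four certificate for the $3$-wheel, one exhibits (by the same linear-algebra search) a single \emph{syzygy} of degree at most four expressing $\alpha_{\{13\}}e_{\{13\}}$ as $\alpha_{\{13\}}e_{\{15\}}$ plus a combination of the new spoke and rim polynomials of the $5$-wheel, all with degree-$\le 4$ coefficients. The step you are missing, and the reason the induction does not hit the degree-control wall you anticipated, is this: in the base certificate the coefficient $\alpha_{\{13\}}$ happens to lie in $\mathbb{R}[x_0,x_1,x_2]$ --- it does not involve $x_3$. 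Because the \emph{same} $\alpha_{\{13\}}$ reappears as the coefficient of the new closing edge $e_{\{15\}}$, the identical syzygy (after the obvious variable relabeling) can be applied again to go from $W_{2k+1}$ to $W_{2k+3}$, and the degree never rises. One computer-found identity thus suffices for all odd wheels; your telescoping scheme, by contrast, would need a fresh degree-reduction argument at every size.
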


\begin{proof}
 First, we will prove that there exists a certificate of degree four for the $n$-th odd-wheel. We begin by displaying a degree-4 certificate for the 3-odd-wheel:
\begin{align}
1 &= \bigg(\frac{4}{9}x_1^4-\frac{5}{9}x_1^3x_2-\frac{2}{9}x_1^3x_3-\frac{4}{9}x_1^3x_0+\frac{2}{9}x_1^2x_2x_0+\frac{2}{9}x_1^2x_3x_0\bigg)(x_1^2+x_2x_1+x_2^2)+\nonumber \\
&\hspace{15pt}\bigg(\frac{1}{9}x_1^4+\frac{2}{9}x_1^3x_2-\frac{1}{9}x_1^3x_0-\frac{2}{9}x_1^2x_2x_0\bigg)(x_2^2+x_3x_2+x_3^2)+ \frac{1}{3}x_1^3x_2(x_2^2+x_0x_2+x_0^2) + \nonumber \\
& \hspace{15pt} \bigg(\frac{2}{9}x_1^4+\frac{1}{9}x_1^3x_2+\frac{1}{9}x_1^3x_0+\frac{2}{9}x_1^2x_2x_0\bigg)(x_1^2+x_3x_1+x_3^2) + \frac{1}{3}x_1^4(x_1^2+x_0x_1+x_0^2)+\nonumber \\
& \hspace{15pt}\bigg (-\frac{1}{3}x_1^4-\frac{1}{3}x_1^3x_2\bigg)(x_3^2+x_0x_3+x_0^2) + (-x_1^3-1)(x_1^3-1). \label{eq:cert_3}
\end{align}
For now, we denote the non-3-colorability certificate for the 3-odd-wheel as follows:
\begin{align*}
1 &= \alpha_{\{12\}}e_{\{12\}} + \alpha_{\{23\}}e_{\{23\}} + \alpha_{\{20\}}e_{\{20\}} + \alpha_{\{13\}}e_{\{13\}} +\alpha_{\{10\}}e_{\{10\}} + \alpha_{\{30\}}e_{\{30\}} + \alpha_1v_1,
\end{align*}
where $v_1 = x_1^3 - 1$~, and $e_{\{ij\}} = x_i^2 + x_ix_j + x_j^2$ and $\alpha_1$ and $\alpha_{\{ij\}}$ denote polynomials of degree four in $\mathbb{R}[x_0,x_1,x_2,x_3]$~.
\begin{figure}[h]
\begin{center}
% left->bottom->right->top
\includegraphics[width=7cm]{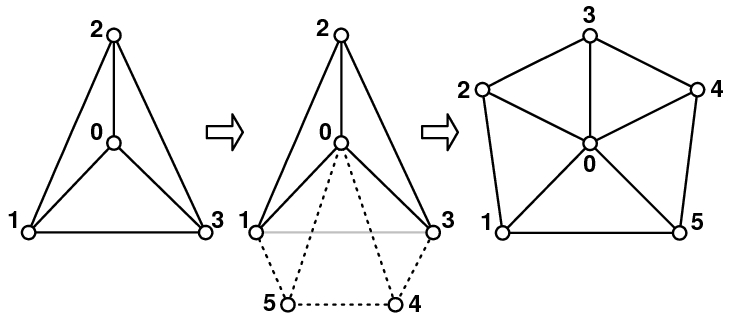}
\end{center}
\caption{Here we show the evolution of the 3-odd-wheel to the 5-odd-wheel. This can be extended from the $n$-th odd wheel to the $(n+2)$-th odd wheel.}
\label{fig:odd_wheel_proof}
\end{figure}
In Figure \ref{fig:odd_wheel_proof}, we can see that the topological
difference between the 3-odd-wheel and the 5-odd-wheel is that the
edge $(1,3)$ is lost, and the vertices $4,5$ and associated edges
$(3,4), (4,5), (5,1), (0,4)$ and $(0,5)$ are gained. We can exhibit an algebraic
relation (or syzygy) as follows:
\begin{align}
\alpha_{\{13\}}e_{\{13\}} &= \alpha_{\{13\}}e_{\{15\}} + \beta_{\{34\}}e_{\{34\}} + \beta_{\{45\}}e_{\{45\}} + \beta_{\{01\}}e_{\{01\}} + \beta_{\{03\}}e_{\{03\}} + \beta_{\{04\}}e_{\{04\}} \nonumber\\
& \hspace{10pt} + \beta_{\{05\}}e_{\{05\}}~, \label{eq:odd_wheel_syzygy}
\end{align}
where $\beta_{\{ij\}} \in \mathbb{R}[x_0,x_1,x_2,x_3,x_4,x_5]$ and $\deg(\beta_{\{ij\}}) \leq 4$~. Note that the coefficients for $e_{\{13\}}$ and $e_{\{15\}}$ are the same. Recall that
\begin{align*}
\alpha_{\{13\}} &= \frac{2}{9}x_1^4+\frac{1}{9}x_1^3x_2+\frac{1}{9}x_1^3x_0+\frac{2}{9}x_1^2x_2x_0~.
\end{align*}
Since $\alpha_{\{13\}}$ does not contain the variable $x_3$~,
$\alpha_{\{13\}} \in \mathbb{R}[x_0,x_1,x_2]$ and \textit{not} in
$\mathbb{R}[x_0,x_1,x_2,x_3]$~. Therefore, because there exists a degree-4
certificate for the 3-odd-wheel, we can simply use the above
syzygy (Eq. \ref{eq:odd_wheel_syzygy}) to substitute for the
$\alpha_{\{13\}}e_{\{13\}}$ term in the degree-4 3-odd-wheel
certificate (Eq. \ref{eq:cert_3}). The resulting polynomial is a
degree-4 certificate for the 5-odd-wheel where the coefficient for $e_{\{15\}}$ in the
5-odd-wheel certificate is exactly the same as the coefficient for $e_{\{13\}}$ in the
3-odd-wheel certificate (both coefficients are $\alpha_{\{13\}}$). Thus, we can again
use the syzygy of Eq. \ref{eq:odd_wheel_syzygy} (with the variable
substitutions of $x_4 \rightarrow x_6$~, $x_5 \rightarrow x_7$~, and $x_3 \rightarrow x_5$), and
substitute for the $\alpha_{\{13\}}e_{\{15\}}$ term in the 5-odd-wheel
certificate to obtain a degree-4 7-odd-wheel certificate. Thus, by
induction, we obtain degree-4 certificates for \textit{all}
odd-wheels. It remains for us to show that such a syzygy exists.

The special syzygy was found via the linear algebra heuristic described at the beginning of
Section \ref{preliminariesNULL} and is listed below. Note that the coefficients for $e_{\{13\}}$
and $e_{\{15\}}$ are indeed identical and are equal to the coefficient for $e_{\{13\}}$ in the
3-odd-wheel certificate we presented earlier (Eq. \ref{eq:cert_3}).
\footnotesize
\begin{align*}
0 &= - \underbrace{\bigg(\frac{2}{9}x_1^4+\frac{1}{9}x_1^3x_2+\frac{1}{9}x_1^3x_0+\frac{2}{9}x_1^2x_2x_0 \bigg)}_{\alpha_{\{13\}}}\underbrace{(x_1^2+x_3x_1+x_3^2)}_{e_{\{13\}}}+ \underbrace{\bigg(\frac{2}{9}x_1^4+\frac{1}{9}x_1^3x_2+\frac{1}{9}x_1^3x_0+\frac{2}{9}x_1^2x_2x_0 \bigg)}_{\alpha_{\{13\}}} \underbrace{(x_1^2+x_5x_1+x_5^2)}_{e_{\{15\}}} \\
& \hspace{10pt} + \bigg(\frac{2}{9}x_1^3x_0+\frac{1}{9}x_1x_2x_0x_5-\frac{1}{9}x_1x_2x_4x_5-\frac{1}{9}x_1x_3x_0^2-\frac{2}{9}x_1x_3x_0x_4-\frac{2}{9}x_2x_0^3-\frac{1}{9}x_2x_0^2x_4+\frac{1}{9}x_4^4 \bigg) \underbrace{(x_3^2+x_3x_4+x_4^2)}_{e_{\{34\}}} \\
 & \hspace{10pt} + \bigg(-\frac{2}{9}x_1^4-\frac{2}{9}x_1^2x_2x_0-\frac{1}{9}x_1^2x_2x_4+\frac{1}{9}x_1^2x_0x_4-\frac{1}{9}x_1x_2x_3x_0+\frac{1}{9}x_1x_2x_3x_4-\frac{1}{9}x_1x_2x_0^2+\frac{1}{9}x_1x_2x_4^2-\frac{2}{9}x_0^4 \\
& \hspace{28pt} + \frac{1}{9}x_0^3x_4-\frac{1}{9}x_4^4+\frac{1}{9}x_4^3x_5-\frac{1}{9}x_4x_5^3\bigg)\underbrace{(x_4^2+x_4x_5 +x_5^2)}_{e_{\{45\}}} \\
 & \hspace{10pt} + \bigg(-\frac{1}{3}x_1x_3x_0^2-\frac{2}{9}x_3x_0x_4^2-\frac{5}{9}x_1x_3^2x_0-\frac{1}{3}x_1^2x_3x_0+\frac{2}{9}x_1^2x_4x_5+\frac{2}{9}x_0^2x_4x_5-\frac{1}{9}x_1x_4x_5^2+\frac{2}{9}x_3^2x_0x_4+\frac{2}{9}x_2x_3x_4^2 \\
 & \hspace{28pt} + \frac{1}{9}x_1^2x_2x_3-\frac{1}{9}x_1^2x_2x_5+\frac{2}{9}x_1^3x_3-\frac{2}{9}x_1^3x_5+\frac{1}{9}x_1^2x_0x_5-\frac{2}{9}x_1^2x_0^2 +\frac{2}{9}x_1^2x_4^2-\frac{4}{9}x_1x_3^2x_4-\frac{2}{3}x_1x_3x_0x_4-\frac{4}{9}x_1x_0x_4x_5 \\
& \hspace{28pt} -\frac{5}{9}x_1x_0^2x_4-\frac{4}{9}x_1x_0x_4^2-\frac{1}{9}x_1x_0x_5^2-\frac{1}{9}x_1x_4^2x_5-\frac{2}{9}x_1x_0^3+\frac{2}{9}x_2x_3^2x_0+\frac{1}{9}x_2x_3^2x_4-\frac{1}{9}x_2x_3x_5^2+\frac{2}{9}x_2x_0x_4^2+\frac{1}{3}x_2x_3x_0x_4 \\
& \hspace{28pt} -\frac{1}{9}x_2x_3x_0x_5+\frac{1}{9}x_2x_4^3-\frac{4}{9}x_3^3x_0-\frac{1}{3}x_3^4-\frac{1}{9}x_3^3x_4+\frac{2}{9}x_3^2x_4^2+\frac{2}{9}x_0^2x_5^2-\frac{1}{9}x_0x_4^3 \bigg)\underbrace{(x_0^2+x_0x_1+x_1^2)}_{e_{\{01\}}} \\
& \hspace{10pt} + \bigg(\frac{2}{9}x_1^4+\frac{1}{9}x_1^3x_2+\frac{4}{9}x_1^3x_0+\frac{4}{9}x_1^3x_4-\frac{1}{9}x_1^2x_2x_4+\frac{1}{3}x_1^2x_3^2+\frac{1}{9}x_1^2x_3x_0+\frac{1}{9}x_1^2x_3x_4+\frac{5}{9}x_1^2x_0^2 +\frac{5}{9}x_1^2x_0x_4+\frac{2}{9}x_1^2x_4^2 \\
& \hspace{28pt} -\frac{2}{9}x_1x_2x_0^2-\frac{1}{9}x_1x_2x_0x_4-\frac{1}{9}x_1x_2x_0x_5+\frac{1}{9}x_1x_2x_4x_5+\frac{1}{3}x_1x_3^2x_0+\frac{2}{9}x_1x_3x_0^2+\frac{1}{3}x_1x_3x_0x_4+\frac{1}{3}x_3^2x_0^2 \\
 & \hspace{28pt} -\frac{1}{9}x_3x_0^3-\frac{1}{9}x_3x_0^2x_4-\frac{2}{9}x_3x_0x_4^2-\frac{2}{9}x_0^4-\frac{2}{9}x_0^3x_4 \bigg)\underbrace{(x_0^2+x_0x_3+x_3^2)}_{e_{\{03\}}} \\
& \hspace{10pt} + \bigg(\frac{1}{9}x_1^3x_5-\frac{2}{9}x_1^2x_2x_3+\frac{1}{9}x_1^2x_2x_5-\frac{4}{9}x_1^2x_3^2-\frac{1}{9}x_1x_2x_3x_4+\frac{1}{9}x_1x_2x_0^2-\frac{1}{9}x_1x_2x_4^2+\frac{1}{9}x_1x_3x_0^2+\frac{2}{9}x_1x_3x_0x_4 \\
& \hspace{28pt} +\frac{1}{3}x_1x_0^3 +\frac{1}{9}x_1x_0^2x_4+\frac{1}{9}x_1x_0^2x_5+\frac{1}{9}x_2x_3x_0x_5+\frac{1}{9}x_2x_3x_5^2+\frac{2}{9}x_3^3x_0+\frac{1}{9}x_3^2x_0x_4-\frac{1}{9}x_3^2x_4^2+\frac{1}{3}x_3x_0^3 \\
& \hspace{28pt}  +\frac{1}{9}x_3x_0x_4^2-\frac{1}{9}x_3x_4^3+\frac{2}{9}x_0^4 \bigg)\underbrace{(x_0^2+x_0x_4+x_4^2)}_{e_{\{04\}}} \\
 & \hspace{10pt} +  \bigg(-\frac{1}{9}x_1^3x_2+\frac{1}{9}x_1^3x_4+\frac{1}{9}x_1^2x_2x_3+\frac{1}{9}x_1^2x_2x_4-\frac{1}{9}x_1^2x_0^2+\frac{2}{9}x_1x_2x_3x_0-\frac{1}{9}x_1x_2x_3x_4+\frac{1}{9}x_1x_2x_0^2-\frac{1}{9}x_1x_2x_4^2 \\
& \hspace{28pt} -\frac{1}{9}x_1x_0^3+\frac{1}{9}x_1x_0^2x_4-\frac{1}{9}x_2x_3x_0x_4-\frac{1}{9}x_2x_3x_4^2-\frac{1}{9}x_0x_4^2x_5-\frac{1}{9}x_0x_4x_5^2+\frac{1}{9}x_4^2x_5^2+\frac{1}{9}x_4x_5^3 \bigg)\underbrace{(x_0^2+x_0x_5+x_5^2)}_{e_{\{05\}}}~.
\end{align*}
\normalsize
\end{proof}

Finally, the reader may easily observe that Theorem \ref{degHN}, Part 2 follows directly
from Theorem \ref{tcliques}, Theorem \ref{towheels}, and Lemma \ref{lem_node_contraction}.

\subsection{Nullstellensatz Certificates for Other Non-3-Colorable Graphs}
With the aid of a computer, we searched many families of
non-3-colorable graphs, hoping to find explicit examples with
growth in the certificate degree. Every graph we have investigated so far
has a Nullstellensatz certificate of degree four. This suggests that
examples with degree growth are rare for graphs with few vertices,
and that many graphs have short proofs of non-3-colorability. In
Figure \ref{fig_uc_jin_grotzch}, we describe the Jin and Gr\"otzch
graphs, and in Figure \ref{fig_flowers}, we describe the ``Flower"
family. Kneser graphs are described in most graph theory books. In
Table \ref{tbl_gc_exp_flwr_jin_kneser}, we present a sampling of the
many graphs we tried during our computational experiments. Note that
we often used our probabilistic linear algebra algorithm, selecting
$p = .4$ as a likely threshold for feasibility.

\begin{table}[h]
\begin{center}
\begin{tabular}{|c|c|c|c|c|c|c|}
\hline
\em Graph & \em vertices & \em edges & \em row & \em col & \em p & \em $\deg$\\
\hline
\text{flower} 8 & 16 & 32 & 51819 & 49516 & .4 & 4 \\
\hline
\text{flower} 10 & 20 & 40 & 178571 & 362705 & 1 & 4 \\
\hline
\text{flower} 11 & 22 & 44 & 278737 & 278844 & .5 & 4 \\
\hline
\text{flower} 13 & 26 & 52 & 629666 & 495051 & .4 & 4 \\
\hline
\text{flower} 14 & 28 & 56 & 923580 & 705536 & .4 & 4 \\
\hline
\text{flower} 16 & 32 & 64 & 1979584 & 1674379 & .4 & 4 \\
\hline
\text{flower} 17 & 34 & 68 & 2719979 & 2246535 & .4 & 4 \\
\hline
\text{flower} 19 & 38 & 76 & 4862753 & 3850300 & .5 & 4 \\
\hline
\text{kneser-}(6,2) & 15 & 45 & 39059 & 68811 & .5 & 4 \\
\hline
\text{kneser-}(7,2) & 21 & 105 & 230861 & 558484 & .5 & 4 \\
\hline
\text{kneser-}(8,2) & 28 & 210 & 1107881 & 3307971 & .5 & 4 \\
\hline
\text{kneser-}(9,2) & 36 & 378 & 1107955 & 3304966 & .5 & 4 \\
\hline
\text{kneser-}(10,2) & 45 & 630 & 15,567,791 & 36,785,283 & .5 & 4 \\
\hline
\text{jin graph} & 12 & 24 & 12168 & 13150 & .4 & 4 \\
\hline
\text{Gr\"otzsch} & 11 & 20 & 7903 & 8109 & .4 & 4 \\
\hline
$G + \{(3,4)\}$ & 12 & 24 & 12,257 & 13,091 & .4 & 4 \\
\hline
$G + \{(7,12)\}$ & 12 & 24 & 12,201 & 13,085 & .4 & 4 \\
\hline
$G + \{(1,8)\}$ & 12 & 24 & 12,180 & 13,124  & .4 & 4 \\
\hline
$G + \{(3,4),(12,7)\}$ & 12 & 25 & 12,286 & 13,804 & .4 & 4 \\
\hline
\end{tabular}
\caption{Experimental investigations for Flowers, Kneser, the Jin graph and the Gr\"otzch graph.
Here $G$ denotes the uniquely-colorable graph displayed in Figure \ref{fig_uc_jin_grotzch}.}
\label{tbl_gc_exp_flwr_jin_kneser}
\end{center}
\end{table}
\begin{figure}[h]
\begin{center}
% left->bottom->right->top
\includegraphics[width=10cm, trim=0 0 0 0]{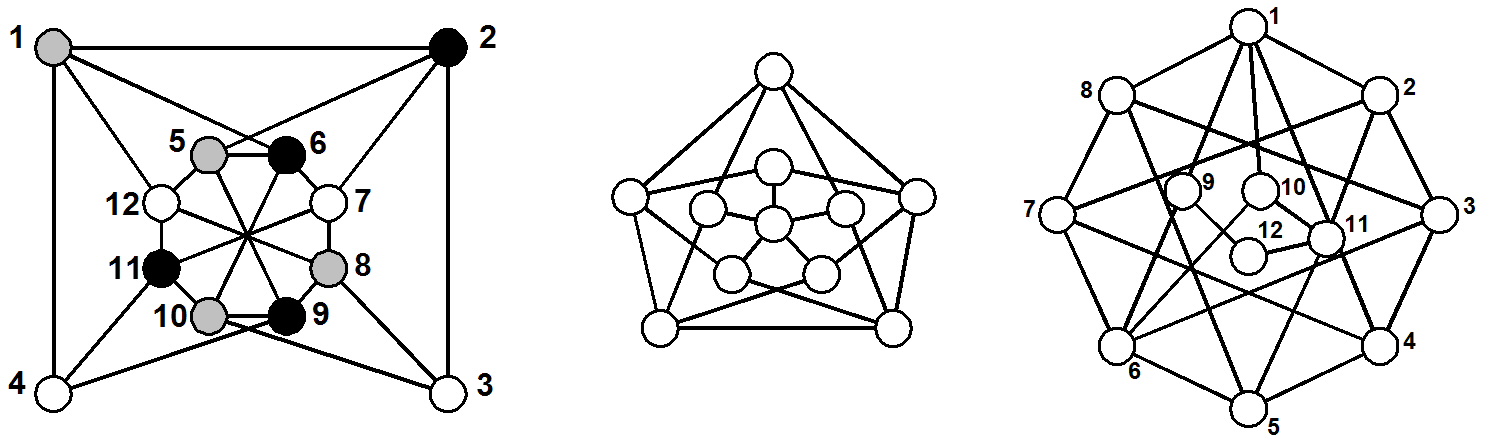}
\end{center}
\caption{These graphs (from left to right) are (1) a uniquely 3-colorable graph, labeled with its unique
3-coloring \cite{chao_chen_uc}, (2) the Gr\"otzsch graph, and (3) the Jin graph.}
\label{fig_uc_jin_grotzch}
\end{figure}
A \textit{uniquely 3-colorable} graph is a graph that can be colored with three colors in only one way,
up to permutation of the color labels. Figure \ref{fig_uc_jin_grotzch} displays a uniquely 3-colorable
triangle-free graph \cite{chao_chen_uc}. Since the graph is uniquely 3-colorable, the addition of a single
edge between two similarly-colored vertices will result in a new non-3-colorable graph.
Table \ref{tbl_gc_exp_flwr_jin_kneser} also details these experiments. Finally, we investigated
all non-3-colorable graphs on six vertices or less: every one has a Nullstellensatz certificate of degree four.
\begin{figure}[h]
\begin{center}
% left->bottom->right->top
\includegraphics[width=10cm, trim=0 0 0 0]{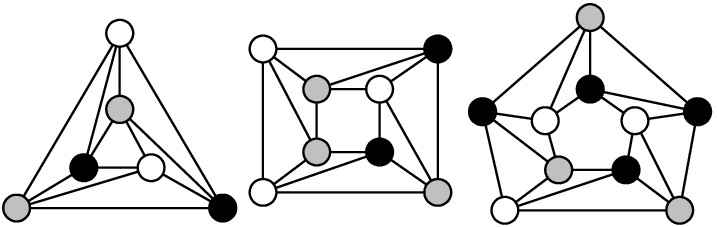}
\caption{$3,4$ and 5 flowers (left to right). Note that the 3-flower is 3-colorable, whereas the 4 and 5
flowers are non-3-colorable. It is easy to see that only flowers that are multiples of 3 are 3-colorable.}
\label{fig_flowers}
\end{center}
\end{figure}

%\section*{Acknowledgement} The authors would like to acknowledge the
%support of an IBM Open Collaborative Research Award to Jes\'us De
%Loera at UC Davis which has partially supported the work reported
%here. This work was also partially supported by NSF grant DMS-0608785.

\end{document}